\documentclass[11pt]{amsart}
\usepackage[a4paper]{geometry}                                              
\usepackage{fourier,mathtools}                                              
\usepackage[bb=ams, cal=cm, scr=boondox, frak=euler]{mathalpha}             
\let\amsmathbb\mathbb
\AtBeginDocument{%
    \let\mathbb\relax
    \newcommand{\mathbb}[1]{\amsmathbb{#1}}
}
\mathtoolsset{mathic=true}

\usepackage{amsmath,amsthm,amsfonts,amssymb}                                
\usepackage{mathrsfs}                                                       
\usepackage{esint}                                                          
\usepackage[all]{xy}                                                        
\xyoption{rotate}
\usepackage{tikz-cd}                                                        
\usepackage[colorlinks=true,linkcolor=magenta,citecolor=blue]{hyperref}     
\usepackage[capitalise]{cleveref}                                           
\usepackage{colonequals}                                                    

\newcommand{\bQ}{{\mathbb{Q}}} \newcommand{\bR}{{\mathbb{R}}}

 \newcommand{\bZ}{{\mathbb{Z}}}

\DeclareMathOperator{\Stab}{Stab}

\DeclareMathOperator{\Tr}{Tr}

\newcommand{\tors}{{\mathrm{tors}}}
\newcommand{\tr}{{\mathrm{tr}}}

\newcommand{\Gm}{\mathbb{G}_m}
\newcommand{\hh}{\widehat h}

\usepackage{enumitem}

\theoremstyle{plain}

\newtheorem{theorem}{Theorem}[section]           
\newtheorem{corollary}[theorem]{Corollary}
\newtheorem{lemma}[theorem]{Lemma}
\newtheorem{proposition}[theorem]{Proposition}

\theoremstyle{definition}

\newtheorem{example}[theorem]{Example}

\title{Mordell--Lang plus Geometric Bogomolov}
\date{29 September 2026}
\author{Long Liu}
\address{Institute for Theoretical Sciences, Westlake University, Hangzhou, Zhejiang, 310030, China}
\email{liulong@westlake.edu.cn}    
\subjclass[2020]{Primary 14G40; Secondary 11G50, 11J25, 14K12}
\keywords{Mordell--Lang conjecture, geometric Bogomolov conjecture, semi-abelian varieties, canonical heights, function fields, Vojta inequality}

\begin{document}
\begin{abstract}
    We prove a geometric version of Poonen's "Mordell--Lang plus Bogomolov" theorem for semi-abelian varieties in characteristic zero. This is a generalization of the Mordell--Lang conjecture and the geometric Bogomolov conjecture.
\end{abstract}

\maketitle

\setcounter{tocdepth}{1}
\tableofcontents
\raggedbottom

\section{Introduction}

We prove a geometric version of Poonen's "Mordell--Lang plus Bogomolov" theorem for semi-abelian varieties in characteristic zero. This is a generalization of the Mordell--Lang conjecture and the geometric Bogomolov conjecture. More precisely, we study subvarieties of a semi-abelian variety over a function field that contain a Zariski dense set of points arbitrarily close in canonical height to a finite-rank division group. We characterize them, after passage to the stabilizer quotient, as images of constant subvarieties translated by points of the group. A geometric Vojta inequality supplies the height bound needed for this classification.

\subsection{Semi-abelian varieties and heights}

Let $k$ be an algebraically closed field of characteristic zero, and let $K/k$ be a finitely generated regular extension with $\operatorname{trdeg}(K/k)\geq1$. Fix an algebraic closure $\overline K$. Fix a polarization $(\mathcal B,\mathcal H)$ of $K/k$, where $\mathcal B$ is a normal projective $k$-variety with function field $K$ and $\mathcal H$ is an ample line bundle on $\mathcal B$. Heights are normalized by degrees over $K$, so they are unchanged when the field of definition of a point is enlarged.

Let $G$ be a semi-abelian variety over $K$, given by an exact sequence
\[
    1\longrightarrow T\longrightarrow G\xrightarrow{\pi}A\longrightarrow0,
\]
where $T$ is a torus and $A$ is an abelian variety. We use additive notation for the group law and write $[n]$ for multiplication by $n\in\bZ$.

We use the height conventions of Luo--Yu \cite[\S1.1 and \S2.5]{LY2025Bogomolov}. Put $t=\dim T$. After a finite extension of $K$, choose a splitting $T\simeq\Gm^t$ and the associated standard equivariant compactification $\overline G$. The projection extends to $\overline\pi:\overline G\to A$. Let $D$ be the sum of the zero and infinity divisors in the projective-line factors of $\overline G$, and put $M=\mathcal O_{\overline G}(D)$. Choose a symmetric ample line bundle $N$ on $A$, and set
\[
    L=M+\overline\pi^*N.
\]
Here and below, tensor products of line bundles are written additively. If $t=0$, take $\overline G=A$ and $M=0$. The canonical height associated with these data is
\[
    \hh_L(x)=\hh_M(x)+\hh_N(\pi(x)),\qquad x\in G(\overline K),
\]
where $\hh_M$ is the canonical height associated with the boundary line bundle $M$ and $\hh_N$ is the N\'eron--Tate height associated with $N$ on $A$. Both summands are nonnegative. Under multiplication by a positive integer, $\hh_M$ is homogeneous of degree one and $\hh_N$ is homogeneous of degree two.

We fix these data throughout the paper. For $\epsilon>0$, put
\[
    B_\epsilon(G)=\{x\in G(\overline K):\hh_L(x)<\epsilon\}.
\]
The set $B_\epsilon(G)$ depends on $L$. Heights from two standard choices are comparable by positive multiplicative constants; consequently, the condition below involving every $\epsilon>0$ is independent of the choice. The comparison is recalled in Section~\ref{sec:heights}. We say that a subvariety contains a dense set of small points if its intersection with $B_\epsilon(G)$ is dense for every $\epsilon>0$.

Let $\Gamma_0\subset G(\overline K)$ be a finitely generated subgroup, and put
\[
    \Gamma=\Gamma_0^{\mathrm{div}}:=\{x\in G(\overline K):[n]x\in\Gamma_0\text{ for some }n\geq1\},
    \qquad
    \Gamma_\epsilon=\Gamma+B_\epsilon(G).
\]
Following Poonen \cite[\S1]{Poo99}, we call $\Gamma$ the division group of $\Gamma_0$ and may visualize $\Gamma_\epsilon$ as a fattening of $\Gamma$, a "slab" in the topology on $G(\overline{K})$. For every $n\geq1$, it satisfies $[n]^{-1}\Gamma=\Gamma$, where the inverse image is taken in $G(\overline K)$. In particular, $\Gamma$ contains all torsion points. A subgroup $\Lambda$ has finite rank if $\dim_{\bQ}(\Lambda\otimes_{\bZ}\bQ)<\infty$. The groups $\Gamma_0^{\mathrm{div}}$ are exactly the finite-rank subgroups $\Lambda$ satisfying $[n]^{-1}\Lambda=\Lambda$ for every $n\geq1$.

\subsection{The main theorem}

Poonen \cite[Theorem~8 and Corollary~9]{Poo99} proved Mordell--Lang plus Bogomolov over number fields for division groups on almost split semi-abelian varieties, that is, varieties isogenous to a product of an abelian variety and a torus. Zhang \cite[Theorem~1.2]{Zha00} independently proved the abelian case by equidistribution of almost division points. R\'emond \cite[Theorems~1.1 and~1.2]{Rem03} treated arbitrary semi-abelian varieties over $\overline\bQ$ and finite-rank subgroups. 

We formulate and prove the geometric analogue. Since geometric canonical heights vanish on constant points and their images under homomorphisms, constant subvarieties must occur in the conclusion even when they are not cosets.

Let $X\subset G_{\overline K}$ be a closed integral subvariety. Let $\Stab_{G_{\overline K}}(X)$ be its stabilizer, and set
\[
    S=\Stab_{G_{\overline K}}(X)^0,\qquad
    q:G_{\overline K}\longrightarrow\widetilde G=G_{\overline K}/S.
\]
Here $S$ is the connected stabilizer of $X$; quotienting by $S$ makes the stabilizer of $q(X)$ finite. We say that $X$ is \emph{$\Gamma$-special} if there exist $\gamma\in\Gamma$, a semi-abelian variety $G_0$ over $k$, a homomorphism $h:(G_0)_{\overline K}\to\widetilde G$ with finite kernel, and a closed integral $k$-subvariety $Y\subset G_0$ such that
\begin{equation}\label{eq:special}
    q(X)=q(\gamma)+h(Y_{\overline{K}}).
\end{equation}
This is the notion used in Luo--Yu's geometric Bogomolov theorem \cite[Theorem~1.1]{LY2025Bogomolov}, with the torsion translation replaced by an element of $\Gamma$.

There is an equivalent formulation using the semi-abelian Chow trace. Let
\[
    (T_X,\tau_X)=\Tr_{\overline K/k}(\widetilde G).
\]
The group $T_X$ is defined over $k$, and $\tau_X:(T_X)_{\overline K}\to\widetilde G$ is universal for homomorphisms from constant semi-abelian varieties; see \cite[Corollary~2.4.4(2)]{Liu2024chowtrace1motiveslangneron} and \cite{Liu2026pushforward}. Then $X$ is $\Gamma$-special if and only if there exist $\gamma\in\Gamma$ and a closed integral $k$-subvariety $Y'\subset T_X$ such that
\begin{equation}\label{eq:special-trace}
    q(X)=q(\gamma)+\tau_X(Y'_{\overline{K}}).
\end{equation}
Section~\ref{sec:heights} proves the equivalence. In both formulations we first pass to the stabilizer quotient.

\begin{theorem}\label{thm:main}
    With the notation above, the following conditions are equivalent.
    \begin{enumerate}[beginpenalty=10000]
        \item For every $\epsilon>0$, the set
        \[
            X(\overline{K})\cap\Gamma_\epsilon
        \]
        is Zariski dense in $X$.
        \item $X$ is $\Gamma$-special, equivalently it satisfies \eqref{eq:special}, or equivalently \eqref{eq:special-trace}.
    \end{enumerate}
\end{theorem}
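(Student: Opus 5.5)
The implication (2)$\Rightarrow$(1) is the easy direction, so I would treat it first. Suppose $q(X)=q(\gamma)+h(Y_{\overline K})$. Constant points $y\in Y(k)$ are Zariski dense in $Y_{\overline K}$. Their images $h(y)$ have canonical height zero on $\widetilde G$, because geometric canonical heights vanish on images of constant points under homomorphisms. Using a compatible choice of $L$ on $\widetilde G$ and the comparison of Section~\ref{sec:heights}, I would lift these images to points of $X$ whose height is arbitrarily small modulo $\gamma+S$.

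To handle the fibres of $q$, I would apply the geometric Bogomolov theorem of Luo--Yu to the coset $S$ itself. Since $S$ is a semi-abelian subvariety, its torsion points are dense in $S$, so small points are dense in each fibre $\gamma+x+S$ up to translation. Concretely, the points $\gamma+\tilde y+s$ are dense in $X$, where $\tilde y$ is a lift of $h(y)$ in $X-\gamma$ and $s\in S_{\tors}$. The lift $\tilde y$ must be chosen with small height. To arrange this, I would take $\tilde y$ in the preimage of $h(y)$ and use that $q$ restricted to $X$ has fibres that are $S$-cosets; in each such coset I would pick a point of minimal height up to $\epsilon$. This is possible because the height on a coset $x+S$ has infimum equal to the height of the image on the quotient, after suitable normalization. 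Hence all these points lie in $\Gamma_\epsilon$.

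For (1)$\Rightarrow$(2), I would first reduce to the case where $\Stab(X)$ is finite. If $x=\gamma+b$ is a small point of $X$ with $\gamma\in\Gamma$, then $q(x)=q(\gamma)+q(b)$, and $q(\Gamma)$ lies in the division group of $q(\Gamma_0)$. Moreover $q$ does not increase heights up to a constant, so $q(X)$ also satisfies (1). Conversely, (2) only concerns $q(X)$. We may therefore replace $G$ by $\widetilde G$ and assume the stabilizer of $X$ is finite.

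Next I would follow Poonen/R\'emond and enlarge $\Gamma_0$ to a finitely generated $\Gamma_0\supset\langle\gamma\rangle$ that is stable under the relevant Galois action, after a finite extension of $K$. The main step is a geometric Vojta-type inequality, which is the input the introduction mentions. It should state the following for $X$ with finite stabilizer that is not of the form translate-of-constant. There are $\epsilon>0$, a proper closed $Z\subsetneq X$, and constants such that any two points $x_1,x_2\in (X\setminus Z)(\overline K)\cap\Gamma_\epsilon$ that are "close in angle" (normalized images in $\Gamma\otimes\bR$ near each other) with large height ratio cannot exist.

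Combining this gap principle with a Mumford-type counting argument, I would split $\Gamma\otimes\bR$ modulo the trace part into finitely many cones. The conclusion is that the $\gamma$-components of points of $X\cap\Gamma_\epsilon$ outside $Z$ have bounded height modulo the image of $\Gamma\cap\tau_X(T_X(k))$. By Northcott modulo the trace, which holds in Lang--N\'eron form for finitely generated $\Gamma_0$ over function fields, these $\gamma$ range over finitely many classes modulo $\tau_X(T_X)$. Density and irreducibility then give a single $\gamma$ such that $(X-\gamma)\cap B_{\epsilon'}$ is dense for all $\epsilon'$. At that point Luo--Yu's geometric Bogomolov theorem \cite[Theorem~1.1]{LY2025Bogomolov} yields $X-\gamma=\text{torsion}+\tau_X(Y')$, and the torsion is absorbed into $\gamma\in\Gamma$. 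The equivalence of formulations \eqref{eq:special} and \eqref{eq:special-trace} is taken from Section~\ref{sec:heights}.

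The main obstacle is the Vojta inequality in the semi-abelian geometric setting together with the need to quotient by the trace. Heights degenerate along $\tau_X(T_X)$, so the inequality must be proved for the height modulo constant parts. I would try first to deduce it from the abelian case, namely Buium/Hrushovski-style function field Mordell--Lang and Faltings--Vojta over function fields. For this I would work on the compactification $\overline G$ with $L=M+\overline\pi^*N$ and treat the toric part via the linear height $\hh_M$, using the homogeneity degrees $1$ and $2$ separately.
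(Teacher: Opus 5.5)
Your outline has the same shape as the paper's: quotient by the connected stabilizer, a Vojta-type inequality to bound heights off an exceptional locus, then a bounded-height analysis ending in geometric Bogomolov. \textbf{The gap is in the bounded-height step.} You claim that, by Northcott modulo the trace, the approximating $\gamma$ of bounded height fall into finitely many classes modulo $\tau_X(T_X)$. You then pigeonhole to one $\gamma$ that makes $(X-\gamma)\cap B_{\epsilon'}(G)$ dense for every $\epsilon'$. But $\Gamma=\Gamma_0^{\mathrm{div}}$ is not finitely generated and its points are not defined over one field, so Lang--N\'eron does not apply. In fact the finiteness is false. If $P\in\Gamma_0$ has $\hh_L(P)>0$ and $[n]\gamma_n=P$, then $\gamma_n\in\Gamma$ and $\hh_L(\gamma_n)\leq\hh_L(P)$. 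Moreover $\rho(\gamma_n-\gamma_{n'})=|1/n-1/n'|\,\rho(P)>0$ for $n\neq n'$, so the $\gamma_n$ are pairwise distinct modulo all points of height zero. Finite rank gives only precompactness of the bounded part of $\Gamma$ in $(\Gamma\otimes_\bZ\bR)/\ker\rho$. For each radius you get a dense cluster around some $\lambda_n$ (Lemma~\ref{lem:clusters}), but $\lambda_n$ depends on $n$ and a priori may converge to a real point not in the image of $\Gamma$. So the ``single $\gamma$'' is the conclusion to be proved, not an available intermediate step.

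The paper gets around this with three ingredients you are missing:
\begin{itemize}
\item The difference maps $\alpha_m$ cancel the moving centers, and their images have finite stabilizer for $m>\dim G$ (Lemma~\ref{lem:incidence}). Geometric Bogomolov applied to $W_m$ then yields $X=a+Y_{\overline K}$ with $Y\subset H$ constant (Lemma~\ref{lem:descent}).
\item An inverse height inequality with no additive error on constant models (Lemma~\ref{lem:inverse}) gives $\rho(a-\lambda_n)\to0$.
\item The height gap of Lemma~\ref{lem:gap} makes $\ker\rho$ rational (Lemma~\ref{lem:rational}). Combined with $[n]^{-1}\Gamma=\Gamma$, this yields $a\in\Gamma+\{\hh_L=0\}$ (Corollary~\ref{cor:point}), and \eqref{eq:zero} splits the height-zero part into torsion plus a point of $H(k)$.
\end{itemize}

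There are also secondary problems.
\begin{itemize}
\item Your Vojta inequality excludes $X$ that are translates of constant varieties. That leaves $X=a+Y_{\overline K}$ with arbitrary $a$ untreated, which is exactly where the translate must be located. The paper's Theorem~\ref{thm:vojta} holds for every $V$ off $Z_V$, with no such exclusion, and Abramovich's theorem makes $Z_X$ proper once the stabilizer is finite.
\item You do not indicate how to obtain that inequality. Qualitative function-field Mordell--Lang concerns exact intersections with $\Gamma$ and gives no inequality for points merely close to $\Gamma$, and the semi-abelian case needs the linear and quadratic normalizations separately. The paper proves it directly, with constants uniform in the weights and the fields of definition.
\item In the stabilizer reduction you need $q(\Gamma)=q(\Gamma_0)^{\mathrm{div}}$, not only an inclusion, to lift the translating point back to $\Gamma$. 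This uses divisibility of $S$.
\item For (2)$\Rightarrow$(1), your claim that the infimum of the height on $x+S$ equals the height of $q(x)$ is unproved for nonsplit extensions, where height-zero points need not be dense (Luo--Yu, Example~1.2). You do not need it: $X-\gamma$ is special with zero torsion translation, so \cite[Theorem~1.1]{LY2025Bogomolov} applies to it directly, as in the paper.
\end{itemize}
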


When $\Gamma_0=\{0\}$, the group $\Gamma$ is the torsion subgroup, and Theorem~\ref{thm:main} recovers Luo--Yu's geometric Bogomolov theorem: translation by torsion preserves canonical heights. For a group of positive rank, the approximating elements of $\Gamma$ can have unbounded height. Theorem~\ref{thm:vojta} supplies the estimate needed to handle them. Even after this estimate, bounded height does not imply finiteness, since every constant point has height zero; Proposition~\ref{prop:bounded} instead classifies the subvarieties on which points of bounded height approach $\Gamma$ arbitrarily closely.

Induction on dimension gives a single height threshold and a finite family of special subvarieties, as in \cite[Introduction, Conjectures~1 and~2]{Poo99}.

\begin{corollary}\label{cor:exceptional}
    With the notation above, there are $\epsilon_0>0$ and a finite, possibly empty, family of integral $\Gamma$-special subvarieties $Z_1,\ldots,Z_m\subset X$ such that
    \[
        X(\overline{K})\cap\Gamma_{\epsilon_0}\subseteq\bigcup_{i=1}^m Z_i(\overline{K}).
    \]
    If $X$ is not $\Gamma$-special, all the $Z_i$ may be taken to be proper subvarieties of $X$.
\end{corollary}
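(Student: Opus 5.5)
We will derive the corollary from Theorem~\ref{thm:main} by Noetherian induction on closed subvarieties of $X$, following the scheme of \cite[Introduction]{Poo99}. For every closed subset $W\subseteq X$ we prove the following statement $P(W)$: there are $\epsilon_W>0$ and finitely many integral $\Gamma$-special subvarieties of $G_{\overline K}$ contained in $W$ whose union contains $W(\overline K)\cap\Gamma_{\epsilon_W}$. Because $\Gamma_\epsilon$ shrinks as $\epsilon$ decreases, we may pass to smaller thresholds freely. Hence $P$ for a finite union of closed sets follows from $P$ for each member, using the minimum threshold and the union of the families. Noetherian induction therefore reduces everything to the case where $W$ is integral and $P$ is already known for every proper closed subset of $W$.

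Fix such an integral $W$. \textbf{Case 1:} $W$ is $\Gamma$-special. Then the family $\{W\}$ with any $\epsilon_W$ works. \textbf{Case 2:} $W$ is not $\Gamma$-special. By Theorem~\ref{thm:main}, applied to $W$ (it is a closed integral subvariety of $G_{\overline K}$, so the theorem applies verbatim), there is some $\epsilon_1>0$ for which $W(\overline K)\cap\Gamma_{\epsilon_1}$ is not Zariski dense in $W$. Let $W'\subsetneq W$ be its Zariski closure. By the induction hypothesis, $P(W')$ gives $\epsilon_{W'}$ and a finite family of $\Gamma$-special subvarieties inside $W'$. Put $\epsilon_W=\min(\epsilon_1,\epsilon_{W'})$. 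Then
\[
W(\overline K)\cap\Gamma_{\epsilon_W}\subseteq W'(\overline K)\cap\Gamma_{\epsilon_{W'}},
\]
so the same family works for $W$.

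Applying $P(X)$ gives the first assertion, with $\epsilon_0=\epsilon_X$. For the second assertion, take $W=X$ non-special in the argument above. In that case the family comes from $W'\subsetneq X$, so every $Z_i$ is a proper subvariety of $X$. One small point needs care. Being $\Gamma$-special is defined intrinsically for closed integral subvarieties of $G_{\overline K}$, using their own stabilizers. So specialness of the $Z_i$ does not depend on the ambient $X$, and the induction is legitimate.

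There is no serious obstacle; all the depth lies in Theorem~\ref{thm:main}. The only things to check are the ones already noted:
\begin{itemize}
\item the sets $\Gamma_\epsilon$ are monotone in $\epsilon$;
\item Noetherian induction is valid on the Zariski topology of $X$;
\item the empty family is allowed when $W(\overline K)\cap\Gamma_{\epsilon}$ is empty.
\end{itemize}
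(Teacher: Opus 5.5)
Your proof is correct and is essentially the paper's argument. The paper inducts on $\dim X$: it applies Theorem~\ref{thm:main} to get a threshold at which $X(\overline K)\cap\Gamma_\epsilon$ has proper closure, then applies the induction hypothesis to the finitely many irreducible components of that closure with the minimum threshold. Your Noetherian induction on closed subsets, with its reduction from finite unions to integral members, packages the same step.
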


The method in this paper does not work in positive characteristic; see \S\ref{sec:charp}.

\subsection{Related results}
Mordell conjectured that a curve over $\bQ$ of genus at least two has finitely many rational points. Faltings proved the number-field form \cite[Satz~7]{Fal83}: for a smooth projective geometrically integral curve $C$ of genus at least two over a number field $F$, the set $C(F)$ is finite. A rational base point, when one exists, embeds $C$ in its Jacobian. The Mordell--Weil theorem makes the Jacobian's group of $F$-rational points finitely generated, placing Mordell's conjecture in a more general intersection problem.

Lang \cite[p.~29]{Lan60} formulated this problem for tori and abelian varieties and asked for its extension to semi-abelian varieties. Faltings \cite{Fal91,Fal94} proved the abelian case, Vojta \cite[Theorem~0.2 and pp.~133--134]{Voj96} the semi-abelian case for finitely generated groups, and McQuillan \cite[Introduction and \S3]{McQ95} the finite-rank case. Thus the intersection of a subvariety of a complex semi-abelian variety with a finite-rank subgroup is a finite union of intersections of that subgroup with semi-abelian cosets contained in the subvariety; each coset can be translated by a point of the subgroup. This gives Mordell's conjecture because a curve of genus at least two in its Jacobian contains no positive-dimensional abelian translate. The torsion case is the Manin--Mumford theorem. Han--Luo--Yu \cite[Theorem~1.1]{HLY26} give a bound for the number of cosets in terms of dimension, degree, and rank in characteristic zero. These are statements about exact intersections.

Over function fields of characteristic zero, Manin \cite{Man63} and Grauert \cite[p.~132]{Gra65} proved the geometric Mordell theorem: a smooth projective geometrically integral non-isotrivial curve of genus at least two has finitely many $K$-rational points. Here a curve is non-isotrivial if it does not become defined over $k$ after any finite extension of $K$. Constant curves have Zariski dense constant points, which explains the non-isotriviality assumption. Hrushovski \cite[Theorem~1.1]{Hru96} proved the relative function-field Mordell--Lang theorem in all characteristics; see \cite[\S4]{BBP18} for its semi-abelian formulation. In characteristic zero, the subgroup may have finite rational rank. In characteristic $p>0$, the cited formulation requires containment in the prime-to-$p$ divisible hull of a finitely generated group: every point must have a multiple in that group with multiplier prime to $p$. Finite rational rank alone does not imply this condition.

Over number fields, Ullmo \cite{Ull98} proved Bogomolov's conjecture for curves in their Jacobians, and Zhang \cite{Zha98} proved it for subvarieties of abelian varieties. The conclusion is that a subvariety with dense small points is a torsion translate of an abelian subvariety.

For geometric heights, constant subvarieties also occur. Gubler \cite[Theorem~1.1]{Gub07} proved the theorem when the abelian variety is totally degenerate at one place, using tropical equidistribution. Yamaki \cite[Theorem~1.5]{Yam18} reduced the general abelian conjecture to nowhere-degenerate abelian varieties with trivial trace; here nowhere degenerate means potentially good reduction at every divisorial place. Gao--Habegger \cite[Theorem~1.1 and Appendix~A]{GH19} proved the abelian geometric Bogomolov theorem in characteristic zero and transcendence degree one. Cantat--Gao--Habegger--Xie \cite[Theorem~A]{CGHX21} treated arbitrary transcendence degree in characteristic zero, and Xie--Yuan \cite[Theorem~1.1]{XY22} proved the theorem in arbitrary characteristic. Luo--Yu \cite[Theorem~1.1]{LY2025Bogomolov} extended it to semi-abelian varieties. Their Example~1.2 explains why one must allow positive heights tending to zero: a special subvariety of a nonconstant extension need not have dense height-zero points.

The closest arithmetic estimate to our argument is R\'emond's Vojta inequality \cite[Theorem~4.1]{Rem03}. His approximation theorem \cite[Theorem~1.2]{Rem03} permits errors $z$ satisfying
\[
    \hh_L(z)\leq\epsilon\bigl(1+\hh_L(\gamma)\bigr)
\]
for the corresponding arithmetic canonical height, and gives finiteness outside positive-dimensional cosets. In the abelian case, Ge \cite[Theorems~1.2 and~1.2$'$]{Ge24} chooses the height threshold in terms of the dimension of the ambient variety and the degree of the subvariety, and bounds the number of cosets needed to cover the intersection exponentially in the rank of the group.

Moriwaki \cite[Theorem~A]{Mor01} treats abelian varieties over finitely generated extensions of $\bQ$ using big arithmetic polarizations; those heights include arithmetic contributions. Hultberg \cite[Theorems~2 and~4]{Hul26} proves Bogomolov for semi-abelian varieties over globally valued fields of characteristic zero and a gap principle over non-archimedean globally valued fields for small points on subvarieties with finite stabilizer whose differences generate the ambient group. The Vojta inequality below concerns geometric heights over the fixed algebraically closed constant field. For fixed geometric data, its constants are independent of the finite extension of $K$ over which the points are defined, and the auxiliary comparison is uniform in the integer weights.

After completing the proof of Theorem~\ref{thm:main} in August 2026, the author learned in September 2026 of an independent proof of a similar result by Ningjun Jiang. The proof presented here was developed independently.

\subsection{Plan of proof}

The implication from specialness to the density of approximations follows from geometric Bogomolov. For the converse, pass to the connected stabilizer quotient; the image of $\Gamma$ remains a finite-rank division group. We are then reduced to a subvariety $X$ with finite stabilizer.

First suppose that the approximating points on $X$ have bounded height. Sections~\ref{sec:heights} and~\ref{sec:bounded} use the seminorm
\[
    \rho(x)=\hh_M(x)+\sqrt{\hh_N(\pi(x))},
\]
whose two summands have the same degree of homogeneity. Finite rank gives a finite covering of the relevant bounded part of $\Gamma$ by sets of small $\rho$-diameter. One part contains approximations to a dense subset of $X$. Taking differences cancels its center. Thus, for each integer $m\geq2$, the image of
\[
    X^m\longrightarrow G^{m-1},\qquad (x_1,\ldots,x_m)\longmapsto(x_2-x_1,\ldots,x_m-x_1)
\]
has dense small points. For sufficiently large $m$, the image has finite stabilizer. Geometric Bogomolov makes it constant up to torsion translation, and a fiber of the next difference map gives $X=a+Y_{\overline K}$ with $Y$ defined over $k$.

Determining the translate requires more than geometric Bogomolov. A generically finite difference map on $Y$, defined over $k$, gives a height inequality without an additive error. It follows that
\[
    \inf_{\gamma\in\Gamma}\rho(a-\gamma)=0.
\]
The positive gap for nonzero heights over each fixed finite extension of $K$ implies that the real kernel of $\rho$ on a finite-dimensional rational subspace is defined over $\bQ$. The identities $[n]^{-1}\Gamma=\Gamma$ then give $a=\gamma+z$ with $\gamma\in\Gamma$ and $\hh_L(z)=0$, as required.

To remove the height bound, let $Z_X$ be the union of the positive-dimensional semi-abelian cosets contained in $X$. The Vojta inequality, Theorem~\ref{thm:vojta}, excludes tuples in $X\setminus Z_X$ whose heights grow rapidly and whose normalized differences are small. The two height summands require different normalizations. Covering the corresponding normalized images of $\Gamma$ by finitely many small sets gives the height bound of Proposition~\ref{prop:large}. Since $X$ has finite stabilizer, $Z_X$ is a proper closed subset; removing it preserves density and reduces the theorem to the bounded-height case.

The proof of the Vojta inequality occupies Section~\ref{sec:vojta}. We first work over a curve. The geometric product estimate retains the Samuel multiplicities in the derivative intersections and bounds the degree and height of each product factor. The comparison obtained from the auxiliary section then descends to products of smaller dimension with leading constants independent of the integer weights and the finite extensions of $K$ over which the points are defined. Fixing those weights and a finite extension of $K$ containing the points before letting the auxiliary degree tend to infinity removes the genus term in Siegel's lemma. Finally, one generic complete-intersection curve preserves normalized heights, up to a fixed factor, over every finite extension of $K$ and gives the result for the original polarization.

\subsection{Notation and terminology}

A variety is an integral separated scheme of finite type over a field, and a subvariety is closed and integral. A subscript denotes extension of scalars. A constant variety over $\overline K$ is the base change of a variety over $k$. Products and stabilizers are taken over $\overline K$ unless another field is specified, and density means Zariski density. We use multiplicative notation only in the torus examples.

\subsection{Acknowledgement and AI usage}
The author is grateful to Professor Huayi Chen for his encouragement and thoughtful advice during the preparation of this manuscript.

The ideas are due to the author's brain. The author used ChatGPT to polish the paper and verify the mathematical correctness. The author thanks OpenAI's ChatGPT for assistance with the presentation of the manuscript.

\section{Heights and approximation of a point}\label{sec:heights}

We prove that a point approximated arbitrarily closely by $\Gamma_0^{\mathrm{div}}$, with $\Gamma_0$ finitely generated, belongs to $\Gamma_0^{\mathrm{div}}$ modulo a point of height zero. The proof uses a seminorm associated with the height and a positive lower bound for nonzero heights over each fixed finite extension of $K$.

\subsection{Canonical heights}

Keep the line bundles $M$, $N$, and $L$ chosen in the introduction. If $h_M$ and $h_N$ are corresponding Weil heights, Tate's limiting process gives, for $x\in G(\overline K)$ and $a\in A(\overline K)$,
\[
    \hh_M(x)=\lim_{n\to\infty}\frac{h_M([n]x)}{n},\qquad
    \hh_N(a)=\lim_{n\to\infty}\frac{h_N([n]a)}{n^2}.
\]
Let $\overline M$ and $\overline N$ be the resulting canonically metrized adelic line bundles. Following Luo--Yu \cite[\S2.4--\S2.5]{LY2025Bogomolov}, put $\overline L=\overline M+\overline\pi^*\overline N$, so that $\hh_L=h_{\overline L}$. These heights are the geometric counterparts of those in \cite[\S3 and Lemma~4]{Poo99}. For every integer $n$, they satisfy
\begin{equation}\label{eq:scaling}
    \hh_M([n]x)=|n|\hh_M(x),\qquad
    \hh_N([n]a)=n^2\hh_N(a)\qquad(n\in\bZ).
\end{equation}
To give the two summands the same scaling under multiplication, put
\begin{equation}\label{eq:rho}
    \rho(x)=\hh_M(x)+\sqrt{\hh_N(\pi(x))}.
\end{equation}
Both summands are homogeneous of degree one and satisfy the triangle inequality. For $\hh_M$, apply the geometric Weil-height inequality obtained from the effective-divisor inequality in \cite[Corollary~3.1]{Rem03} to $[n]x$ and $[n]y$, divide by $n$, and let $n$ tend to infinity. For $\sqrt{\hh_N}$, use the nonnegativity of the quadratic form $\hh_N$. Hence
\[
    \rho(x+y)\leq\rho(x)+\rho(y),\qquad \rho([n]x)=|n|\rho(x).
\]
Each of $\hh_M$, $\sqrt{\hh_N\circ\pi}$, and $\rho$ vanishes on torsion and induces a seminorm on $G(\overline{K})\otimes_\bZ\bQ$. These seminorms extend continuously to finite-dimensional real scalar extensions: if $e_1,\ldots,e_r$ is a rational basis, the bound $\rho(\sum_i a_i e_i)\leq\sum_i|a_i|\rho(e_i)$ gives a unique continuous extension, and the same argument applies to the other two seminorms.

The elementary bounds
\begin{equation}\label{eq:compare}
    \rho(x)\leq\hh_L(x)+\sqrt{\hh_L(x)},\qquad
    \hh_L(x)\leq\rho(x)+\rho(x)^2
\end{equation}
allow us to replace $\hh_L$ by $\rho$ when considering boundedness or convergence to zero. For a product with chosen line bundles $L_1,\ldots,L_r$ defining the heights, we use the external tensor product, whose height is
\[
    \hh_{L_1\boxtimes\cdots\boxtimes L_r}(x_1,\ldots,x_r)=\sum_{i=1}^r\hh_{L_i}(x_i).
\]

Let $G'$ be another semi-abelian variety, with abelian quotient $A'$. Choose its standard compactification, boundary line bundle $M'$, and symmetric ample line bundle $N'$ on $A'$, and put $L'=M'+\overline\pi'^*N'$, where $\overline\pi'$ extends $G'\to A'$. For a homomorphism $f:G\to G'$, \cite[Lemma~3.3, proof of (1)]{LY2025Bogomolov} gives $\hh_{L'}(f(x))\leq C\hh_L(x)$, where $C>0$ depends on $f$ and the height data. If $f$ has finite kernel, the reverse comparison also holds. This is part~(2) of the same lemma for isogenies. In general, choose character bases so that the map on the tori has the form
\[
    (t_1,\ldots,t_r)\longmapsto(t_1^{d_1},\ldots,t_r^{d_r},1,\ldots,1),\qquad d_i\geq1.
\]
The target boundary pulls back to $\sum_i d_iD_i$, where $D_i$ is the sum of the zero and infinity divisors in the $i$th source factor. Nonnegativity of their canonical heights gives the reverse comparison for $\hh_M$. The map on abelian quotients is finite onto its image. Indeed, the inverse image of a positive-dimensional connected kernel would map with finite kernel to the target torus; it would therefore be affine, contradicting its positive-dimensional abelian quotient. The pullback of $N'$ is consequently ample and gives the reverse comparison for $\hh_N\circ\pi$. Applying the upper bound to an isomorphism and its inverse shows that changing the torus coordinates preserves these comparisons.

We next identify the points of height zero. Let $(G^{\overline{K}/k},\tr)$ be the geometric $\overline{K}/k$-trace of $G_{\overline{K}}$; see \cite[Corollary~2.4.4(2)]{Liu2024chowtrace1motiveslangneron} and \cite[\S3.2]{LY2025Bogomolov}. Thus every homomorphism from a constant semi-abelian variety to $G_{\overline{K}}$ factors uniquely through $\tr$. Its connected reduced kernel descends to $k$ by \cite[Theorem~2.3.12]{Liu2024chowtrace1motiveslangneron}. The quotient by this kernel is constant and maps to $G_{\overline K}$, so universality gives a map from the quotient back to $G^{\overline{K}/k}$. Uniqueness in the universal property makes the composite with the quotient map the identity. The connected reduced kernel is therefore zero, and $\tr$ has finite kernel. In characteristic zero, every finite subgroup of a constant semi-abelian variety is defined over $k$. Its quotient, and hence the image of $\tr$, is constant. Write $H$ for this image with its induced $k$-structure.

The universal property also identifies the two descriptions of specialness in the introduction. For the quotient $\widetilde G$, it factors the homomorphism in \eqref{eq:special} as $h=\tau_X\circ v_{\overline{K}}$, where $v:G_0\to T_X$ is defined over $k$. The kernel of $v$ is finite because that of $h$ is finite. Thus $v$ is finite onto its image, and $Y'=v(Y)$ is an integral closed $k$-subvariety of $T_X$. This gives \eqref{eq:special-trace}. Conversely, \eqref{eq:special-trace} gives \eqref{eq:special} with $G_0=T_X$ and $h=\tau_X$.

By \cite[Theorem~1.1]{LY2025Bogomolov} applied to a point,
\begin{equation}\label{eq:zero}
    \{x\in G(\overline{K}):\hh_L(x)=0\}=G(\overline{K})_{\tors}+H(k).
\end{equation}

\subsection{A lower bound for positive heights}

Lang--N\'eron and positive definiteness modulo the trace bound the positive values of $\hh_N$ from below over each fixed finite extension of $K$; see \cite[Theorems~7.1 and~9.15]{Con06}. Above points of quadratic height zero, we express $\hh_M$ as an integral boundary degree divided by a fixed denominator. This proves the following analogue of \cite[Lemma~10(5)]{Poo99}.

\begin{lemma}\label{lem:gap}
    For every finite extension $F/K$, there is $a_F>0$ such that
    \[
        P\in G(F),\quad\hh_L(P)<a_F\quad\Longrightarrow\quad\hh_L(P)=0.
    \]
\end{lemma}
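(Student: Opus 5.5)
We prove the lemma by splitting $\hh_L(P)=\hh_M(P)+\hh_N(\pi(P))$ and bounding each summand from below separately, since both are nonnegative. Fix $F/K$ finite. We may enlarge $F$ to a finite extension $F'$ over which the splitting $T\simeq\Gm^t$, the compactification $\overline G$, and the divisor $D$ are defined. A constant valid for $G(F')$ is valid for $G(F)$, because heights are normalized and do not change under enlargement of the field of definition. So we assume these data are defined over $F$. Let $\mathcal B_F$ be a normal projective model of $F$ over $k$, finite over $\mathcal B$. Pull back $\mathcal H$ to it to measure degrees of divisors.

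\emph{Abelian part.} By the Lang--N\'eron theorem \cite[Theorem~7.1]{Con06}, the group $A(F)/\tr(A^{F/k}(k))$ is finitely generated. By \cite[Theorem~9.15]{Con06}, $\hh_N$ induces a positive definite quadratic form on this group tensored with $\bR$, after dividing out torsion. A positive definite quadratic form on a lattice has a positive minimum on the nonzero lattice vectors. Moreover, $\hh_N$ depends only on the class of a point modulo torsion and the trace, since it vanishes on these and is quadratic. We obtain $b_F>0$ with $\hh_N(\pi(P))\in\{0\}\cup[b_F,\infty)$ for all $P\in G(F)$. It therefore suffices to treat points $P$ with $\hh_N(\pi(P))=0$ and $\hh_M(P)$ small.

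\emph{Toric part over quadratic height zero.} Suppose $\hh_N(\pi(P))=0$. By \eqref{eq:zero} applied on $A$, we have $\pi(P)=\tau+c$ with $\tau$ torsion and $c$ in the image of the trace of $A$. After enlarging $F$ once more, by a finite extension depending only on $F$, we want $[n_0]\pi(P)$ to lie in a fixed constant abelian subvariety for a uniform $n_0$. This needs some care. The torsion of $A(F)$ is finite by Lang--N\'eron, so its order gives a uniform $n_0$. The image of the trace is defined over a fixed finite extension.

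Next we lift the constant part to $G$. The extension $G$ restricted over the constant part $\tr(A^{\overline K/k})$ is described by points of the dual, on which the boundary height can be computed. Write $Q=[n_0]P$, lying over a point of the constant part. The canonical boundary height $\hh_M(Q)$ is then the sum over the $t$ torus factors of a local intersection number of $Q$ with the zero and infinity divisors over $\mathcal B_F$. Equivalently, each coordinate of $Q$ is a rational section of a line bundle on the model, algebraically equivalent to zero. The coordinate contributes the $\mathcal H$-degree of the positive (or negative) part of its divisor, divided by $[F:K]\deg_{\mathcal H}$ and by a fixed denominator. The denominator comes from the model of the extension class, which is torsion up to a constant class after multiplication by a fixed integer $e_F$. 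Since these degrees are integers, $\hh_M(Q)\in\frac1{d_F}\bZ_{\ge0}$ for some $d_F$ depending only on $F$. Then $\hh_M(P)=\hh_M(Q)/n_0$ by \eqref{eq:scaling}, so $\hh_M(P)\in\{0\}\cup[1/(n_0d_F),\infty)$.

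Taking $a_F=\min(b_F,1/(n_0d_F))$ proves the lemma. The main obstacle is the integrality statement for $\hh_M$. The canonical metric on $M$ is defined by a limiting process, and we must show it is a model metric over points lying above the constant part of $A$. My first approach would be to show that $[n]$-invariance forces the N\'eron-type local functions to be given by a fixed model after the multiplication by $e_F$. For split $G$ this is immediate: $\hh_M$ is the naive toric height of the coordinates, which is a degree.
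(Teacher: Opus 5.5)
Your architecture matches the paper's. Lang--N\'eron plus positive definiteness bound $\hh_N\circ\pi$ away from zero, and what remains is an integrality statement for $\hh_M$ above the trace image. That remaining step is the heart of the lemma, and you assert it rather than prove it; you flag it yourself as the main obstacle. Moreover, the mechanism you propose for the denominator is wrong.

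The extension classes $\eta_j\in A_0^\vee(F)$ of the pulled-back extension over $A_0=A^{F/k}$ are in general neither torsion nor constant modulo torsion. Any $F$-point of $A_0^\vee$ can occur, for instance a nonconstant one when $\mathcal B_F$ maps nonconstantly to $A_0^\vee$. So no integer $e_F$ makes them ``torsion up to a constant class'', and this cannot be what makes the metric a model metric.

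The missing idea is that constancy of $A_0$ gives a good model whatever the $\eta_j$ are:
\begin{enumerate}
\item Since $A_0^\vee$ is proper over $k$ and $\mathcal B_F$ is normal, the rational maps $\mathcal B_F\dashrightarrow A_0^\vee$ given by the $\eta_j$ are regular at every codimension-one point. Hence they are morphisms on an open $U$ whose complement has codimension at least two.
\item Pulling back the rigidified Poincar\'e bundles along these maps gives the extension and its standard compactification over $A_0\times U$. The rigidified identity $[n]^*Q\simeq Q^{\otimes n}$ for algebraically trivial $Q$ gives $[n]^*D=nD$ exactly on this model.
\item For $P'$ above $a\in A_0(k)$, the section extends over codimension-one points by properness. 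Its contact with $D$ is an effective integral Weil divisor $D_{P'}$.
\item The model height $\deg_{\mathcal H_F}(D_{P'})/[F:K]$ differs from a Weil height by $O(1)$ and scales exactly by $n$. Tate's limit therefore identifies it with $\hh_M$.
\end{enumerate}
So $\hh_M(P')$ is either $0$ or at least $[F:K]^{-1}$, with no further denominator. Your remark that each coordinate is a rational section of an algebraically trivial line bundle on the model is the right local picture. What it lacked was this exactness, which rules out correction terms between the model height and the canonical one.

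There is also a smaller slip. $A(F)_{\tors}$ is infinite as soon as the trace is nonzero: it contains the torsion of $\tr_{F/k}(A^{F/k}(k))$, and $k$ is algebraically closed. So it cannot supply your uniform $n_0$. Use instead the exponent $e$ of the finite torsion subgroup of the finitely generated group $A(F)/\tr_{F/k}(A^{F/k}(k))$. By positive definiteness, $\hh_N(\pi(P))=0$ means the class of $\pi(P)$ is torsion there, so $[e]\pi(P)\in\tr_{F/k}(A^{F/k}(k))$ directly, with no detour through \eqref{eq:zero} over $\overline K$. Combined with the above, $\hh_M(P)=\hh_M([e]P)/e$ is either $0$ or at least $(e[F:K])^{-1}$.
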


\begin{proof}
    A lower bound after a finite extension of $F$ also applies to $G(F)$. We may thus enlarge $F$ so that the splitting, compactification, and line bundles defining the heights are defined over it. Let $(A^{F/k},\tr_{F/k})$ be the $F/k$-trace of $A_F$. The group
    \[
        A(F)/\tr_{F/k}(A^{F/k}(k))
    \]
    is finitely generated by Lang--N\'eron, and its N\'eron--Tate height is positive definite after real scalar extension; see \cite[Theorems~7.1 and~9.15]{Con06}. Modulo torsion, this group is a lattice, so its positive height values have a positive lower bound. Fix an integer $e\geq1$ killing the torsion in the displayed quotient. If $\hh_L(P)$ is below the positive lower bound, then $\hh_N(\pi(P))=0$ and
    \[
        [e]\pi(P)=\tr_{F/k}(a)\qquad\text{for some }a\in A^{F/k}(k).
    \]
    The integer $e$ is independent of $P$. We now bound the positive linear heights above the trace image by $[F:K]^{-1}$.

    Put $A_0=A^{F/k}$ and pull $G_F\to A_F$ back to $(A_0)_F$. The resulting extension by the split torus is determined by rigidified algebraically trivial line bundles, hence by finitely many points $\eta_j\in A_0^\vee(F)$. Let $\mathcal B_F$ be the normalization of $\mathcal B$ in $F$, and let $\mathcal H_F$ be the pullback of $\mathcal H$. Properness of $A_0^\vee$ makes every rational map $\mathcal B_F\dashrightarrow A_0^\vee$ defined by an $\eta_j$ regular at the codimension-one points. All these maps are therefore morphisms on one open subset $U\subset\mathcal B_F$ whose complement has codimension at least two.

    The rigidified Poincar\'e bundles give the pulled-back extension and its standard compactification over $A_0\times U$. Let $D$ be the sum of the zero and infinity divisors in its projective-line factors. The multiplication morphism satisfies
    \begin{equation}\label{eq:modelboundary}
        [n]^*D=nD\qquad(n\geq1).
    \end{equation}
    This follows from the rigidified identity $[n]^*Q\simeq Q^{\otimes n}$ for each algebraically trivial extension bundle $Q$: in the associated projective-line bundle, the fiber coordinates are raised to their $n$th powers.

    A point $P'$ of the pulled-back extension above $a\in A_0(k)$ gives a rational section over $U$. It extends at codimension-one points by properness of the compactification. Its contact with $D$ is an effective integral Weil divisor $D_{P'}$ on $\mathcal B_F$. Its model height is
    \begin{equation}\label{eq:boundarydegree}
        \frac{\deg_{\mathcal H_F}(D_{P'})}{[F:K]},
        \qquad
        \deg_{\mathcal H_F}(D_{P'})=\mathcal H_F^{\dim\mathcal B-1}\cdot D_{P'}.
    \end{equation}
    The generic-fiber boundary is the pullback of the chosen boundary on $G$, so its model height differs from the corresponding Weil height by a bounded function. By \eqref{eq:modelboundary}, the model height scales exactly by $n$ under $[n]$. Dividing by $n$ and taking the limit therefore identifies \eqref{eq:boundarydegree} with $\hh_M$ at the image of $P'$ in $G(F)$. A nonzero effective integral divisor has positive integral $\mathcal H_F$-degree. Hence this height is either zero or at least $[F:K]^{-1}$.

    Apply this to the lift of $[e]P$ above $a$. Since $\hh_M([e]P)=e\hh_M(P)$, a point with $\hh_N(\pi(P))=0$ has either zero linear height or linear height at least $(e[F:K])^{-1}$. Choose $a_F$ smaller than this bound and the positive gap for $\hh_N\circ\pi$. Then $\hh_L(P)<a_F$ forces both summands to vanish.
\end{proof}

The height gap implies that the real kernel of $\rho$ is rational on every finitely generated subgroup. The lattice argument is the seminorm analogue of \cite[Lemma~10.1]{Con06}.

\begin{lemma}\label{lem:rational}
    Let $\Lambda\subset G(\overline{K})$ be a finitely generated subgroup. The kernel of $\rho$ on $\Lambda\otimes_\bZ\bR$ is defined over $\bQ$ with respect to $\Lambda\otimes_\bZ\bQ$.
\end{lemma}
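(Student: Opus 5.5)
We prove the statement in three steps: find a good basis, show the rational kernel spans the real kernel, and use the height gap at the one hard point.

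\emph{Setup.} Since $\Lambda$ is finitely generated, it lies in $G(F)$ for some finite extension $F/K$. Put $V=\Lambda\otimes_\bZ\bQ$, and let $W\subset V$ be the rational kernel of $\rho$, i.e. the span of the images of points of $\Lambda$ with $\rho=0$. By \eqref{eq:compare} these are exactly the points with $\hh_L=0$. Since $\rho$ is a seminorm, $W$ is a subspace and $W\otimes\bR\subset\ker(\rho_\bR)$. It remains to prove the reverse inclusion.

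\emph{Passing to the quotient.} Because $\rho$ is a seminorm, it descends to $V/W$, and so does its continuous extension, which vanishes on $W\otimes\bR$. It therefore suffices to show that the induced seminorm on $(V/W)\otimes\bR$ is a norm. Replace $\Lambda$ by $\Lambda'=\Lambda/(\Lambda\cap\{\rho=0\})$. This is a finitely generated torsion-free group. Its induced function $\rho$ is well defined, since $\rho(x+w)=\rho(x)$ whenever $\rho(w)=0$. On $\Lambda'$, $\rho$ vanishes only at $0$. The reduction is legitimate: if $nx\in\Lambda\cap\{\rho=0\}$ then $\rho(x)=0$, so the kernel is saturated and the image of $\Lambda'$ in $V/W$ is a lattice.

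\emph{Positive lower bound.} For $0\neq x\in\Lambda'$, choose a lift $P\in G(F)$. Then $\hh_L(P)>0$, so Lemma~\ref{lem:gap} gives $\hh_L(P)\ge a_F$. By \eqref{eq:compare}, $\rho(x)\ge c:=\min\{\tfrac12 a_F,\ \tfrac12\sqrt{a_F}\}>0$; indeed $\rho+\rho^2\ge a_F$ forces $\rho\ge c$. So $\rho\ge c$ on $\Lambda'\setminus\{0\}$.

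\emph{The main point: lattice argument.} Let $E=\Lambda'\otimes\bR\cong\bR^r$, with continuous seminorm $\rho$, and let $K_0=\ker\rho\subset E$. Suppose $K_0\neq 0$. The closed set $B=\{v:\rho(v)<c\}$ is convex, symmetric, and contains $K_0$, and $B\cap\Lambda'=\{0\}$. To see that this is impossible, pick $0\neq u\in K_0$ and a Euclidean norm on $E$. Since $\rho$ is continuous, $\rho\le C\|\cdot\|$ for some $C$, so $B$ contains the cylinder $\{su+v: s\in\bR,\ \|v\|<c/(2C)\}$. This cylinder is convex and symmetric with infinite volume. By Minkowski's theorem it contains a nonzero lattice point, which contradicts $B\cap\Lambda'=\{0\}$. (Alternatively, use Dirichlet approximation of the line $\bR u$ by lattice points modulo $\Lambda'$.) Hence $\rho$ is a norm on $E$, so $\ker(\rho_\bR)=W\otimes\bR$ is defined over $\bQ$.

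The most delicate step is obtaining a gap uniform over all of $\Lambda$. This is supplied by fixing one field $F$ containing $\Lambda$ and invoking Lemma~\ref{lem:gap}.
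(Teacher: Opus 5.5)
Your proof is correct and is essentially the paper's argument: both derive a uniform lower bound $c>0$ on the nonzero $\rho$-values on $\Lambda\subset G(F)$ from Lemma~\ref{lem:gap} and \eqref{eq:compare}, and then finish with a geometry-of-numbers step. The difference is minor. You first quotient by the rational kernel and then apply Minkowski's theorem to an infinite-volume cylinder; note that $\{\rho<c\}$ is open, not closed, which does not affect the argument. The paper instead quotients by the real kernel, observes that the image of $\Lambda$ there is a discrete spanning subgroup and hence a full-rank lattice, and concludes by counting ranks.
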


\begin{proof}
    Choose a finite extension $F/K$ over which generators of $\Lambda$ are defined. Lemma~\ref{lem:gap} and \eqref{eq:compare} give a positive lower bound for the nonzero $\rho$-values on $\Lambda/\Lambda_{\tors}$. Its image in $(\Lambda\otimes_\bZ\bR)/\ker\rho$ is therefore discrete. Since the image spans the quotient, it is a lattice of rank equal to the dimension of the quotient. The kernel of the map from $\Lambda/\Lambda_{\tors}$ to this lattice consequently has rank $\dim_\bR\ker\rho$ and spans $\ker\rho$.
\end{proof}

Applying this rationality statement to a point and a finite set of generators gives the following analogue of \cite[Lemma~14]{Poo99}. Here the term of height zero includes constant points of the trace, as in \eqref{eq:zero}.

\begin{corollary}\label{cor:point}
    Let $\Gamma\subset G(\overline{K})$ be a subgroup of finite rank satisfying $[n]^{-1}\Gamma=\Gamma$ for every $n\geq1$. If $a\in G(\overline{K})$ satisfies
    \[
        \inf_{\gamma\in\Gamma}\rho(a-\gamma)=0,
    \]
    then $a=\gamma+z$ for some $\gamma\in\Gamma$ and some point $z$ with $\hh_L(z)=0$.
\end{corollary}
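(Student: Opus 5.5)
Since $\Gamma$ has finite rank, we first reduce to a finitely generated situation. Choose $\gamma_1,\ldots,\gamma_r\in\Gamma$ whose images form a $\bQ$-basis of $\Gamma\otimes_\bZ\bQ$, and let $\Lambda$ be the subgroup generated by $a,\gamma_1,\ldots,\gamma_r$. Every $\gamma\in\Gamma$ has a multiple $[n]\gamma$ in $\Lambda_\Gamma:=\langle\gamma_1,\ldots,\gamma_r\rangle$ up to torsion, and $\rho$ kills torsion. Homogeneity then gives
\[
    \rho(a-\gamma)=\tfrac1n\rho([n]a-[n]\gamma).
\]
Consequently the hypothesis says that the image $\bar a$ of $a$ in $V=\Lambda\otimes_\bZ\bR$ lies in the closure, for the seminorm $\rho$, of the rational subspace $W_\bQ=\Lambda_\Gamma\otimes\bQ$. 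Since $\rho$ is a continuous seminorm on the finite-dimensional space $V$, this closure is $W_\bR+\ker\rho$, where $W_\bR=\Lambda_\Gamma\otimes\bR$. Hence $\bar a\in W_\bR+\ker\rho$.

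Next we apply Lemma~\ref{lem:rational} to $\Lambda$. It says that $\ker\rho$ is spanned by a rational subspace $K_\bQ\subset\Lambda\otimes\bQ$. The sum $W_\bQ+K_\bQ$ is then a rational subspace whose real span contains the rational vector $\bar a$. A rational vector in the real span of a rational subspace already lies in that rational subspace, by linear algebra over $\bQ\subset\bR$. Therefore $\bar a=w+\kappa$ with $w\in W_\bQ$ and $\kappa\in K_\bQ$.

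Now we clear denominators. There is $n\geq1$ such that $[n]a=\gamma'+z'+t$, where $\gamma'\in\Lambda_\Gamma\subset\Gamma$, $z'\in\Lambda$ has image in $K_\bQ$ (so $\rho(z')=0$), and $t\in\Lambda_\tors$. Since $\Gamma$ contains all torsion points, we may absorb $t$ into $\gamma'$. Then $\hh_L(z')=0$ by \eqref{eq:compare}.

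The remaining step is to divide by $n$, and this is where the identity $[n]^{-1}\Gamma=\Gamma$ and the description \eqref{eq:zero} enter. By \eqref{eq:zero}, write $z'=t'+c$ with $t'$ torsion and $c\in H(k)$. Absorb $t'$ into $\gamma'$. The group $H(k)$ is divisible, since $H$ is a connected commutative algebraic group over the algebraically closed field $k$, so choose $c_0\in H(k)$ with $[n]c_0=c$. Then $[n](a-c_0)=\gamma'\in\Gamma$, so $\gamma:=a-c_0\in[n]^{-1}\Gamma=\Gamma$. Setting $z=c_0$, we have $\hh_L(z)=0$ by \eqref{eq:zero}, which gives $a=\gamma+z$.

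The main point requiring care is the closure step: the closure of $W_\bQ$ under $\rho$ equals $W_\bR+\ker\rho$. This follows by passing to the normed quotient $V/\ker\rho$, where all norms are equivalent and the image of $W_\bR$ is closed.
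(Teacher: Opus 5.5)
Your proof is correct and follows the paper's argument: a rational basis of $\Gamma\otimes_\bZ\bQ$, Lemma~\ref{lem:rational} applied to $\langle a,\gamma_1,\ldots,\gamma_r\rangle$, closedness of the span in the normed quotient by $\ker\rho$, a rational solution, and clearing denominators with $[n]^{-1}\Gamma=\Gamma$. The only difference is the final division by $n$. The paper divides the $\Gamma$-part: it chooses $\gamma$ with $[n]\gamma=\sum_i nc_ig_i$ and uses $\rho(a-\gamma)=\tfrac1n\rho([n]a-[n]\gamma)=0$. This avoids \eqref{eq:zero} and the divisibility of $H(k)$, so the proof carries over unchanged to positive characteristic, as \S\ref{sec:charp} uses; your route instead yields the slightly sharper conclusion $z\in H(k)$ directly.
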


\begin{proof}
    Choose $g_1,\ldots,g_r\in\Gamma$ whose images span $\Gamma\otimes_\bZ\bQ$, and apply Lemma~\ref{lem:rational} to the group generated by $a,g_1,\ldots,g_r$. Quotient its real scalar extension by the kernel of $\rho$. The approximation hypothesis places the image of $a$ in the closure of the span of the images of the $g_i$. This span is finite-dimensional, hence closed. Both it and the kernel of $\rho$ are rational. Solving the corresponding linear equations over $\bQ$ gives rational numbers $c_i$ such that
    \[
        \rho\Bigl(a-\sum_i c_i g_i\Bigr)=0
    \]
    in the rational scalar extension. Choose an integer $n\geq1$ with $nc_i\in\bZ$. Multiplication by $n$ is surjective on $G(\overline{K})$, so there is $\gamma\in G(\overline{K})$ with $[n]\gamma=\sum_i nc_i g_i$. Since $[n]\gamma\in\Gamma$ and $[n]^{-1}\Gamma=\Gamma$, we have $\gamma\in\Gamma$. Homogeneity of $\rho$ now yields $\rho(a-\gamma)=0$, and \eqref{eq:compare} finishes the proof.
\end{proof}

\section{Approximation with bounded heights}\label{sec:bounded}

We characterize approximation by a finite-rank subgroup when the approximating points have bounded height. Over $\overline\bQ$, R\'emond \cite[\S5, pp.~209--211]{Rem03} combines a finite covering argument with Bogomolov's theorem. Over a function field, the conclusion involves a constant subvariety of the trace. Retain $H$ and $\rho$ from Section~\ref{sec:heights}, and put $\Lambda_\epsilon=\Lambda+B_\epsilon(G)$ for a subgroup $\Lambda\subset G(\overline{K})$.

\begin{proposition}\label{prop:bounded}
    Let $\Lambda\subset G(\overline{K})$ be a subgroup of finite rank, and let $X\subset G_{\overline{K}}$ have finite stabilizer. The following are equivalent.
    \begin{enumerate}[beginpenalty=10000]

        \item There is $R\geq0$ such that, for every $\epsilon>0$, the set
        \[
            \{x\in X(\overline{K})\cap\Lambda_\epsilon:\hh_L(x)\leq R\}
        \]
        is dense in $X$.

        \item There are $a\in G(\overline{K})$ and a closed $k$-subvariety $Y\subset H$ such that
        \[
            X=a+Y_{\overline{K}},\qquad \inf_{\lambda\in\Lambda}\rho(a-\lambda)=0.
        \]
    \end{enumerate}
    If $[n]^{-1}\Lambda=\Lambda$ for every $n\geq1$, these conditions imply that $X$ is $\Lambda$-special.
\end{proposition}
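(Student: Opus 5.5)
We prove (2)$\Rightarrow$(1) directly, then (1)$\Rightarrow$(2) through difference maps, geometric Bogomolov and a height comparison, and finally deduce specialness from Corollary~\ref{cor:point}.

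\textbf{(2)$\Rightarrow$(1).} Assume $X=a+Y_{\overline K}$ with $Y\subset H$ and $\inf_\lambda\rho(a-\lambda)=0$. Constant points of $H$ have height zero by \eqref{eq:zero}, and $Y(k)$ is dense in $Y_{\overline K}$. Fix $\epsilon>0$ and choose $\lambda$ with $\rho(a-\lambda)$ small. For $y\in Y(k)$ we have $\rho(a+y-\lambda)\leq\rho(a-\lambda)+\rho(y)=\rho(a-\lambda)$. By \eqref{eq:compare}, the points $a+y$ lie in $\Lambda_\epsilon$. Their heights satisfy $\rho(a+y)\leq\rho(a)$, so they are bounded by a constant $R$ depending only on $a$. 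These points are dense in $X$, which gives (1).

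\textbf{(1)$\Rightarrow$(2), covering step.} Fix $m\geq2$. The set $\{\lambda\in\Lambda:\rho(\lambda)\leq R'\}$ lies in a bounded region of $\Lambda\otimes\bR$, where $\rho$ is a continuous seminorm. Since $\Lambda$ has finite rank, its image modulo $\ker\rho$ sits in a finite-dimensional normed space. Hence for any $\delta>0$ this set is covered by finitely many sets of $\rho$-diameter $<\delta$. A point of $X\cap\Lambda_\epsilon$ of height $\leq R$ is within small $\rho$ of some $\lambda$ of bounded $\rho$.

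Take a sequence $\epsilon_j\to0$ and use noetherianity: a dense set is not a finite union of non-dense sets. For each $\epsilon$ this produces one cell $C$ whose approximated points are dense in $X$. Then for $x_i$ in that family, the differences $x_i-x_1$ have $\rho<2\epsilon'+\delta$.

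Consequently the image $X_m$ of $d_m:X^m\to G^{m-1}$ contains a dense set of points of arbitrarily small height. Here we use density of $(\text{dense})^m$ in $X^m$, and the fact that the product height is a sum.

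\textbf{(1)$\Rightarrow$(2), Bogomolov step.} Since $\Stab(X)$ is finite, the stabilizer of $X_m$ is finite for $m$ large, as in Zhang's and Rémond's argument. Indeed, $\Stab(X_m)^0$ projects into stabilizers of the fibers $x_1$-translates, and a dimension count forces it to be trivial.

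Luo--Yu's geometric Bogomolov theorem then gives $X_m=t+h(W_{\overline K})$ with $t$ torsion and $W$ constant, where $h$ factors through the trace. Since $X_m$ contains the origin diagonal point, we may absorb $t$. Taking the fiber of $X_{m+1}\to X_m$, or simply fixing $x_1=a$, shows that $X-a$ is contained in the image of a constant variety in $H$. Hence $X=a+Y_{\overline K}$ with $Y\subset H$ closed over $k$ by descent. Integrality of $X$ gives integrality of $Y$.

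\textbf{(1)$\Rightarrow$(2), locating the translate (main obstacle).} It remains to show $\inf_\lambda\rho(a-\lambda)=0$. For $x=a+y$ we have $\rho(x-\lambda)\geq$ something controlling $\rho(a-\lambda)$, but only after killing the contribution of $y$. The contribution of $y$ vanishes since $\rho(y)=0$ for $y\in H(\overline K)$? This fails, because $y$ ranges over $\overline K$-points of $Y$, not $k$-points.

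I will use that $Y$ has finite stabilizer in $H$. Then a difference map $Y^m\to H^{m-1}$, defined over $k$, is generically finite. On a constant variety, a generically finite $k$-morphism gives $h(\text{source})\leq C\,h(\text{target})$ with no additive constant; this is the key point, using constant models. So small differences force the $y_i$ to have small height, uniformly on a dense set.

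Then $\rho(a-\lambda)\leq\rho(x-\lambda)+\rho(y)$ is small. This yields $\inf_\lambda\rho(a-\lambda)=0$.

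\textbf{Specialness.} Under $[n]^{-1}\Lambda=\Lambda$, Corollary~\ref{cor:point} gives $a=\gamma+z$ with $\hh_L(z)=0$. By \eqref{eq:zero}, $z$ is a torsion point plus a point of $H(k)$; the torsion part lies in $\Lambda$ and the $H(k)$ part is absorbed into $Y$. This gives \eqref{eq:special} with trivial stabilizer quotient, using $h$ equal to the inclusion of $H$.
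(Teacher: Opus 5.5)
Your route is the paper's: a finite $\rho$-covering of the bounded part of $\Lambda$ gives one dense cluster (Lemma~\ref{lem:clusters}), and differences give dense small points on the image of $d_m$. Luo--Yu then makes that image constant, a constant-model height inequality with no additive error (Lemma~\ref{lem:inverse}) locates the translate, and Corollary~\ref{cor:point} gives specialness. Your arguments for (2)$\Rightarrow$(1), for $\inf_\lambda\rho(a-\lambda)=0$, and for specialness are correct and match the paper. You also correctly see why $\rho(y)$ cannot simply be dropped.

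\textbf{The gap is in the descent step.} ``Fixing $x_1=a$'' only gives $X-a\subset X_2$. A $d$-dimensional subvariety of a constant variety need not be constant, so ``by descent'' does not follow. The choice of $a$ genuinely matters. If $X=b+Y_{\overline K}$ with $Y$ constant and finite stabilizer, then $X-a=(b-a)+Y_{\overline K}$ is constant only when $a\in b+Y(k)$; this follows by comparing $\operatorname{Aut}(\overline K/k)$-orbits with the finite stabilizer.

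The fiber argument you allude to must be carried out as in Lemma~\ref{lem:descent}:
\begin{enumerate}
\item You need generic finiteness of $d_m$ and $d_{m+1}$ on $X$ for $m>\dim G$. This is the first half of Lemma~\ref{lem:incidence}, which you invoke only later, for $Y$. It gives $\dim X_{m+1}=(m+1)d$ and $\dim X_m=md$.
\item Since $0\in X_m$, the torsion translate lies in $H^{m-1}$ and is therefore a $k$-point. Hence $X_m$ and $X_{m+1}$ are defined over $k$, and $X_m(k)$ is dense.
\item Let $p\colon X_{m+1}\to X_m$ forget the last difference. Choose $w\in X_m(k)$ in an open set contained in the image of $d_m$ over which $p$ has $d$-dimensional fibers. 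Take $a$ to be the first coordinate of a $d_m$-preimage of $w$.
\item Varying the last point gives $X-a\subset p^{-1}(w)$. By dimension, $X-a$ is then an irreducible component of this $k$-defined fiber, hence defined over $k$.
\end{enumerate}

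Separately, your justification that $\Stab(X_m)$ is finite is only a gesture; Zhang's lemma gives generic finiteness, not control of the stabilizer. The paper uses an incidence count instead. Put $v_1=0$. If $v\notin\Stab(X)^{m-1}$ stabilized $X_m$, a general tuple would admit $t$ with $x_i+t+v_i\in X$ for all $i$. Such incidences have dimension less than $m\dim X$, so they cannot dominate $X^m$.
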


\subsection{Difference maps}

Under condition~\textup{(1)}, we first show that a translate of $X$ is defined over $k$. The finite covering argument of \cite[Lemma~5.1(3)]{Rem03} produces a dense subset close to one element of $\Lambda$. Taking differences cancels that element and gives small points on the image of the difference morphism.

\begin{lemma}\label{lem:clusters}
    Assume condition~\textup{(1)} of Proposition~\ref{prop:bounded}. There are points $\lambda_n\in\Lambda$, positive numbers $b_n\to0$, and dense subsets $E_n\subset X(\overline{K})$ such that
    \[
        \rho(x-\lambda_n)\leq b_n\qquad(x\in E_n).
    \]
\end{lemma}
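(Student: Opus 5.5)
The idea is a finite covering argument in the finite-dimensional real space attached to $\Lambda$, followed by pigeonholing the dense sets. Fix $R$ as in condition~(1), and fix a sequence $\epsilon_n\to0$ with $\epsilon_n\leq1$. For each $n$, let $D_n$ be the dense set of points $x\in X(\overline K)\cap\Lambda_{\epsilon_n}$ with $\hh_L(x)\leq R$. For each $x\in D_n$, choose $\lambda_x\in\Lambda$ with $\hh_L(x-\lambda_x)<\epsilon_n$. By \eqref{eq:compare}, $\rho(x-\lambda_x)\leq\epsilon_n+\sqrt{\epsilon_n}=:c_n$, and $\rho(x)\leq R+\sqrt R$. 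The triangle inequality then gives $\rho(\lambda_x)\leq R+\sqrt R+1+1$, a bound independent of $n$. So all the $\lambda_x$ lie in the bounded $\rho$-ball
\[
    \Lambda^{\leq R'}=\{\lambda\in\Lambda:\rho(\lambda)\leq R'\},
\]
where $R'=R+\sqrt R+2$.

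Next I show that this bounded set is totally bounded for $\rho$. Since $\Lambda$ has finite rank, the image of $\Lambda$ in $V=\Lambda\otimes_\bZ\bR$ lies in a finite-dimensional real vector space, and $\rho$ vanishes on the torsion. The seminorm $\rho$ extends continuously to $V$, as explained in Section~\ref{sec:heights}. Pass to the normed quotient $V/\ker\rho$. It is finite-dimensional, so its closed balls are compact, and the image of $\Lambda^{\leq R'}$ is totally bounded. Hence, for every $\delta>0$, the set $\Lambda^{\leq R'}$ is covered by finitely many sets $\{\lambda:\rho(\lambda-\mu_j)\leq\delta\}$ with centers $\mu_j\in\Lambda$. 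The centers can be taken in $\Lambda$ itself: pick one element of $\Lambda^{\leq R'}$ in each nonempty piece and double $\delta$. The one subtlety is that $\rho(\lambda-\mu)$ depends only on the images in $V$, because torsion has $\rho=0$ and $\rho$ is a seminorm. That is what makes this reduction legitimate.

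Now take $\delta=\delta_n\to0$. The dense set $D_n$ is the finite union of the subsets $D_{n,j}=\{x\in D_n:\rho(\lambda_x-\mu_j)\leq\delta_n\}$. Since $X$ is irreducible, a finite union of subsets is dense only if one of them is dense, because the closures form a finite closed cover. So some $D_{n,j}$ is dense. Put $E_n=D_{n,j}$, $\lambda_n=\mu_j$ and $b_n=c_n+\delta_n$. The triangle inequality gives $\rho(x-\lambda_n)\leq\rho(x-\lambda_x)+\rho(\lambda_x-\lambda_n)\leq b_n$, and $b_n\to0$.

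The main point needing care is the total boundedness step, namely making sure that finite rank, and not finite generation, suffices. $V$ is finite-dimensional because $\dim_\bQ(\Lambda\otimes\bQ)<\infty$. The continuous extension of $\rho$ to $V$ only needs a rational basis, and elements of $\Lambda$ are rational combinations of that basis. The bounded part of the image of $\Lambda$ then sits inside a compact ball of the normed quotient, which yields the finite $\delta$-net. No discreteness is needed here, so Lemma~\ref{lem:rational} is not required for this lemma.
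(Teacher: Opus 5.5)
Your argument is correct and is essentially the paper's proof: a uniform bound on $\rho(\lambda)$ from \eqref{eq:compare}, a finite covering of the bounded image in the finite-dimensional normed quotient $(\Lambda\otimes_{\bZ}\bR)/\ker\rho$ with centers lifted to $\Lambda$, and irreducibility of $X$ to force one piece to be dense. The differences are only cosmetic: the paper fixes $\epsilon_n+\sqrt{\epsilon_n}\leq 1/n$ and takes $b_n=2/n$, while you keep $c_n$ and $\delta_n$ separate.
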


\begin{proof}
    Choose $\epsilon_n>0$ with $\epsilon_n+\sqrt{\epsilon_n}\leq1/n$. If $x\in X(\overline{K})\cap\Lambda_{\epsilon_n}$ and $\hh_L(x)\leq R$, choose $\lambda\in\Lambda$ with $\hh_L(x-\lambda)<\epsilon_n$. Then
    \[
        \rho(x-\lambda)\leq 1/n,\qquad \rho(\lambda)\leq R+\sqrt R+1.
    \]
    The points $\lambda$ satisfying the second inequality have bounded image in the finite-dimensional normed space $(\Lambda\otimes_\bZ\bR)/\ker\rho$. Cover this bounded image by finitely many balls of radius $1/n$ with centers in the image, and lift the centers to $\Lambda$. The dense set under consideration is then covered by finitely many sets of the form $\rho(x-\lambda)\leq2/n$. Irreducibility of $X$ forces one of these sets to be dense. Take it for $E_n$, its center for $\lambda_n$, and put $b_n=2/n$.
\end{proof}

For an integer $m\geq2$, define the difference morphism
\[
    \alpha_m:X^m\longrightarrow G^{m-1},\qquad
    (x_1,\ldots,x_m)\longmapsto(x_2-x_1,\ldots,x_m-x_1),
\]
and let $W_m$ be the closure of its image. Zhang \cite[Lemma~3.1]{Zha98} proves generic finiteness for sufficiently many factors in an abelian variety when the stabilizer is trivial. The following argument applies to semi-abelian varieties with finite stabilizer and also controls the stabilizer of $W_m$.

\begin{lemma}\label{lem:incidence}
    Let $m\geq2$ satisfy $m>\dim G$, and suppose that $X$ has finite stabilizer. Then
    \begin{enumerate}[beginpenalty=10000]

        \item $\alpha_m$ is generically finite onto $W_m$;

        \item $W_m$ has finite stabilizer.
    \end{enumerate}
\end{lemma}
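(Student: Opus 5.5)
For (1) I would argue one factor at a time, as a strictly decreasing sequence of fibre dimensions. The fibre of $\alpha_j$ through a point $(x_1,\ldots,x_j)\in X^j$ is
\[
\{(x_1+g,\ldots,x_j+g): g\in F_j\},\qquad F_j=\bigcap_{i\le j}(X-x_i),
\]
so the fibre is isomorphic to $F_j$. Let $f_j$ be the dimension of $F_j$ for a generic tuple, so $f_1=\dim X$. I claim that $f_{j+1}<f_j$ whenever $f_j>0$.

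To see this, fix $(x_1,\ldots,x_j)$ generic. Let $C$ be a positive-dimensional irreducible component of $F_j$. Then $C$ is defined over a finitely generated field $F$ containing the $x_i$. Choose $x_{j+1}\in X$ generic over $F$.

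Suppose $C\subset X-x_{j+1}$. The condition "$C\subset X-x$" is closed in $x$, and $x_{j+1}$ is generic over $F$, so $C\subset X-x$ for every $x\in X$. Hence $X+c\subset X$ for every $c\in C$. Since $X+c$ and $X$ are irreducible of the same dimension, this forces $X+c=X$. Fixing $c_0\in C$, the variety $C-c_0$ is positive-dimensional and lies in $\Stab(X)$, which contradicts finiteness of the stabilizer.

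So every positive-dimensional component of $F_j$ meets $X-x_{j+1}$ properly, and $f_{j+1}<f_j$. Starting from $f_1=\dim X\le\dim G$, we reach $f_m=0$ as soon as $m-1\ge\dim X$. This is guaranteed by $m>\dim G$. By upper semicontinuity, $\alpha_m$ is then generically finite onto $W_m$.

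For (2), let $S'$ be the connected stabilizer of $W_m$ and assume $\dim S'>0$. I would pull the action back to $X^m$. Put $U=\Delta(G)+(0\times S')\subset G^m$. The preimage of $W_m$ under the quotient $G^m\to G^m/\Delta(G)\simeq G^{m-1}$ is $Z=\overline{X^m+\Delta(G)}$, and $Z$ is $U$-stable. Since $\alpha_m$ is generically finite, $\dim Z=m\dim X+\dim G$.

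Next consider the projection $p:W_m\to W_{m-1}$ that forgets the last coordinate. For generic $w'\in W_{m-1}$, its fibre is a finite union of translates of $X$; this uses generic finiteness of $\alpha_{m-1}$, or of $\alpha_m$ combined with a count of dimensions. The connected part of $S'\cap(0^{m-2}\times G)$ permutes these translates. It therefore stabilizes each translate, so it lies in $\Stab(X)^0=0$. The base-point-change automorphisms of $G^{m-1}$, which permute the roles of the $x_i$, preserve $W_m$. By this symmetry, $S'$ meets every coordinate axis in a finite group.

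To conclude I would run a dimension count on the fibres of $p$ and induct on the number of factors. The image $p(S')$ stabilizes $W_{m-1}$, and I would peel off coordinates until the finite-stabilizer argument applies.

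I expect the main obstacle to be (2), specifically starting this induction. For small $j$ the variety $W_j$, for instance $W_2=\overline{X-X}$, can have a large stabilizer. So the induction cannot simply start at $j=2$; the hypothesis $m>\dim G$ has to be used again.

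My first attempt at the base step would be a direct orbit argument. For generic $x\in X^m$ and each $s\in S'$, stability gives $t_s\in G$ with $x_i+t_s+s_i\in X$ for all $i$, where $s_1=0$. This produces a subvariety $P_x$ of $x+U$ inside $X^m$ of dimension at least $\dim S'$. I would then apply the step-(1) argument to the coordinates of $P_x$: for $m>\dim G$ generic translates, $P_x$ must collapse unless a positive-dimensional subgroup stabilizes $X$.
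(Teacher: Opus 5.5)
Your proof of (1) is correct, but it takes a different route from the paper. You shrink $F_j=\bigcap_{i\le j}(X-x_i)$ one generic point at a time. A positive-dimensional component of $F_j$ contained in $X-x_{j+1}$ for generic $x_{j+1}$ would lie in $\Stab(X)$, so the dimension drops at each step. This even gives generic finiteness for $m>\dim X$. The paper instead makes one incidence count. The tuples $(x_1,\ldots,x_m,t)$ with $t\notin\Stab(X)(\overline{K})$ and all $x_i+t\in X$ form a set of dimension at most $\dim G+m(\dim X-1)<m\dim X$. Your generic-point language needs a universal domain or a rephrasing with dense open sets, but that is routine.

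Part (2) has a genuine gap.

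\emph{The descent through $p$.} The projection $p:W_m\to W_{m-1}$ only transfers finiteness upward. If $\Stab(W_{m-1})$ were known to be finite, the connected group $p(S')$ would be trivial. Then $S'\subset 0^{m-2}\times G$, and your fibre argument would show $S'$ is trivial. Descending, however, gives no contradiction, because $W_j$ for small $j$ can have a positive-dimensional stabilizer. You note this yourself; for a curve with finite stabilizer in a surface $G$, one has $W_2=G$.

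\emph{The coordinate axes.} Finiteness of $S'$ on each coordinate axis does not bound $S'$, since a diagonal-type subgroup meets every axis finitely.

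\emph{The orbit argument.} Everything therefore rests on your last paragraph, and its decisive step is only announced. The argument from (1) does not transfer to $P_x$. Suppose a positive-dimensional component $C$ of $\{(t,s)\in G\times S':x_i+t+s_i\in X,\ i\le j\}$ survives the next generic condition. You only learn that $t+s_{j+1}$ is a constant element of $\Stab(X)$ on $C$. That is no contradiction, and the dimension need not drop.

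The missing idea is to test one stabilizing vector at a time by a single incidence count. Let $v=(v_2,\ldots,v_m)$ stabilize $W_m$, put $v_1=0$ and $\Phi=\Stab(X)(\overline{K})$, and suppose $v\notin\Phi^{m-1}$. As you note, a general tuple admits some $t$ with $x_i+t+v_i\in X$ for all $i$. So the incidence of such $(x_1,\ldots,x_m,t)$ dominates $X^m$. Split this incidence according to whether $t$ lies in the finite set $\bigcup_i(\Phi-v_i)$.
\begin{itemize}
\item For $t$ outside that set, every $X\cap(X-t-v_i)$ has dimension at most $\dim X-1$. So this part has dimension at most $\dim G+m(\dim X-1)<m\dim X$.
\item For each $t$ in the set, some $t+v_i\notin\Phi$: otherwise $t=t+v_1\in\Phi$, and then every $v_i\in\Phi$. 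So the fibre over such $t$ has dimension at most $m\dim X-1$.
\end{itemize}
Neither part dominates $X^m$, which is a contradiction. Hence $\Stab(W_m)(\overline{K})\subseteq\Phi^{m-1}$. This is where $m>\dim G$ enters (2), and it makes the induction on the number of factors unnecessary.
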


\begin{proof}
    If $X$ is a point, both assertions are immediate. Assume $d=\dim X>0$, and put $g=\dim G$ and $F=\Stab(X)(\overline{K})$. For $t\notin F$, the distinct irreducible varieties $X$ and $X-t$ satisfy
    \[
        \dim\bigl(X\cap(X-t)\bigr)\leq d-1.
    \]
    The incidence of tuples $(x_1,\ldots,x_m,t)$ with $t\notin F$ and $x_i+t\in X$ for every $i$ has dimension at most
    \[
        g+m(d-1)<md.
    \]
    It cannot dominate $X^m$. Two tuples in a fiber of $\alpha_m$ differ by one common translation. A general tuple thus admits only translations in the finite group $F$, proving~(1).

    Let $v=(v_2,\ldots,v_m)$ stabilize $W_m$, and put $v_1=0$. The constructible image of $\alpha_m$ contains a dense open subset of $W_m$. Therefore, for a general tuple $(x_i)\in X^m$, the point $\alpha_m((x_i))+v$ has a preimage. Equivalently, there is $t\in G(\overline{K})$ such that
    \[
        x_i+t+v_i\in X\qquad(1\leq i\leq m).
    \]
    Suppose that $v\notin F^{m-1}$. Outside the finite set $\bigcup_i(F-v_i)$, every intersection $X\cap(X-t-v_i)$ has dimension at most $d-1$, so the corresponding incidence again has dimension at most $g+m(d-1)$. For a point $t$ in that finite set, at least one $t+v_i$ lies outside $F$: otherwise $t\in F$ because $v_1=0$, and then all $v_i\in F$. The incidence over each such $t$ consequently has dimension at most $md-1$. Neither part dominates $X^m$, a contradiction. Thus $\Stab(W_m)(\overline{K})\subseteq F^{m-1}$, proving~(2).
\end{proof}

For sufficiently large $m$, finite stabilizer and geometric Bogomolov \cite[Theorem~1.1]{LY2025Bogomolov} make $W_m$ constant when it contains a dense set of small points. A fiber of $W_{m+1}\to W_m$ over a constant point then recovers a translate of $X$.

\begin{lemma}\label{lem:descent}
    Suppose that $X$ has finite stabilizer and every $W_m$ contains a dense set of small points. Then $X=a+Y_{\overline{K}}$ for some $a\in G(\overline{K})$ and some closed $k$-subvariety $Y\subset H$.
\end{lemma}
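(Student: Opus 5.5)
Fix $m\ge2$ with $m>\dim G$. By Lemma~\ref{lem:incidence}, $W_m\subset G^{m-1}$ has finite stabilizer, and by hypothesis it contains a dense set of small points for the product height $\hh_{L\boxtimes\cdots\boxtimes L}$. So I will apply geometric Bogomolov \cite[Theorem~1.1]{LY2025Bogomolov} to $W_m$ in the semi-abelian variety $G^{m-1}$. Since the connected stabilizer is trivial, the stabilizer quotient is $G^{m-1}$ itself. The theorem then gives a torsion point $\tau\in G^{m-1}$, a constant semi-abelian variety, a homomorphism with finite kernel, and a $k$-subvariety, such that $W_m=\tau+h(Y_{m,\overline K})$.

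By the trace universality recalled before \eqref{eq:zero}, $h$ factors through the trace of $G^{m-1}$. That trace is the product of copies of the trace of $G$, and its image is $H^{m-1}$. Hence $h(Y_{m,\overline K})=Y'_{\overline K}$ for a closed $k$-subvariety $Y'\subset H^{m-1}$, where finiteness of the trace map and characteristic zero let the image be defined over $k$. After translating, I may assume $W_m-\tau$ is a constant subvariety of $H^{m-1}$.

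Now run the same argument with $m+1$ and consider the projection $p:W_{m+1}\to W_m$ forgetting the last coordinate $x_{m+1}-x_1$. Both $W_{m+1}-\tau'$ and $W_m-\tau$ are constant, contained in $H^{m}$ and $H^{m-1}$. Since $\tau'$ maps to $\tau$ up to torsion, I will arrange compatible torsion points by enlarging: for the last coordinate, any torsion in $G$ can be absorbed into $a$. Thus $p$ is, up to a torsion translation, the base change of a $k$-morphism of constant varieties. Choose a general $k$-point $w\in (W_m-\tau)(k)$, which is possible since $k=\overline k$ and $k$-points are dense in a constant variety. Over the general point $w+\tau=\alpha_m(x_1,\dots,x_m)$, the fiber of $p$ is $\{x_{m+1}-x_1:x_{m+1}\in X\}=X-x_1$, at least on a dense open set of fibers. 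On the other hand, this fiber is a fiber of the constant map over a $k$-point, translated by torsion, hence of the form $c+Y_{\overline K}$ with $Y\subset H$ a closed $k$-subvariety and $c$ torsion. Therefore $X=(x_1+c)+Y_{\overline K}$, and we take $a=x_1+c$.

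The main obstacle is making the fiber identification rigorous. The fiber of $W_{m+1}\to W_m$ over a point of the image of $\alpha_m$ could a priori be larger than $X-x_1$ because of closure, and a general $k$-point must be shown to lie in the good open set where $\alpha_m$ has preimage with controlled fibers. I would handle this in three steps. First, use generic finiteness from Lemma~\ref{lem:incidence}(1): over a dense open $V\subset W_m$, the preimages under $\alpha_m$ are finite sets of tuples differing by translations in the finite stabilizer $F$. Second, observe that the fiber of the closure $W_{m+1}$ over a point of $V$ is then a union of finitely many translates $X-x_1-f$ with $f\in F$, possibly together with lower-dimensional pieces. Shrinking $V$ by generic flatness of $p$ removes those lower-dimensional pieces. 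Third, since $V$ is a dense open of a constant variety translated by torsion, it contains a translated $k$-point. The resulting fiber is a finite union of translates of $X$ equal to a constant variety translated by torsion. Taking an irreducible component gives the required form for a translate of $X$.
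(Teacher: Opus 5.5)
Your proposal is correct and follows the paper's proof essentially step for step: geometric Bogomolov for $W_m$ and $W_{m+1}$ (finite stabilizer from Lemma~\ref{lem:incidence}), descent of the images into $H$ through the trace, and identification of a translate of $X$ with an irreducible component of the fiber of $W_{m+1}\to W_m$ over a $k$-point lying in a good open set. Your ``compatible torsion points'' step is vague, and the paper avoids it: diagonal tuples give $0\in W_m$, so the torsion translate lies in $H^{m-1}$, is therefore $k$-rational, and $W_m$ and $W_{m+1}$ are literally defined over $k$. In your setup the same observation says that $\tau-p(\tau')$ is a torsion point of $H^{m-1}$, hence lies in $H^{m-1}(k)$, so $w+\tau$ is a $k$-point for the constant structure of $p(W_{m+1}-\tau')$.
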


\begin{proof}
    Choose $m>\dim G$ with $m\geq2$. Lemma~\ref{lem:incidence} gives a finite stabilizer for $W_m$. Since the trace of a product is the product of the traces, \cite[Theorem~1.1]{LY2025Bogomolov} makes $W_m$ a torsion translate of a constant subvariety of $H^{m-1}$. Diagonal tuples show that $0\in W_m$, so the translating torsion point lies in $H^{m-1}$. It is constant in characteristic zero. Hence $W_m$ is defined over $k$. The same holds for $W_{m+1}$.

    The coordinate projections of $W_{m+1}$ contain all differences $x-x'$ with $x,x'\in X(\overline{K})$. Hence $X-X\subset H$. Consider the projection
    \[
        p:W_{m+1}\longrightarrow W_m
    \]
    which forgets the last difference. Put $d=\dim X$. By Lemma~\ref{lem:incidence}, its source and target have dimensions $(m+1)d$ and $md$. There is a nonempty open subset of $W_m$ contained in the image of $\alpha_m$, over which the fibers of $\alpha_m$ are finite and the fibers of $p$ have dimension $d$.

    The set $W_m(k)$ remains dense after extension to $\overline K$, since $k$ is algebraically closed. Thus this open subset contains a point $w\in W_m(k)$ even though it need not be defined over $k$. Choose a preimage under $\alpha_m$ and denote its first coordinate by $a$.

    Varying the last point in $X^{m+1}$ shows that $X-a$ is contained in the fiber $p^{-1}(w)$, viewed in its last coordinate in $H$. It is closed, irreducible, and of dimension $d$, so it is an irreducible component of that fiber. The fiber is defined over the algebraically closed field $k$, and each of its geometric irreducible components is defined over $k$. This proves the assertion.
\end{proof}

\subsection{The translating point}

To locate the translating point, we must deduce small heights on $Y$ from small heights of differences. The usual inequality for a generically finite map has an additive error; see \cite[Theorem~1]{Sil11}. Since the heights here tend to zero, we need to remove that error. The argument using a nonzero section in \cite[\S1]{Sil11} does so on models defined over $k$.

\begin{lemma}\label{lem:inverse}
    Let $H$ be a semi-abelian variety over $k$, and let $Y\subset H$ have finite stabilizer. Choose a standard compactification $\overline H$ of $H_{\overline{K}}$ and a line bundle $L_H=M_H+\overline\pi_H^*N_H$, where $\overline\pi_H$ extends the map from $H_{\overline{K}}$ to its abelian quotient, $M_H$ is the boundary line bundle, and $N_H$ is symmetric and ample on that quotient. There are an integer $r\geq2$, a nonempty open subset $U\subset Y^r$ defined over $k$, and $C>0$ such that
    \begin{equation}\label{eq:inverse}
        \sum_{i=1}^r\hh_{L_H}(y_i)\leq C\sum_{i=2}^r\hh_{L_H}(y_i-y_1)
        \qquad((y_1,\ldots,y_r)\in U(\overline{K})).
    \end{equation}
\end{lemma}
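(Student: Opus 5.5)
Take $r\geq2$ with $r>\dim H$. By Lemma~\ref{lem:incidence}, applied over $k$, the difference morphism
\[
    \alpha_r:Y^r\to H^{r-1},\qquad (y_i)\mapsto(y_i-y_1)_{i\geq2},
\]
is generically finite onto its image closure $W$, and $W$ has finite stabilizer. Everything here is defined over $k$. The plan is to compare the two sides of \eqref{eq:inverse} as heights of two line bundles on a $k$-model of $Y^r$. We then use that geometric heights attached to constant varieties and constant, everywhere-nef metrics have no additive error.

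Over $k$, take the standard compactification $\overline H$, with boundary $M_H$ and ample $N_H$ on the abelian quotient. Let $\mathcal L_1=\boxtimes_i L_H$ on $\overline H^r$. Let $\mathcal L_2$ be the pullback of $\boxtimes L_H$ on $\overline H^{r-1}$ under a rational extension of $\alpha_r$. Resolve this by a $k$-projective modification $\widetilde Y\to\overline{Y^r}$ on which $\alpha_r$ extends to a morphism $\widetilde\alpha$ to $\overline H^{r-1}$. The heights over $K$ of points of $Y^r$ with respect to line bundles defined over $k$ and equipped with the trivial (constant) metric are given by intersection with the constant model $\widetilde Y\times\mathcal B$. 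The canonical heights $\hh_{L_H}$ differ from these constant-model heights by zero. The reason is that on a constant semi-abelian variety, the constant model is invariant under $[n]$, so Tate's limit is attained with no error. Both sides of \eqref{eq:inverse} are therefore exact heights $h_{\mathcal L_1}$ and $h_{\widetilde\alpha^*\mathcal L_2}$ computed on constant models.

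Since $\widetilde\alpha$ is generically finite and $\mathcal L_2$ is big on $W$, the pullback $\widetilde\alpha^*\mathcal L_2$ is big on $\widetilde Y$. Hence for some integer $C$, the bundle $C\widetilde\alpha^*\mathcal L_2-\mathcal L_1$ has a nonzero section $s$ over $k$; this is the section argument of \cite[\S1]{Sil11}. Let $U$ be the complement of the zero locus of $s$ and of the exceptional locus of $\widetilde Y\to\overline{Y^r}$, intersected with $Y^r$. For $y\in U(\overline K)$, the section $s$ is constant and nonvanishing at the point. Intersecting its divisor, a constant effective divisor, with the horizontal section given by $y$ yields a nonnegative contribution. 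This gives $h_{\mathcal L_1}(y)\leq C\,h_{\widetilde\alpha^*\mathcal L_2}(y)$ with no additive constant. The absence of the $O(1)$ is exactly because the metric induced by $s$ is constant: the local contributions $-\log|s|_v$ are nonnegative integers supported on the intersection of the section with the divisor of $s$.

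The main obstacle is the middle step: justifying that the canonical heights of both $L_H$-type bundles coincide exactly with the constant-model geometric heights. This is clear for the $N_H$-part, since the Néron--Tate height of a constant abelian variety is the intersection height on the constant model by $[n]^*N_H\simeq n^2N_H$ over $k$. For the boundary $M_H$, the corresponding fact is that $[n]^*D=nD$ on the constant toric compactification over $k$. The same identity must also be checked on the pullback through the blow-up $\widetilde Y$, which holds by functoriality of model heights. If making this fully rigorous proves troublesome, I would instead first reduce to showing that both sides are model heights of nef $k$-line bundles, where effectivity of $C\widetilde\alpha^*\mathcal L_2-\mathcal L_1$ off its base locus suffices.
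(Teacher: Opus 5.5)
Your proposal is correct and follows essentially the paper's proof. You use generic finiteness of the difference map from Lemma~\ref{lem:incidence} and a nonzero $k$-section of $C\widetilde\alpha^*\mathcal L_2-\mathcal L_1$ obtained from bigness, removing its zero locus and the locus where the modification is not an isomorphism; exactness of constant-model heights (from $[n]^*D=nD$ and $[n]^*N_H\simeq N_H^{\otimes n^2}$ over $k$) then turns the effective constant divisor into \eqref{eq:inverse} with no additive error. The one point you pass over is that $\overline H$ and $N_H$ are given over $\overline K$; the paper first reduces to data defined over $k$ by height comparison, which only changes $C$.
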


\begin{proof}
    Height comparison reduces the assertion to line bundles defined over $k$. Choose $r>\dim H$ with $r\geq2$. Compactify $Y^r$ and the image of its difference morphism over $k$, and let $Z$ be the closure of the graph of the difference map. Let $p$ and $f$ be the projections to these compactifications, respectively. Let $\mathcal L$ and $\mathcal N$ be the restrictions of the external products of the ample line bundles defining the heights on the corresponding powers of a standard compactification of $H$ defined over $k$. On the open parts, their geometric model heights are the sums in \eqref{eq:inverse}.

    By Lemma~\ref{lem:incidence}, $f$ is generically finite, so $f^*\mathcal N$ is big. The characterization of big line bundles in \cite[Corollary~2.2.7]{Laz04} gives positive integers $C,b$ and a nonzero section of
    \[
        (f^*\mathcal N)^{\otimes bC}\otimes(p^*\mathcal L)^{\otimes(-b)}
    \]
    over $k$. Remove the zero divisor of this section and the loci where the graph projections do not represent the original morphism. This gives a nonempty open subset $U\subset Y^r$ defined over $k$.

    Geometric model heights on constant models are exactly additive and functorial. They are nonnegative outside an effective constant divisor: pull its section back along the rational map from a normal model of a finite extension of $K$ defining the point. Properness extends this map at codimension-one points, where the resulting divisor is effective and has nonnegative polarized degree. Applying this to the section above gives
    \[
        bC\,h_{\mathcal N}(f(z))-b\,h_{\mathcal L}(p(z))\geq0
    \]
    for points above $U$. The constant model heights of the standard bundles equal their canonical heights: the multiplication identities already hold on those models. Dividing by $b$ yields \eqref{eq:inverse}.
\end{proof}

\begin{proof}[Proof of Proposition~\ref{prop:bounded}]
    Assume~(1), and use the sets $E_n$, centers $\lambda_n$, and numbers $b_n$ of Lemma~\ref{lem:clusters}. For every fixed $m$, the set $E_n^m$ is dense in $X^m$, and its differences satisfy
    \[
        \rho(x_i-x_1)\leq2b_n.
    \]
    Their image is dense in $W_m$. By \eqref{eq:compare}, the product height of each image point is at most $(m-1)(2b_n+4b_n^2)$. Consequently $W_m$ contains a dense set of small points. Lemma~\ref{lem:descent} gives $X=a+Y_{\overline{K}}$ with $Y\subset H$ defined over $k$.

    The stabilizer of $Y$ is finite. Choose the line bundle $L_H$ defining the height on $H$ and choose $r,U,C$ as in Lemma~\ref{lem:inverse}. For each $n$, density of $E_n^r$ gives a tuple $(x_1,\ldots,x_r)$ whose translate by $(-a,\ldots,-a)$ belongs to $U$. Write $x_i=a+y_i$ and $z_i=x_i-\lambda_n$; these points depend on $n$. Then
    \[
        y_i-y_1=z_i-z_1,\qquad \rho(z_i-z_1)\leq2b_n.
    \]
    By \eqref{eq:compare}, these differences have $\hh_L$-height at most $2b_n+4b_n^2$. Height comparison for $H\hookrightarrow G$ gives $\hh_{L_H}(y_i-y_1)\to0$. Since the constants in \eqref{eq:inverse} are fixed, we obtain $\hh_{L_H}(y_1)\to0$. Since
    \[
        a-\lambda_n=z_1-y_1,
    \]
    the triangle inequality, the bound $\rho(z_1)\leq b_n$, and height comparison give $\rho(a-\lambda_n)\to0$. This proves~(2).

    Conversely, assume~(2). The set $a+Y(k)$ is dense in $X$. Every $y\in Y(k)$ has height zero, so $\rho(a+y)=\rho(a)$, by applying the triangle inequality in both directions. Given $\epsilon>0$, choose $\lambda\in\Lambda$ with $\rho(a-\lambda)$ small enough that $\rho(a-\lambda)+\rho(a-\lambda)^2<\epsilon$. Then, for every $y\in Y(k)$,
    \[
        \hh_L(a+y-\lambda)\leq\rho(a-\lambda)+\rho(a-\lambda)^2<\epsilon.
    \]
    Also $\hh_L(a+y)\leq\rho(a)+\rho(a)^2$, so~(1) holds with $R=\rho(a)+\rho(a)^2$.

    Finally, if $[n]^{-1}\Lambda=\Lambda$ for every $n\geq1$, Corollary~\ref{cor:point} gives $a=\lambda+z$ with $\lambda\in\Lambda$ and $\hh_L(z)=0$. By \eqref{eq:zero}, write $z=\tau+c$ with $\tau$ torsion and $c\in H(k)$. Absorb $\tau$ into $\lambda$, since $\Lambda$ contains all torsion, and replace $Y$ by $c+Y$. This gives the required special form.
\end{proof}

\section{A Vojta inequality over function fields}\label{sec:vojta}

For a closed subvariety $V\subset G_{\overline{K}}$, let $Z_V$ be the union of the translates of positive-dimensional semi-abelian subvarieties contained in $V$. This is the \emph{Mordell exceptional locus} of \cite[\S0]{Abr94}. By \cite[Theorems~1 and~2]{Abr94}, it is closed, and $Z_V=V$ implies $\dim\Stab(V)>0$.

The following theorem is the function-field analogue of \cite[Theorem~4.1]{Rem03}. It excludes tuples outside $Z_V$ whose heights grow rapidly and whose normalized differences are small. The two normalizations reflect the linear growth of the boundary height and the quadratic growth of the abelian height under multiplication.

\begin{theorem}\label{thm:vojta}
    Suppose that $\dim V\geq1$, and put $m=\dim V+1$. There are positive constants $c_1,c_2,c_3$, depending on $G,V$, the line bundles $M,N$, and the fixed polarization, such that no tuple $(x_1,\ldots,x_m)\in(V\setminus Z_V)(\overline{K})^m$ satisfies all of
    \begin{enumerate}[beginpenalty=10000]

        \item $\hh_L(x_1)\geq c_3$ and $\hh_L(x_{i+1})\geq c_2^2\hh_L(x_i)$ for $1\leq i<m$;

        \item for $1\leq i<m$,
        \begin{align*}
            \hh_M\left(\frac{x_i}{\hh_L(x_i)}-\frac{x_{i+1}}{\hh_L(x_{i+1})}\right)&\leq c_1^{-1},\\
            \hh_N\left(\frac{\pi(x_i)}{\sqrt{\hh_L(x_i)}}-\frac{\pi(x_{i+1})}{\sqrt{\hh_L(x_{i+1})}}\right)&\leq c_1^{-2}.
        \end{align*}
    \end{enumerate}
    The expressions in \textup{(2)} are taken in the real scalar extension of the group generated by the points; the heights extend there by homogeneity and continuity. The constants are independent of the finite extensions of $K$ containing the points.
\end{theorem}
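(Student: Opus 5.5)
I will follow the Faltings--Vojta--Rémond scheme, adapted to geometric heights as outlined in the plan of proof: first prove the statement when $\operatorname{trdeg}(K/k)=1$, so the polarization is a curve, then reduce to that case by cutting $\mathcal B$ with a generic complete-intersection curve.

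Over a curve, argue by contradiction. Suppose a tuple $(x_1,\ldots,x_m)$ satisfies (1) and (2). Embed $V^m$ in the compactification $\overline G^m$ and choose integer weights $s_i$ adapted to the two homogeneities. On the boundary factors use weights proportional to $1/\hh_L(x_i)$. On the abelian factors use weights proportional to $1/\sqrt{\hh_L(x_i)}$. With these weights, consider the line bundle on $\overline G^m$
\[
    \mathcal L_s=\sum_i s_i^2\,\mathrm{pr}_i^*\overline\pi^*N-\sum_{i<m} s_is_{i+1}\,P_{i,i+1}+\sum_i s_i\,\mathrm{pr}_i^*M+\delta\,(\text{ample}),
\]
where $P_{i,i+1}$ is the Poincaré-type mixed term for $\overline\pi^*N$. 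The boundary mixed terms are handled by the triangle inequality for $\hh_M$. The conditions in (2) say that the $\mathcal L_s$-height of the tuple is small relative to $\sum_i s_i^2\hh_N(\pi(x_i))+s_i\hh_M(x_i)$. This is the Mumford-type computation in Rémond's proof. The gap condition (1) with $c_2$ large makes $\mathcal L_s$ big on $V^m$, with a controlled top self-intersection.

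Siegel's lemma over the function field then gives a small section of $\mathcal L_s^{\otimes D}$ with large $D$ on a model of $V^m$. The height of this section is $o(D)$ once the weights and the field $F$ of definition are fixed. Taking $D\to\infty$ removes the genus term, which is why the constants are independent of $F$.

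Next I construct the auxiliary section with high index at the point and compare heights. The index is measured by Samuel multiplicities of derivatives. This yields a lower bound for the index in terms of the small $\mathcal L_s$-height.

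Then comes the product theorem, which is the step I expect to be the main obstacle. Faltings' product theorem, or its geometric version via Rémond's effective form, says that large index forces a product subvariety $Z_1\times\cdots\times Z_m$ with controlled degree and height to contain the point. One must then descend to $\prod Z_i$ of smaller dimension while keeping the leading constants uniform in the weights. The recursion in the product factors must be closed by induction on $\dim V$. Points outside $Z_V$ are needed so that a factor $Z_i$ with positive-dimensional stabilizer cannot contain $x_i$. This is where Abramovich's results on $Z_V$ enter, together with a stabilizer-quotient argument.

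I would first try to copy Rémond's §§2--4 verbatim. In that transcription, arithmetic degrees become geometric intersection numbers on $\mathcal B$-models, and archimedean contributions disappear. I would then check that every constant is quadratic in the degree data only.

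Finally, for general transcendence degree, use a Bertini-type argument. A generic complete intersection curve $C\subset\mathcal B$ cut by sections of $\mathcal H$ has the property that restricting models to $C$ multiplies normalized geometric heights by a fixed factor $\mathcal H^{\dim\mathcal B}$-ratio, for all points over all finite extensions. This holds because the intersection numbers $\mathcal H_F^{\dim\mathcal B-1}\cdot D$ are computed on $C$. Hence the curve case, with rescaled constants, implies the theorem.
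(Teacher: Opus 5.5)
Your global architecture matches the paper's: the curve case first, Siegel's lemma with the auxiliary degree tending to infinity for a fixed field and fixed weights, then the index, the product theorem, and finally slicing by a generic complete-intersection curve. However, two load-bearing steps fail as you describe them.

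\textbf{The boundary part of $\mathcal L_s$.} With the positive term $\sum_i s_i\,\mathrm{pr}_i^*M$, the $\mathcal L_s$-height of the tuple contains $\sum_i s_i\hh_M(x_i)$. That is the entire boundary part of the main term you compare against. Hypothesis (2) on $\hh_M$ of normalized differences therefore never enters. When $A=0$ your bundle is simply ample, so no contradiction can arise.
\begin{itemize}
\item The triangle inequality for $\hh_M$ bounds differences from above, which is the wrong direction.
\item The homogeneities are also off: the boundary must enter through $s_i^2x_i$, so that it is quadratic in the weights like the abelian part.
\end{itemize}
What is needed is the pullback of $M^{\boxtimes(m-1)}$ along the weighted difference map $(x_i)\mapsto(a_i^2x_i-a_{i+1}^2x_{i+1})_{i<m}$, together with a multiple of the pullback of $N^{\boxtimes(m-1)}$ along $(a_i\pi(x_i)-a_{i+1}\pi(x_{i+1}))_{i<m}$. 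Call this bundle $\mathcal M_a$. The boundary difference map does not extend to $\overline G^m$, so one works on R\'emond's graph compactification, not on $\overline G^m$ as your explicit $\mathcal L_s$ does. One must then prove two estimates, with constants independent of the integer weights and of the field:
\begin{itemize}
\item a coefficient bound for the section maps that is linear in $\sum_i a_i^2$, obtained from polynomials representing $[r]$ with coefficient height $O(r^2)$;
\item a Weil-height comparison: the $\mathcal M_a$-height of the tuple exceeds the sum of the canonical heights of the weighted differences by at most $O(\sum_i a_i^2)$, obtained by a duplication argument.
\end{itemize}
Only then does taking $a_i\approx\nu/\sqrt{\hh_L(x_i)}$ and letting $\nu\to\infty$ turn the integer-weight inequality into a contradiction with (2).

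\textbf{The descent.} Induction on $\dim V$ cannot close the argument. The statement concerns a single tuple with field-independent constants. Once the product theorem places $x$ in $\prod_iZ_i$, each $Z_i$ carries only one point of the tuple, so the statement for $Z_i$ (which needs $\dim Z_i+1$ points) is unavailable. A Faltings-style finiteness induction gives no uniform constants either.

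The paper instead keeps the tuple fixed. It chooses a product $Y=\prod_iY_i\ni x$ of least dimension subject to recursively defined bounds on $\prod_i\deg Y_i$ and on a weighted sum of the heights $h(Y_i)$. A large index, or a vanishing discriminant, yields a smaller admissible product. Hence the index is small and the height inequality follows.

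This requires positivity on every such $Y$, uniformly in the weights, not merely bigness on $V^m$: one needs $(\mathcal M_a^{u}\cdot\widetilde Y)\ge\prod_ia_i^{2\dim Y_i}$ with $u=\dim Y$. This is where $x_i\notin Z_V$ is used. No $Y_i\cap G$ lies in $Z_V$, so for $m=\dim V+1$ the difference map on $\prod_i(Y_i\cap G)$ is generically finite by R\'emond's lemma. That gives positivity for unit weights, and Vojta's homogeneity theorem transfers it to all weights. This is not a statement about stabilizers of the $Z_i$. Likewise, the gap condition (1) serves the degree ratios in the product estimate, not bigness.

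Two smaller points:
\begin{itemize}
\item The auxiliary section has height $O(D)$, linear in the weights; only the genus term is $o(D)$.
\item The slicing curve must be the generic one over an algebraic closure $k'$ of the parameter field, so that $F\otimes_Kk'(C)$ is a field for every finite $F/K$. You also need to check that points outside $Z_V$ stay outside after this extension of constants.
\end{itemize}
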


We first work over the function field of a curve. Proposition~\ref{prop:geometric-comparison}, applied to the semi-abelian difference maps, gives an inequality with integer weights. Approximating inverse square roots of the point heights by ratios of positive integers then gives the two normalized differences in the theorem; the boundary differences use the squares of these integers. The comparison must therefore have constants independent of the weights and the fields containing the points. We prove it by auxiliary sections, differentiation, and an index estimate, as in \cite[\S\S3--5]{Yua25}, with degree and height bounds for the smaller product factors produced by the index estimate. Sections~\ref{sec:mixed-completion} and~\ref{sec:vojta-completion} give the semi-abelian application and the passage to higher transcendence degree.

\subsection{Heights over a curve}\label{sec:aux-vector}

Suppose $K=k(C)$, where $C$ is a smooth projective curve over $k$. For a finite extension $F/K$, let $C_F$ be the smooth projective curve with function field $F$, equipped with its finite morphism to $C$. The absolute values $|a|_v=\exp(-\operatorname{ord}_v(a))$, indexed by the closed points $v$ of $C_F$, are trivial on $k^*$ and satisfy the product formula. For $z=(z_0:\cdots:z_N)\in\mathbb P^N(F)$, put
\[
    h(z)=\frac{1}{[F:K]}\sum_v\log\max_j|z_j|_v.
\]
This is the height $h_{\mathcal O(1)}$ for the constant model of the hyperplane line bundle. The product formula makes it independent of the homogeneous coordinates, and the normalization makes it invariant under finite extension of $F$.

For a nonzero polynomial, $h$ denotes the height of its coefficient vector. For a finite family of polynomials, it denotes the height of the joint coefficient vector, whose $v$-adic norm is the largest absolute value of any coefficient in the family. For a projective variety $Y$, let $h(Y)$ be its Chow height, divided by $[F:K]$ but not by $\deg Y$. Equivalently, this is the corresponding hyperplane intersection on the closure over $C_F$. These projective heights are distinct from the canonical height $\hh_L$ on $G$.

\subsection{A uniform height comparison}\label{sec:aux-limit}

The following proposition compares the height of an auxiliary line bundle with a weighted sum of projective heights. It is the function-field analogue of \cite[Theorem~1.2]{Rem05} with $\omega=0$; \cite[Theorem~1.1]{Dil20} treats different projective factors over $\overline{\bQ}$. All heights use the normalization of Section~\ref{sec:aux-vector}.
    \begin{proposition}\label{prop:geometric-comparison}
        Let $X\subset\mathbb P^N$ be an integral projective variety of positive dimension, and let $L_X=\mathcal O_X(1)$. Fix integers $m\geq2$, $t_1,t_2,r_0,\theta\geq1$ and a real number $\delta\geq1$. For a tuple $a=(a_1,\ldots,a_m)$ of positive integers, put $|a|=\sum_i a_i$. Let $\pi:\mathcal X\to X^m$ be projective and birational, let $\mathcal P$ be very ample on $\mathcal X$, and put $\mathcal N_a=\pi^*(\boxtimes_i L_X^{a_i})$. Let $\mathcal M$ be a line bundle on $\mathcal X$, and let $\Sigma$ be a family of $r_0$ global sections of $\mathcal P\otimes\mathcal M^{-1}$. Suppose there are injections
        \[
            \mathcal P\longrightarrow\mathcal N_a^{t_1},\qquad
            \mathcal P\otimes\mathcal M^{-1}\longrightarrow\mathcal N_a^{t_2}.
        \]
        Let $U$ be an open set on which $\pi$ and both injections are isomorphisms. Identify $U$ with its image under $\pi$, and fix $x=(x_1,\ldots,x_m)\in U$. Assume the following.
        \begin{enumerate}[beginpenalty=10000]
            \item The line bundle $\mathcal M$ is nef, and $\Sigma$ generates $\mathcal P\otimes\mathcal M^{-1}$.

            \item In an embedding given by $\mathcal P$, the first injection sends the coordinate sections to monomials with coefficient $1$. The second sends $\Sigma$ to polynomials $P_1,\ldots,P_{r_0}$ of multidegree $t_2a$, whose joint coefficient vector satisfies
            \[
                h(P_1,\ldots,P_{r_0})\leq\delta|a|.
            \]

            \item For every product $Y=\prod_iY_i\subset X^m$ containing $x$, with each $Y_i$ integral, its strict transform $\widetilde Y$ satisfies
            \begin{equation}\label{eq:aux-intersection-hypothesis}
                (\mathcal M^{\dim Y}\cdot\widetilde Y)
                \geq\theta^{-1}\prod_i a_i^{\dim Y_i}.
            \end{equation}
        \end{enumerate}
        Define
        \[
            h_{\mathcal M}(x)=h_{\mathcal P}(x)-h(\Sigma(x)),
        \]
        where $h_{\mathcal P}$ uses the stated coordinate sections. There are constants $c_1,c_2,c_3>0$, depending only on $X,N,m,t_1,t_2,r_0,\theta,\delta$, such that
        \begin{equation}\label{eq:aux-generalized-conclusion}
            \sum_i a_i h(x_i)\leq c_1h_{\mathcal M}(x)
        \end{equation}
        whenever $a_i/a_{i+1}\geq c_2$ for $1\leq i<m$ and $h(x_i)\geq c_3$ for all $i$. The constants are uniform in $\mathcal X$, the weights, the point, and their fields of definition.
    \end{proposition}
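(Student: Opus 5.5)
This is a Vojta–Faltings–Rémond style argument over the function field $F$ of a curve $C_F$. We construct an auxiliary section, differentiate it to find a nonvanishing derivative at $x$ of controlled index, and compare the resulting height with the weighted heights $\sum_i a_ih(x_i)$.

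\emph{Auxiliary section.} Fix a large auxiliary degree $D$. We seek a nonzero global section $s$ of $\mathcal M^{D}\otimes\mathcal P^{-\varepsilon D}$-type twists. Concretely, we take a section of $\mathcal N_a^{t_1D}$ of multidegree $t_1Da$ that becomes divisible by the image of $(\mathcal P\otimes\mathcal M^{-1})^{D}$ after twisting. The sections $\Sigma$ give a presentation of $\mathcal M$: a section of $\mathcal P^D$ restricts to a section of $\mathcal M^D$ when it lies in the ideal generated by products of the $P_j$.
\begin{itemize}
\item The dimension count uses nefness of $\mathcal M$ and hypothesis~(3) with $Y=X^m$. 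This gives $h^0(\mathcal M^D)\gtrsim \theta^{-1}\prod_i a_i^{\dim X}D^{m\dim X}/(m\dim X)!$.
\item The number of linear conditions is comparable to $\binom{N+t_1Da_i}{N}$-type products.
\end{itemize}
Siegel's lemma over $C_F$ produces $s$ with coefficient height at most $\delta|a|D$ times a constant, plus a genus term $O(g(C_F)\cdot\#\text{unknowns})$ divided by the number of relations. Following the paper's plan, we fix $a$ and the field $F$ and let $D\to\infty$ first. The genus term is sublinear in $D$ relative to the main terms, so it disappears after dividing by $D$ and passing to the limit. This is why the constants must not depend on $F$ or $a$.

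\emph{Index and derivatives.} We measure the index of $s$ at $x$ with weights $1/(Da_i)$. We choose a multi-derivative $\partial^{\mathbf{i}}s$ of minimal weighted index $\sigma$ that is nonzero at $x$. Its value is a section of $\mathcal M^D$ twisted by the differential terms. Over a function field of characteristic zero, differentiation does not increase heights beyond a combinatorial factor of size $2^{O(D|a|)}$. Here the factor is trivial, because the places are non-archimedean and the binomial coefficients lie in $k$. We then evaluate at $x$ and apply the product formula. This yields
\[
Dh_{\mathcal M}(x)\ \ge\ \text{(lower order)} - \sum_v\log|\partial^{\mathbf i}s(x)|_v\cdot[F:K]^{-1},
\]
and the Taylor expansion at $x$ bounds the local terms by $-(\sigma$-weighted$)\sum_i Da_ih(x_i)$ up to $\delta|a|D$. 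Combining these gives $\sum a_ih(x_i)\le c(\sigma^{-1})h_{\mathcal M}(x)+c\,\delta|a|$. The hypotheses $h(x_i)\ge c_3$ and $a_i\ge c_2a_{i+1}$ absorb the additive term $\delta|a|\le 2\delta a_1$ into $a_1h(x_1)$.

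\emph{Index bound (main obstacle).} We must show $\sigma$ is bounded below by a constant depending only on the listed data. We would use the geometric product theorem of Faltings, in the Rémond/Yuan form with Samuel multiplicities. If the index at $x$ were tiny, the loci of large weighted index would contain a product subvariety $Y=\prod Y_i\ni x$. The gap condition $a_i/a_{i+1}\ge c_2$ is used here. For such a $Y$, the product theorem bounds $\prod_i a_i^{\dim Y_i}\cdot\deg$ from above by $\sigma$-dependent multiples of $(\mathcal M^{\dim Y}\cdot\widetilde Y)$-type intersection numbers of the zero locus of $s$. Hypothesis~(3) gives the matching lower bound, and together these force $\sigma$ to be large, a contradiction.

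The delicate point is that we also need degree and height bounds for each factor $Y_i$, uniform in $a$ and in $F$. We would track these with the Chow heights $h(Y_i)$ through the product theorem, induct on $\dim Y$, and apply the comparison on the smaller product $Y$. Because the leading constants do not depend on the weights, the induction closes.
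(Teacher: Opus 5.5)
\textbf{There is a genuine gap.} Your index argument runs in the wrong direction, and the mechanism that makes $H_x=\sum_i a_ih(x_i)$ appear is missing.

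Nothing forces a nonvanishing derivative of the auxiliary section to be small at $x$ in terms of $H_x$; unlike in Roth's lemma, there is no approximated point here. The local derivative estimates on $Y_i$, made through finite projections \'etale at $x$, are \emph{upper} bounds that grow with $h(x_i)$. So differentiation is a cost of size $2(N+1)\sum_i|\kappa_i|\bigl(h(B_i)+D_ih(x_i)\bigr)$, and this cost increases with the index. The positive multiple of $H_x$ must come from the line bundle itself. The paper takes sections of $\mathcal Q_d=\mathcal M^d\otimes\mathcal N_a^{-d\varepsilon}$, whose height at $x$ is exactly $dh_{\mathcal M}(x)-d\varepsilon H_x$, and the product formula gives
\[
d\varepsilon H_x-dh_{\mathcal M}(x)\le h(F_d)+2(N+1)\sum_i|\kappa_i|\bigl(h(B_i)+D_ih(x_i)\bigr).
\]
You mention a twist by $\mathcal P^{-\varepsilon D}$, but you then count $h^0(\mathcal M^D)$ and never use the twist. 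Moreover, the hypotheses only give $h_{\mathcal P}(x)\le t_1H_x$, which is the wrong direction for a twist by $\mathcal P$.

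To absorb the derivative cost, the weighted index must be \emph{small}: $\sigma_d\le\varepsilon/(4t_1(N+1)\max_iD_i)$. So your inequality $\sum_ia_ih(x_i)\le c(\sigma^{-1})h_{\mathcal M}(x)+\cdots$ and your goal ``$\sigma$ bounded below'' are backwards. Likewise, ``if the index at $x$ were tiny, the loci of large index would contain a product through $x$'' cannot hold, since $x$ lies in a large-index locus only when its index is large. The twist also has to be paid for in the dimension count. This is where $\varepsilon$ is fixed in terms of $\theta$ and a degree bound, via $\mathcal M^{u-1}\cdot\mathcal N_a\le(t_1m)^uD\prod_ia_i^{u_i}$.

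\textbf{Role of the product theorem.} It enters only in the large-index alternative, and it is not played off directly against hypothesis~(3) on $X^m$. After taking a norm to $\prod_i\mathbb P^{u_i}$, it produces a proper hypersurface section of one factor through $x_i$. Its degree and weighted height $a_ih(U)$ are bounded independently of $d$, $F$ and $a$, because the factor $d$ in the multidegree $dt_1Da$ of the norm turns $h(F_d)$ into $h(F_d)/d$. One must then rebuild the auxiliary section on the strict transform of this smaller product. That is why (3) is assumed for \emph{every} product through $x$, and why the Siegel-lemma height must then include $\sum_ia_ih(Y_i)$ through the projection equations.

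The paper organizes this by choosing at the outset a product through $x$ of least dimension subject to recursive bounds $\prod_i\deg Y_i\le D_u$ and $\sum_ia_ih(Y_i)\le B_u|a|$. A non-\'etale projection, a vanishing coordinate, or a large index each produces a smaller product within these bounds, contradicting minimality. Dimension zero is excluded by $h(x_i)\ge c_3>B_0$.

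Your closing remark about induction points in this direction. But without the $\mathcal N_a^{-d\varepsilon}$ twist, the small-index alternative, and this degree and height bookkeeping, the sketch does not yield the inequality.
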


The auxiliary section will belong to $\mathcal M^d\otimes\mathcal N_a^{-d\varepsilon}$, with $\varepsilon>0$ rational and $d$ a large integer. If its first nonzero derivative at $x$ has small weighted order, the product formula gives the height comparison. Otherwise, the index estimate produces a hypersurface through one of the $x_i$, with controlled degree and height. We choose a product through $x$ of least dimension subject to bounds preserved by this construction, so the second alternative is impossible. Section~\ref{sec:comparison-completion} chooses these bounds and completes the proof.

\subsubsection{Finite projections}

Fix the data of Proposition~\ref{prop:geometric-comparison}. We work on a product $Y=\prod_iY_i\subset X^m$ containing $x$, with each $Y_i$ integral, so that the estimates will also apply after replacing a factor by a proper subvariety. Put
\[
    u_i=\dim Y_i,\qquad D_i=\deg Y_i,\qquad u=\sum_i u_i,\qquad D=\prod_iD_i.
\]
We use this notation until Section~\ref{sec:comparison-completion}. Let $\widetilde Y$ be the strict transform of $Y$ in $\mathcal X$, and let $W_{ij}$ be the original homogeneous coordinates on the $i$th factor. We retain these coordinates because the coordinate sections of $\mathcal P$ map to monomials in them. Omit coordinates identically zero on $Y_i$, together with the resulting zero projective coordinates and generating sections. If a remaining coordinate vanishes at $x_i$, it already gives a proper hyperplane section through $x_i$ with coefficient height zero.

Finite linear projections provide local parameters for differentiation wherever they are \'etale. Their equations also control the coefficients of the auxiliary section. The following construction is the geometric-height counterpart of \cite[Proposition~2.2 and Lemma~2.3]{Rem05}; constant coefficients have absolute value at most $1$ at every place.

\begin{samepage}
    \begin{lemma}\label{lem:comparison-projections}
        For every $i$, there are constant linear forms $V_{i0},\ldots,V_{iu_i}$ defining a finite projection $\rho_i:Y_i\to\mathbb P^{u_i}$ of degree $D_i$, with $V_{i0}(x_i)\ne0$, having the following properties.
        \begin{enumerate}[beginpenalty=10000]
            \item Each remaining $W_{ij}$ satisfies a monic homogeneous equation $Q_{ij}(V_{i0},\ldots,V_{iu_i},W_{ij})=0$ of degree $D_i$. If $u_i<N$, the same holds for an additional constant linear form whose ratio to $V_{i0}$ generates the function-field extension. Let $B_i$ be the joint coefficient vector of these equations, including their common leading coefficient $1$. Then
            \begin{equation}\label{eq:aux-projection-height}
                h(B_i)\leq h(Y_i).
            \end{equation}
            \item Write $Q_{ij}=Q_{ij,1}^{b_{ij}}$, where $Q_{ij,1}$ is monic and irreducible. The discriminants of these irreducible equations and of the equation for the additional linear form are nonzero homogeneous polynomials $\Delta$ in the projection coordinates satisfying
            \begin{equation}\label{eq:aux-discriminant}
                \deg\Delta\leq2D_i^2,\qquad h(\Delta)\leq2D_i h(B_i).
            \end{equation}
            If these discriminants do not vanish at $\rho_i(x_i)$, then $\rho_i$ is \'etale at $x_i$ and all coordinate derivative denominators are nonzero there. For $Y_i=\mathbb P^N$, one may take the identity projection and linear coordinate equations.
        \end{enumerate}
    \end{lemma}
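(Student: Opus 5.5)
We will construct the projection via Chow forms, as in the arithmetic argument of \cite[Proposition~2.2 and Lemma~2.3]{Rem05}, with the archimedean estimates replaced by ultrametric ones at the places of $C_F$.

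\textbf{Choosing the forms.} Since $k$ is infinite and algebraically closed, we first pick constant linear forms $V_{i0},\ldots,V_{iu_i}$ with coefficients in $k$ whose common zero locus does not meet $Y_i$ and with $V_{i0}(x_i)\neq0$. Both are open conditions on the coefficient space, and they are nonempty: the first by dimension counting, the second because some remaining coordinate is nonzero at $x_i$. The resulting projection $\rho_i:Y_i\to\mathbb P^{u_i}$ is then finite of degree $D_i$. For each remaining $W_{ij}$, and for one additional generic constant form $V_{i,u_i+1}$ when $u_i<N$, we will specialize the Chow form of $Y_i$ to the linear forms $V_{i0},\ldots,V_{iu_i}$ and one form built from $W_{ij}$. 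This produces the norm, or characteristic polynomial, $Q_{ij}$ of $W_{ij}$ over $k(\mathbb P^{u_i})$. It is homogeneous of degree $D_i$, monic in $W_{ij}$ because the projection is finite, and vanishes on $Y_i$. Since a characteristic polynomial is a power of the minimal polynomial, $Q_{ij}=Q_{ij,1}^{b_{ij}}$ with $Q_{ij,1}$ monic irreducible. For a generic choice of the extra form, $Q_{ij,1}$ has degree $D_i$, so its ratio to $V_{i0}$ generates the function-field extension.

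\textbf{Height bound \eqref{eq:aux-projection-height}.} Each coefficient of $Q_{ij}$ is obtained from the coefficients of the Chow form of $Y_i$ by a polynomial with constant coefficients, because the specializing forms are constant. Hence, at every place $v$ of $C_F$, the $v$-adic maximum of the coefficients of the $Q_{ij}$ is at most that of the Chow form. Summing over $v$ gives $h(B_i)\le h(Y_i)$. Including the leading coefficient $1$ only ensures that the local maxima are $\geq0$, matching the Chow form normalized with a unit coefficient. This step has no archimedean multinomial loss, which is why the inequality holds with constant $1$. The point that needs care is the normalization: the joint projective height of the $Q_{ij}$ must be compared with the same projective representative of the Chow form. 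This works because each $Q_{ij}$ is monic and uses the same scaling, and in the $0$-dimensional or generic-form situation it agrees with the Chow height as defined.

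\textbf{Discriminants.} By Gauss's lemma over the ultrametric places, the monic factor $Q_{ij,1}$ of $Q_{ij}$ has $v$-adic coefficient norm at most that of $Q_{ij}$, so $h(Q_{ij,1})\le h(B_i)$. Its discriminant is a resultant of $Q_{ij,1}$ and $\partial Q_{ij,1}/\partial W$. It is a polynomial of degree at most $2D_i-1$ in the coefficients, which are homogeneous forms of degree at most $D_i$ in the $V$'s. This gives $\deg\Delta\le2D_i^2$. Since the Sylvester determinant has integer (hence constant) coefficients, the ultrametric inequality gives $h(\Delta)\le(2D_i-1)h(B_i)\le2D_ih(B_i)$. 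Irreducibility in characteristic zero makes $\Delta\neq0$.

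\textbf{Étaleness.} If these discriminants are nonzero at $\rho_i(x_i)$, then $\partial Q_{ij,1}/\partial W_{ij}$ does not vanish at $x_i$, so each $W_{ij}/V_{i0}$ is locally an étale function of the projection coordinates. The generating form shows that $Y_i$ agrees near $x_i$ with the hypersurface it defines over $\mathbb P^{u_i}$, which is étale over $x_i$. Hence $\rho_i$ is étale there and all derivative denominators are nonzero. The case $Y_i=\mathbb P^N$ is immediate.

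The main obstacle is the bookkeeping of the normalization in \eqref{eq:aux-projection-height}, namely matching the Chow-height convention exactly. The remaining steps are routine.
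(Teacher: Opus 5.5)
Your proposal is correct and follows the paper's proof essentially step by step: specialize the Chow form, take constant linear combinations of its coefficients and use the product formula to remove one common scalar, bound the irreducible factor by multiplicativity of the Gauss norm, count degrees in the resultant, and get étaleness from the equation of the primitive element. The paper makes explicit two points you only assert. First, the $Z^{D_i}$-coefficient is the single nonzero scalar obtained by putting $V_{i0}$ in the last slot, so it is independent of $j$. Second, near $x_i$ the coordinate algebra equals the algebra generated by the primitive element: the latter is étale, hence normal, and the coordinate algebra is integral over it with the same fraction field.
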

\end{samepage}

\begin{proof}
    Generic projection and the primitive element theorem give a nonempty open set of choices for the projection and the additional linear form. We can also require $V_{i0}(x_i)\ne0$. The coefficients can be chosen in $k$, since constant points are Zariski dense in the parameter space.

    Specialize the Chow form of $Y_i$ to the $u_i$ hyperplanes $V_{i\ell}-T_\ell V_{i0}$ and the hyperplane $W_{ij}-ZV_{i0}$. The coefficient of $Z^{D_i}$ is obtained by taking $V_{i0}$ as the last hyperplane; it is a nonzero scalar independent of $j$. Divide by this common scalar and homogenize. This gives $Q_{ij}$, and the same construction applies to the additional linear form. The coefficients are constant linear combinations of the Chow coefficients, up to one common scalar. The ultrametric inequality bounds their local norms, and the product formula removes the scalar, proving \eqref{eq:aux-projection-height}.

    Multiplicativity of the Gauss norm gives $|Q_{ij,1}|_v=|Q_{ij}|_v^{1/b_{ij}}\leq|B_i|_v$, since $B_i$ contains $1$. Discriminants are resultants of total coefficient degree at most $2D_i$, which proves \eqref{eq:aux-discriminant} for the coordinate equations and the equation of the additional linear form. If the discriminant of the latter equation does not vanish at $\rho_i(x_i)$, the localized algebra generated by the ratio of that form to $V_{i0}$ is \'etale over the base, hence normal. The coordinate algebra of $Y_i$ is integral over this algebra and has the same fraction field, so the two algebras agree there. Thus $\rho_i$ is \'etale at $x_i$. Nonvanishing of the other discriminants gives the asserted derivative denominators.
\end{proof}

Choose these projections over a finite extension $F/K$ defining $x$, $Y$, the line bundles, the sections, and the projection equations. Enlarge $F$ to contain the finitely many roots introduced in Section~\ref{sec:local-differentiation}. The field $F$ and the weights $a_i$ remain fixed while the auxiliary degree tends to infinity. Hypersurface intersections may later require further extensions, under which the normalized heights remain unchanged.

\subsection{Existence of an auxiliary section}\label{sec:aux-sections}

Assume $u>0$ and fix an upper bound $D_*\in\mathbb Z$ for $D$. Put
\[
    \varepsilon=\frac{1}{4u\theta(t_1m)^uD_*},\qquad
    \mathcal Q_d=\mathcal M^d\otimes\mathcal N_a^{-d\varepsilon},
\]
where $d$ runs through positive integers with $d\varepsilon\in\mathbb Z$. All line bundles below are restricted to $\widetilde Y$. The intersection hypothesis ensures that subtracting $d\varepsilon\mathcal N_a$ from $d\mathcal M$ leaves a space of sections of order $d^u$. We use the dimension argument of \cite[Proposition~4.1]{Rem05}, with the nef hypothesis required in \cite[proof of Proposition~3.1]{Dil20}.

\begin{samepage}
    \begin{lemma}\label{lem:auxiliary-dimension}
        As $d$ tends to infinity through positive integers with $d\varepsilon\in\mathbb Z$,
        \begin{equation}\label{eq:aux-section-dimension}
            h^0(\widetilde Y,\mathcal Q_d)
            \geq \frac{d^u}{4\theta u!}\prod_i a_i^{u_i}+o(d^u).
        \end{equation}
    \end{lemma}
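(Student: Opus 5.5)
Asymptotic Riemann--Roch reduces the lower bound to a top self-intersection of a nef-minus-nef class, and Siu's inequality gives that self-intersection. On the $u$-dimensional variety $\widetilde Y$, write
\[
\mathcal Q_d=(\mathcal M\otimes\mathcal N_a^{-\varepsilon})^{\otimes d}
\]
as a $\mathbb Q$-line bundle. It is the difference of the nef classes $\mathcal M$ and $\varepsilon\mathcal N_a$; the latter is nef as the pullback of an ample class. Siu's inequality (Lazarsfeld, Theorem~2.2.15 and Example~2.2.33) then gives
\[
h^0(\widetilde Y,\mathcal Q_d)\ \geq\ \frac{d^u}{u!}\Bigl((\mathcal M^u\cdot\widetilde Y)-u\,\varepsilon\,(\mathcal M^{u-1}\cdot\mathcal N_a\cdot\widetilde Y)\Bigr)+o(d^u).
\]
Here the multiples $d$ are taken with $d\varepsilon\in\bZ$, so the bundles are honest line bundles. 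The $o(d^u)$ term depends on $\widetilde Y$ and the weights, but these stay fixed while $d\to\infty$. It therefore suffices to show that the bracket is at least $\tfrac{1}{4\theta}\prod_i a_i^{u_i}$.

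The first term is handled by hypothesis~(3) applied to $Y$ itself: $(\mathcal M^u\cdot\widetilde Y)\geq\theta^{-1}\prod_ia_i^{u_i}$.

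The mixed term needs an upper bound, obtained from the injections. Since $\Sigma$ generates $\mathcal P\otimes\mathcal M^{-1}$, that bundle is globally generated, hence nef. So $\mathcal M$ is dominated by $\mathcal P$ in the sense that $\mathcal P-\mathcal M$ is nef. The injection $\mathcal P\to\mathcal N_a^{t_1}$ exhibits $t_1\mathcal N_a-\mathcal P$ as effective, and $\mathcal N_a$ is nef. Replacing nef factors one at a time by larger ones is legitimate when the difference is nef, or effective with the remaining factors nef. Thus
\[
(\mathcal M^{u-1}\cdot\mathcal N_a\cdot\widetilde Y)\leq(\mathcal P^{u-1}\cdot\mathcal N_a\cdot\widetilde Y)\leq t_1^{u-1}(\mathcal N_a^u\cdot\widetilde Y).
\]
For the effective step, an effective Cartier divisor $E$ not containing $\widetilde Y$ gives $(\ldots\cdot E\cdot\widetilde Y)\geq0$ against nef classes. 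If the support of $E$ does contain $\widetilde Y$, move the section: take $\mathcal P$ very ample and restrict the injection to the open set $U$, which meets $\widetilde Y$ because $x\in U$.

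Since $\mathcal N_a$ is pulled back from $X^m$ and $\pi$ is birational onto $Y$, the projection formula gives
\[
(\mathcal N_a^u\cdot\widetilde Y)=\deg_{\boxtimes L_X^{a_i}}(Y)=\binom{u}{u_1,\ldots,u_m}\prod_i a_i^{u_i}D_i\leq m^u D_*\prod_ia_i^{u_i}.
\]
The multinomial coefficient is at most $m^u$. Combining, the subtracted term is at most
\[
u\varepsilon\, t_1^{u-1}m^uD_*\prod_ia_i^{u_i}\leq\frac{1}{4\theta}\prod_ia_i^{u_i}
\]
by the choice of $\varepsilon$, using $t_1\geq1$. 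The bracket is therefore at least $\tfrac{3}{4\theta}\prod_ia_i^{u_i}\geq\tfrac1{4\theta}\prod_ia_i^{u_i}$, which is \eqref{eq:aux-section-dimension}.

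The main obstacle is the comparison step. It requires that nefness of $\mathcal P\otimes\mathcal M^{-1}$ and effectivity of $\mathcal N_a^{t_1}\otimes\mathcal P^{-1}$ both survive restriction to the strict transform $\widetilde Y$, which may be singular. I would handle this by pulling back to a resolution of $\widetilde Y$, where Siu's inequality and the monotonicity of intersection numbers hold. Pullback preserves nefness, degrees and $h^0$ lower bounds, since sections pull back injectively.
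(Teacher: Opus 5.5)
Your proposal is correct and follows the paper's proof. Where you cite Siu's inequality, the paper proves the needed case inline: it restricts successively to a divisor $E\in|\mathcal N_a|$ and applies nef asymptotic Riemann--Roch on $\widetilde Y$ and on $E$. It then bounds $\mathcal M^{u-1}\mathcal N_a\leq t_1^{u-1}\mathcal N_a^u\leq(t_1m)^uD\prod_ia_i^{u_i}$ exactly as you do.

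Your closing detour through a resolution $\nu\colon Y'\to\widetilde Y$ is unnecessary. Siu's inequality, and the nonnegativity of nef classes against effective cycles, hold on any projective variety. Note also that injectivity of pullback gives only $h^0(\widetilde Y,\mathcal Q_d)\leq h^0(Y',\nu^*\mathcal Q_d)$, which is the wrong direction; carrying a lower bound back to $\widetilde Y$ would instead require the birational invariance of the volume.
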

\end{samepage}

\begin{proof}
    Choose an effective Cartier divisor $E\in|\mathcal N_a|$ on $\widetilde Y$ and put $q=d\varepsilon$. Successively restricting to $E$ gives
    \[
        h^0(d\mathcal M-q\mathcal N_a)
        \geq h^0(d\mathcal M)-q\,h^0(E,d\mathcal M|_E).
    \]
    Here each $(d\mathcal M-j\mathcal N_a)|_E$ injects into $d\mathcal M|_E$: multiply by a section of $j\mathcal N_a|_E$ that avoids the associated points of $E$. Such a section exists because $\mathcal N_a$ is globally generated and the ground field is infinite. By the nef asymptotic Riemann--Roch formula \cite[Theorem~1.4.40 and Corollary~1.4.41]{Laz04}, the two terms on the right have leading coefficients $\mathcal M^u/u!$ and $\mathcal M^{u-1}E/(u-1)!$. On the possibly nonreduced divisor $E$, use the coherent-sheaf form of the same estimate and Riemann--Roch on its fundamental cycle; lower-dimensional terms are $o(d^{u-1})$. Hence
    \[
        \liminf_{d\to\infty}\frac{u!}{d^u}
        h^0(\widetilde Y,\mathcal M^d\mathcal N_a^{-d\varepsilon})
        \geq \mathcal M^u-u\varepsilon\mathcal M^{u-1}\mathcal N_a.
    \]
    For $u=1$, the restriction term is simply the length of $E$. The injection into $\mathcal N_a^{t_1}$ is nonzero on $\widetilde Y$, because it is an isomorphism at $x$. Thus $t_1\mathcal N_a-\mathcal P$ is effective there. The class $\mathcal P-\mathcal M$ is globally generated. Intersecting these classes with the nef classes $\mathcal M$ and $\mathcal N_a$ gives
    \[
        \mathcal M^{u-1}\mathcal N_a
        \leq t_1^{u-1}\mathcal N_a^u
        \leq(t_1m)^u D\prod_i a_i^{u_i}.
    \]
    Together with \eqref{eq:aux-intersection-hypothesis} and the choice of $\varepsilon$, this bounds the liminf below by $3(4\theta)^{-1}\prod_i a_i^{u_i}$. In particular, \eqref{eq:aux-section-dimension} holds.
\end{proof}

As in \cite[Proposition~4.2]{Rem05}, we express a section of $\mathcal Q_d$ by polynomials whose coefficients satisfy linear equations. Set $r=r_0(N+1)^m$ and let $s_\lambda$ denote the $r$ sections
\[
    \sigma^d\prod_i W_{i\ell_i}^{d\varepsilon a_i},
    \qquad \sigma\in\Sigma,\quad0\leq\ell_i\leq N,
\]
which generate $\mathcal P^d\mathcal M^{-d}\mathcal N_a^{d\varepsilon}$. For large $d$, Serre vanishing allows us to choose a basis of $H^0(\widetilde Y,\mathcal P^d)$ consisting of monomials in the projective coordinates. Write a family $\mathbf F=(F_\lambda)_{1\leq\lambda\leq r}$ in this basis. It represents a section of $\mathcal Q_d$ precisely when the local quotients $F_\lambda/s_\lambda$ agree, or equivalently when
\[
    F_\lambda s_\mu=F_\mu s_\lambda\qquad(1\leq\lambda,\mu\leq r).
\]
Let $N_d$ be the number of coefficient unknowns and $q_d$ the dimension of the kernel of this linear system. We allow zero generating sections, so omitting coordinates does not change $r$.

\begin{samepage}
    \begin{lemma}\label{lem:auxiliary-system}
        The compatibility system has the following properties.
        \begin{enumerate}[beginpenalty=10000]
            \item Its kernel is naturally $H^0(\widetilde Y,\mathcal Q_d)$, and
            \begin{equation}\label{eq:aux-rank-ratio}
                \frac{N_d}{q_d}\leq4\theta r(t_1m)^uD+o(1).
            \end{equation}
            \item It can be written as scalar linear equations over $F$ whose joint coefficient vector $\mathcal A_d$ satisfies
            \begin{equation}\label{eq:aux-matrix-height}
                h(\mathcal A_d)\leq d\left(\delta|a|+(t_1+t_2+\varepsilon)\sum_i a_i h(B_i)\right)+O_{Y,a}(1).
            \end{equation}
        \end{enumerate}
    \end{lemma}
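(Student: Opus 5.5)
For part~(1), I would first identify the kernel. Since $\Sigma$ generates $\mathcal P\otimes\mathcal M^{-1}$ and the monomials $\prod_iW_{i\ell_i}$ of multidegree $a$ generate $\mathcal N_a$, the $s_\lambda$ generate $\mathcal P^d\mathcal M^{-d}\mathcal N_a^{d\varepsilon}$ on $\widetilde Y$. A section $\xi$ of $\mathcal Q_d$ gives the family $F_\lambda=\xi s_\lambda\in H^0(\widetilde Y,\mathcal P^d)$. These satisfy the compatibility relations. Conversely, compatible $F_\lambda$ glue to the section $F_\lambda/s_\lambda$ on the open set where $s_\lambda\neq0$. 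Because these open sets cover $\widetilde Y$, this correspondence is bijective, so $q_d=h^0(\widetilde Y,\mathcal Q_d)$. The number of unknowns is $N_d=r\,h^0(\widetilde Y,\mathcal P^d)$. By asymptotic Riemann--Roch for the very ample bundle $\mathcal P$, this equals $r\,\mathcal P^u d^u/u!+o(d^u)$. As in Lemma~\ref{lem:auxiliary-dimension}, the injection $\mathcal P\to\mathcal N_a^{t_1}$ gives $\mathcal P^u\leq t_1^u\mathcal N_a^u\leq(t_1m)^uD\prod_ia_i^{u_i}$. Dividing by the lower bound \eqref{eq:aux-section-dimension} cancels both $\prod_i a_i^{u_i}$ and $d^u/u!$, and leaves $4\theta r(t_1m)^uD+o(1)$.

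For part~(2), I would make the relations $F_\lambda s_\mu-F_\mu s_\lambda=0$ into scalar equations. Via the first injection, each $F_\lambda$ is a polynomial in the $W_{ij}$ of multidegree $dt_1a$ with unknown coefficients, and it lives in the monomial basis. Via the second injection, $s_\lambda$ is $P_\sigma^d\prod_iW_{i\ell_i}^{d\varepsilon a_i}$, which has multidegree $d(t_2+\varepsilon)a$. Each product is a polynomial in the $W_{ij}$. It vanishes on $Y$ exactly when its reduction modulo the ideal of $Y$ vanishes, and I would compute this reduction with the projection equations of Lemma~\ref{lem:comparison-projections}. Each factor $Y_i$ is integral over $k[V_{i0},\ldots,V_{iu_i}]$ and birational via the additional linear form. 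So, after multiplying by a fixed power of the discriminant, every monomial in the $W_{ij}$ of degree $e_i$ can be rewritten as a combination of $V$-monomials times powers $1,\omega,\ldots,\omega^{D_i-1}$ of the primitive element. Vanishing is then equivalent to vanishing of the coefficients of this free representation. Those coefficients are the scalar equations.

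For the height bound, I would use only the ultrametric inequality at each place $v$, since the constant coefficients have norm at most $1$. The coefficients of $P_\sigma^d$ contribute at most $d\,\delta|a|$ through the joint height of the $P$. Reducing a monomial of degree $e_i$ in the $i$th factor by repeated use of monic equations whose coefficient vector is $B_i$ costs at most $e_i\log|B_i|_v$. Here $e_i\leq d(t_1+t_2+\varepsilon)a_i$ is the total degree in the $i$th factor. The discriminant normalisation and the passage to the primitive-element basis depend only on $Y$ and the fixed multidegree pattern. This pattern is linear in $d$, and I expect its contribution to be bounded by a constant depending on $Y$ and $a$, which gives the $O_{Y,a}(1)$ term. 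Summing over places and dividing by $[F:K]$ gives \eqref{eq:aux-matrix-height}.

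The main obstacle is this last step: the reduction must cost exactly one factor $|B_i|_v$ per unit degree, with no $d$-dependent loss from the discriminant denominators or the change of basis. If this cannot be arranged, I would instead write the relations in the integral basis obtained from the monic relation $Q$ for $\omega$ alone. There I would express each $W_{ij}V_{i0}^{D_i-1}$ via $Q_{ij}$, and absorb the fixed denominators into the $O_{Y,a}(1)$ term.
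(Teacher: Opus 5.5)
Your part~(1) is correct and is essentially the paper's argument. Gluing the quotients $F_\lambda/s_\lambda$ identifies the kernel with $H^0(\widetilde Y,\mathcal Q_d)$. You bound $h^0(\widetilde Y,\mathcal P^d)$ through $\mathcal P^u\leq t_1^u\mathcal N_a^u$, whereas the paper uses $h^0(\mathcal P^d)\leq h^0(\mathcal N_a^{dt_1})$ directly; either way, dividing by \eqref{eq:aux-section-dimension} gives \eqref{eq:aux-rank-ratio}.

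Part~(2) has a genuine gap at exactly the step you flag, and the reason you give for the $O_{Y,a}(1)$ term does not work. If the change to the primitive-element basis depends on a pattern that grows linearly in $d$, nothing stops its height cost from also growing linearly in $d$. That would destroy the coefficient $(t_1+t_2+\varepsilon)h(B_i)$ in \eqref{eq:aux-matrix-height}. It is true that a fixed power of the discriminant puts every monomial into the span of $1,\omega,\ldots,\omega^{D_i-1}$, but this says nothing about the heights of the resulting coefficients.

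The missing idea is to finish the reduction by the monic equations $Q_{ij}$ before any change of basis.
\begin{itemize}
\item Each $Q_{ij}$ is monic of degree $D_i$ in $W_{ij}$, and each substitution lowers the total degree in the non-projection coordinates.
\item So at most $\gamma_i=d(t_1+t_2+\varepsilon)a_i$ substitutions occur in factor $i$, each costing one factor $|B_i|_v\geq1$.
\item This turns every coefficient $\pm W^\alpha P_j^d$ into a combination of monomials $W^k$ with all $k_{ij}<D_i$. The coefficients are polynomials in the projection coordinates, with norm at most $|(P_1,\ldots,P_{r_0})|_v^d\prod_i|B_i|_v^{\gamma_i}$.
\end{itemize}
The set of such $W^k$ is finite and independent of $d$. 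Only these monomials are then rewritten in the basis of powers of the primitive elements. A single fixed nonzero polynomial $R_Y$ in the projection variables gives $R_YW^k=S_k$ for all of them. Multiplying the compatibility equations by $R_Y$ does not change the kernel, and the fixed $S_k$ contribute an amount bounded independently of $d$.

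Your fallback does not supply this. If you write each $W_{ij}$ in the $\omega$-basis and multiply out monomials of degree $e_i\sim d$, the denominators are raised to powers growing with $d$, so they are not fixed. The cost per unit of degree is then governed by the heights of those expressions and by the extra reductions modulo the equation for $\omega$, not by $h(B_i)$ alone. A trace argument using the conjugate bound $|B_i|_v^{e_i}$ could rescue your direct route, but you would have to carry it out.
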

\end{samepage}

\begin{proof}
    Since the $s_\lambda$ generate, every compatible family determines a section of $\mathcal Q_d$ by gluing its local quotients. Multiplication by the $s_\lambda$ gives the inverse map. Thus $q_d=h^0(\widetilde Y,\mathcal Q_d)$. The injection into $\mathcal N_a^{dt_1}$ gives
    \[
        N_d\leq r\frac{d^u}{u!}(t_1m)^uD\prod_i a_i^{u_i}+o(d^u),
    \]
    and Lemma~\ref{lem:auxiliary-dimension} proves \eqref{eq:aux-rank-ratio}.

    Since the coordinate sections of $\mathcal P$ map to monomials, each coefficient in a compatibility equation is a polynomial of the form $\pm W^\alpha P_j^d$ of multidegree $\gamma=d(t_1+t_2+\varepsilon)a$. Reduce this polynomial using the monic equations $Q_{ij}$. The result is a sum of monomials $W^k$ with $k_{ij}<D_i$, whose coefficients are polynomials in the projection coordinates. Each substitution decreases the total degree in the non-projection coordinates, so along any branch there are at most $\gamma_i$ substitutions in factor $i$. Since $|B_i|_v\geq1$, the ultrametric inequality bounds the norm of the entire coefficient family by
    \[
        |(P_1,\ldots,P_{r_0})|_v^d\prod_i|B_i|_v^{\gamma_i}.
    \]
    There are only finitely many possible monomials $W^k$, independently of $d$. Express them in the function-field basis given by powers of the primitive elements, with exponent less than $D_i$ in factor $i$. A single nonzero polynomial $R_Y$ in the projection variables clears the denominators of all these expressions; write $R_YW^k=S_k$ in this basis. Multiplying the compatibility equations by $R_Y$ preserves their kernel. Substitution, followed by taking coefficients in the projection variables and powers of the primitive elements, gives scalar equations over $F$. The polynomials $S_k$ are fixed as $d$ varies, so their contribution to the height is bounded independently of $d$. Summing the local bounds proves \eqref{eq:aux-matrix-height}.
\end{proof}

Thunder's function-field Siegel lemma gives a nonzero solution in terms of the row heights. We use \cite[Theorem~1.1 and (14)]{Fuk10} in place of the number-field estimate \cite[Lemma~2.6]{Rem05}. Let $g_F$ be the genus of $C_F$. A system with $N$ unknowns and $s<N$ independent nonzero rows has a nonzero kernel vector $z$ satisfying
\begin{equation}\label{eq:siegel}
    h(z)\leq\frac{\sum_i h(\text{row }i)}{N-s}
    +\frac{2g_F+1}{[F:K]}.
\end{equation}
For this normalization, choose $O\in C_F(k)$ and, by Riemann--Roch, a nonconstant $t\in F$ whose only pole is at $O$, of order $e\leq g_F+1$. Then $[F:k(t)]=e$. Since $k$ is algebraically closed, the constant $m(F,k(t))$ in \cite[(14)]{Fuk10} equals $e$. With heights normalized by $e$, the cited basis bound gives a vector of logarithmic height at most the kernel's Pl\"ucker height divided by $N-s$, plus $(g_F-1+e)/e$. By Pl\"ucker duality \cite[(21)]{Fuk10} and the ultrametric determinant bound, the kernel height is at most the sum of the row heights. Multiplication by $e/[F:K]$ gives an additive term at most $2g_F/[F:K]$, and hence \eqref{eq:siegel}. If $s=0$, a coordinate vector has height zero.

The genus term depends on $F$ but not on $d$. Dividing by $d$ with $F$ fixed gives the following estimate.

\begin{samepage}
    \begin{lemma}\label{lem:auxiliary-small-section}
        For all sufficiently large positive integers $d$ with $d\varepsilon\in\mathbb Z$, there is a nonzero section $s_d\in H^0(\widetilde Y,\mathcal Q_d)$ represented by a family $F_d$ of homogeneous polynomials in the chosen monomial basis such that
        \begin{equation}\label{eq:aux-height-explicit}
            \limsup_{d\to\infty}\frac{h(F_d)}{d}
            \leq4\theta r(t_1m)^uD\left(\delta|a|+(t_1+t_2+\varepsilon)\sum_i a_i h(B_i)\right).
        \end{equation}
        The expression on the right is independent of $F$ and linear in the weights. The lower bound on $d$ may depend on the fixed data, including $F$ and $a$.
    \end{lemma}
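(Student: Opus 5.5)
The plan is to apply the Siegel lemma \eqref{eq:siegel} to the compatibility system of Lemma~\ref{lem:auxiliary-system}. Then we divide by $d$ with $F$ and $a$ fixed and let $d\to\infty$. By Lemma~\ref{lem:auxiliary-system}(1), the kernel of the system is $H^0(\widetilde Y,\mathcal Q_d)$. So a nonzero kernel vector $z$ gives a nonzero family $F_d$ in the chosen monomial basis representing a section $s_d$. Its height $h(F_d)$ is the height of $z$, since the coefficients of $F_d$ are exactly the coordinates of $z$.

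First, select from the scalar equations produced in Lemma~\ref{lem:auxiliary-system}(2) a maximal linearly independent set of nonzero rows; let $s$ denote their number. The kernel is unchanged, so $N_d-s=q_d$. Lemma~\ref{lem:auxiliary-dimension} gives $q_d>0$ for large $d$, hence $s<N_d$. Each selected row has height at most $h(\mathcal A_d)$, because a row's local norm is bounded by the joint norm and the projective normalization is unchanged by scaling. Thus $\sum_i h(\text{row }i)\leq s\,h(\mathcal A_d)\leq N_d\,h(\mathcal A_d)$. If $s=0$, take a coordinate vector, which has height zero, and the conclusion is trivial.

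Substituting into \eqref{eq:siegel} gives
\[
    h(F_d)\leq\frac{N_d}{q_d}\,h(\mathcal A_d)+\frac{2g_F+1}{[F:K]}.
\]
Now insert \eqref{eq:aux-rank-ratio}, namely $N_d/q_d\leq4\theta r(t_1m)^uD+o(1)$, and \eqref{eq:aux-matrix-height}, which bounds $h(\mathcal A_d)$ by $d$ times the bracket plus $O_{Y,a}(1)$. Divide by $d$. The genus term and the $O_{Y,a}(1)$ term are fixed as $d$ varies, because $F$, $Y$ and $a$ are fixed, so they disappear in the limit. The $o(1)$ term multiplies a bounded quantity and also disappears. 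Taking $\limsup$ yields \eqref{eq:aux-height-explicit}.

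Independence of $F$ is clear, since the right side involves only $\theta,r,t_1,t_2,m,u,D,\delta,\varepsilon$, the weights, and the heights $h(B_i)$. These heights are normalized and therefore invariant under field extension. Linearity in the weights is also visible from the formula. The only step needing care is the row-height bookkeeping, which is where the bound $s\le N_d$ is used. One must also check that the scalar equations over $F$ obtained in Lemma~\ref{lem:auxiliary-system} have their coefficients exactly in $\mathcal A_d$, so that the joint-norm bound applies rowwise. That is immediate from the construction there.
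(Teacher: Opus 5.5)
Your proposal is correct and follows the paper's proof. You apply \eqref{eq:siegel} to a maximal independent set of rows and bound each row height by $h(\mathcal A_d)$, which gives $h(F_d)\leq (N_d/q_d)\,h(\mathcal A_d)+(2g_F+1)/[F:K]$; dividing by $d$ and using \eqref{eq:aux-rank-ratio} and \eqref{eq:aux-matrix-height} with $F$ and $a$ fixed then removes the genus and $O_{Y,a}(1)$ terms, exactly as in the paper, and your extra bookkeeping (nonnegativity of heights and $q_d>0$ from Lemma~\ref{lem:auxiliary-dimension}) only makes explicit what the paper leaves implicit.
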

\end{samepage}

\begin{proof}
    Apply \eqref{eq:siegel} to independent rows of the compatibility system. Each row has height at most $h(\mathcal A_d)$, so a nonzero kernel vector can be chosen with
    \[
        h(F_d)\leq\frac{N_d}{q_d}h(\mathcal A_d)+\frac{2g_F+1}{[F:K]}.
    \]
    Divide by $d$ and use \eqref{eq:aux-rank-ratio} and \eqref{eq:aux-matrix-height}. All data, including $F$ and $a$, are fixed while $d$ tends to infinity, and hence
    \begin{equation}\label{eq:fieldlimit}
        \lim_{d\to\infty}\frac{2g_F+1}{d[F:K]}=0.
    \end{equation}
    Neither this term nor $O_{Y,a}(1)/d$ contributes to the limit, which proves \eqref{eq:aux-height-explicit}.
\end{proof}

\subsection{A height bound from differentiation}\label{sec:local-differentiation}

We next estimate the contribution of differentiation to the height inequality. The local input is \cite[Lemma~2.7, pp.~473--474]{Rem05}: its proof at a simple root, on pp.~474--475, uses only the Leibniz rule and the ultrametric inequality and applies at every place of $F$. All derivatives are taken over $F$.

Let $T=(T_1,\ldots,T_u)$ be affine coordinates, and let $y(T)$ be a local algebraic function satisfying a polynomial relation
\[
    Q(T,y)=0,\qquad Q_Y(0,y_0)\ne0,
\]
where $y_0=y(0)$ and $Q_Y=\partial Q/\partial Y$. At a fixed place $v$, rescale the coordinate so that $|y_0|_v\leq1$, and let $|Q|_v$ be the maximum of the absolute values of its coefficients after this rescaling. For a multi-index $\alpha$, write $\partial^\alpha=(\alpha!)^{-1}\partial^{|\alpha|}/\partial T^\alpha$. Put
\[
    c_v=\frac{|Q|_v}{|Q_Y(0,y_0)|_v}\geq1.
\]
The local argument at a simple root in the cited proof uses no irreducibility hypothesis on $Q$ and gives
\begin{equation}\label{eq:derivative}
    |\partial^\alpha y(0)|_v\leq c_v^{2|\alpha|-1}
    \qquad(|\alpha|>0).
\end{equation}

Return to the projections of Lemma~\ref{lem:comparison-projections}. Assume that their discriminants and the remaining original coordinates do not vanish at $x$. The projections are then \'etale at $x$. On a neighborhood of $x_i$ in the chart $V_{i0}\ne0$, the remaining affine coordinate functions are invertible. Write
\[
    w_{ij}=W_{ij}/V_{i0},\qquad z_{ij}=w_{ij}(x_i),\qquad
    \mathbf v_i=(V_{i1}/V_{i0},\ldots,V_{iu_i}/V_{i0}).
\]
The entries of $\mathbf v_i-\mathbf v_i(x_i)$ are local coordinates at $x_i$. To bound the coefficients of the coordinate equations after translation to $x_i$, put, for each place $v$ of $F$,
\[
    M_{iv}=\max_j|z_{ij}|_v,\qquad A_{iv}=|B_i|_vM_{iv}^{D_i}.
\]
Since $1=V_{i0}/V_{i0}$ is a constant linear combination of the $w_{ij}$, we have $M_{iv}\geq1$. Every affine projection coordinate has norm at most $M_{iv}$ at $x_i$, and $|B_i|_v\geq1$. With $Q_{ij}=Q_{ij,1}^{b_{ij}}$ as above, define
\[
    R_{ij}=\left(\frac{1}{b_{ij}!}\partial_Y^{b_{ij}}Q_{ij}\right)
    (1,\mathbf v_i(x_i),z_{ij}),\qquad
    c_i=\prod_j(z_{ij}^{b_{ij}}R_{ij})^{1/b_{ij}}.
\]
The discriminant condition gives $R_{ij}\ne0$. The product $c_i$ collects the derivative denominators in factor $i$, which will disappear from the global estimate by the product formula. The roots in its definition lie in the fixed field $F$ and do not depend on $d$. Derivatives below are divided derivatives in the local projection coordinates. The next lemma is the non-archimedean form of \cite[Corollary~5.1 and Lemma~5.2]{Rem05}; see also the correction in \cite[Lemma~5.1]{Dil20}.

\begin{samepage}
    \begin{lemma}\label{lem:laurent-derivatives}
        For every Laurent monomial $J$ in the coordinates $w_{ij}$, every multi-index $\kappa=(\kappa_i)$, and every place $v$ of $F$,
        \begin{equation}\label{eq:aux-laurent-derivative}
            |\partial^\kappa J(x)|_v\leq |J(x)|_v
            \prod_i A_{iv}^{2(N+1)|\kappa_i|}|c_i|_v^{-2|\kappa_i|}.
        \end{equation}
    \end{lemma}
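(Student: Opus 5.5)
I will reduce to a single coordinate function and then use the Leibniz rule. Since derivatives in different factors act on disjoint sets of variables, $\partial^\kappa=\prod_i\partial^{\kappa_i}$, where $\partial^{\kappa_i}$ involves only the local coordinates $\mathbf v_i-\mathbf v_i(x_i)$. A Laurent monomial $J$ factors as $\prod_iJ_i$, with $J_i$ a Laurent monomial in the $w_{ij}$ of factor $i$. Hence $\partial^\kappa J(x)=\prod_i\partial^{\kappa_i}J_i(x_i)$, and it suffices to prove, for each $i$,
\[
|\partial^{\kappa_i}J_i(x_i)|_v\leq|J_i(x_i)|_v\,A_{iv}^{2(N+1)|\kappa_i|}|c_i|_v^{-2|\kappa_i|}.
\]

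First step: single coordinates. Fix $i$ and $j$, and set $y=w_{ij}/z_{ij}$, so that $y(0)=1$. This normalized function satisfies the relation obtained from $Q_{ij}$ after translating the projection coordinates to $\mathbf v_i(x_i)$ and rescaling the last variable by $z_{ij}$. Since $Q_{ij}=Q_{ij,1}^{b_{ij}}$ and $R_{ij}\ne0$, the root is simple for $Q_{ij,1}$, but it has multiplicity $b_{ij}$ for $Q_{ij}$. So I apply \eqref{eq:derivative} to $Q_{ij,1}$ rather than to $Q_{ij}$. The simple-root derivative of $Q_{ij,1}$ at the point is, up to sign, $(z_{ij}^{b_{ij}}R_{ij})^{1/b_{ij}}$ divided by suitable powers: the derivative $\partial_Y^{b_{ij}}Q_{ij}/b_{ij}!$ equals $(\partial_YQ_{ij,1})^{b_{ij}}$ at the root.

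The coefficient norm of the translated and rescaled polynomial is controlled as follows. The ultrametric inequality bounds translation by projection coordinates of norm $\leq M_{iv}$. Total degree is $\leq D_i$, and the Gauss-norm multiplicativity of Lemma \ref{lem:comparison-projections} gives $|Q_{ij,1}|_v\leq|B_i|_v$. Together these bound the coefficient norm by $A_{iv}$, possibly times a factor $|z_{ij}|_v^{-D_i}$ that the normalization absorbs via $M_{iv}\geq1$. This yields a constant
\[
c_v\leq A_{iv}^{N+1}\,|c_i|_v^{-1}.
\]
The exponent $N+1$ leaves room because $|(z_{ij}^{b_{ij}}R_{ij})^{1/b_{ij}}|_v$ is only one factor of $c_i$. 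The other factors $(z_{ij'}^{b_{ij'}}R_{ij'})^{1/b_{ij'}}$ have norm at most $A_{iv}$, again by the ultrametric inequality, and there are at most $N+1$ coordinates $j'$. Then \eqref{eq:derivative} gives $|\partial^\alpha y(0)|_v\leq c_v^{2|\alpha|}$. For $1/y$ the same bound holds: $1/y$ satisfies the reversed polynomial, which has the same coefficient norm and the same simple-root derivative up to the unit $y(0)=1$.

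Second step: monomials via Leibniz. Write $J_i/J_i(x_i)$ as a product of the normalized factors $y^{\pm1}$, with multiplicities. By Leibniz, $\partial^{\kappa_i}$ of the product is a sum of products of divided derivatives whose orders add to $\kappa_i$. With divided derivatives no binomial coefficients appear, so the ultrametric inequality bounds the sum by the maximal product. Each term is bounded by $\prod c_v^{2|\alpha_\ell|}=c_v^{2|\kappa_i|}$, which gives the claimed inequality for factor $i$; multiplying over $i$ completes the proof.

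The main obstacle is the bookkeeping in the first step. It requires rescaling so that $|y_0|_v\leq1$ as \eqref{eq:derivative} needs, and pinning down exactly how $c_i$ and the power $N+1$ absorb the local constant uniformly at every place $v$. That includes the places where $|z_{ij}|_v<M_{iv}$, where the rescaling by $z_{ij}$ instead of by $M_{iv}$ must be compensated by the $M_{iv}^{D_i}$ inside $A_{iv}$.
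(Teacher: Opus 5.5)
Your proposal is correct and follows essentially the paper's argument: normalize to $w_{ij}/z_{ij}$, apply \eqref{eq:derivative} to the transformed irreducible factor (whose $Y$-derivative at the root has $b_{ij}$-th power $z_{ij}^{b_{ij}}R_{ij}$), treat $w_{ij}^{-1}$ via the reversed polynomial, bound each local constant by $A_{iv}^{N+1}|c_i|_v^{-1}$, and finish with the divided Leibniz rule and the ultrametric inequality. The bookkeeping you flag as the main obstacle does not arise: substituting $\mathbf v_i(x_i)+T$ and $z_{ij}Y$ into a polynomial of total degree at most $D_i$ introduces only nonnegative powers of quantities of norm at most $M_{iv}$, so the transformed coefficient norm is at most $|B_i|_vM_{iv}^{D_i}=A_{iv}$ at every place with no factor $|z_{ij}|_v^{-D_i}$ (which $M_{iv}\geq1$ could not have absorbed anyway), and $y(0)=1$ already gives $|y_0|_v\leq1$.
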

\end{samepage}

\begin{proof}
    Translate the projection coordinates to $x_i$ and replace $Y$ by $z_{ij}Y$ in the coordinate equation. Its coefficient norm is at most $A_{iv}$, and the branch $w_{ij}/z_{ij}$ takes the value $1$ at the origin. Since $Q_{ij}=Q_{ij,1}^{b_{ij}}$, the Gauss norm of the transformed irreducible equation is at most $A_{iv}^{1/b_{ij}}$. Its first derivative in $Y$ at this root, raised to the power $b_{ij}$, equals $z_{ij}^{b_{ij}}R_{ij}$. Applying \eqref{eq:derivative} to this simple root gives, for $|\alpha|>0$ and $\eta=1$,
    \[
        |\partial^\alpha w_{ij}^{\eta}(x_i)|_v
        \leq |z_{ij}^{\eta}|_v
        \left(\frac{A_{iv}^{1/b_{ij}}}{|(z_{ij}^{b_{ij}}R_{ij})^{1/b_{ij}}|_v}\right)^{2|\alpha|}.
    \]
    The ratio is at least $1$, so we may increase the exponent from $2|\alpha|-1$ to $2|\alpha|$. Apply the same argument to the reciprocal polynomial $Y^{D_i}Q_{ij}(1,\mathbf v_i(x_i)+T,z_{ij}/Y)$. Its coefficient norm is unchanged, and its relevant derivative at $Y=1$ differs only by a sign. The displayed estimate therefore also holds for $\eta=-1$.

    The product of these ratios over $j$ is at most $A_{iv}^{N+1}/|c_i|_v$. Write $J$ as a product of coordinates and inverse coordinates and distribute the divided derivatives among its factors. Each term is bounded by the right-hand side of \eqref{eq:aux-laurent-derivative}; the ultrametric inequality gives the same bound for their sum.
\end{proof}

The relevant order of vanishing weights differentiation in factor $i$ by $(dt_1a_i)^{-1}$. For a nonzero section $s_d$ of $\mathcal Q_d$, let $\sigma_d$ be the least value of
\[
    \sum_i\frac{|\kappa_i|}{dt_1a_i}
\]
for which the divided derivative of a local representative of $s_d$ at $x$ is nonzero. This is the weighted index of $s_d$ at $x$. Multiplication by a unit does not change the least weighted degree of a nonzero term, so the definition is independent of the local generator. Let
\[
    H_x=\sum_i a_i h(x_i),\qquad H_B=\sum_i a_i h(B_i).
\]
We apply the product-formula argument of \cite[proof of Proposition~5.3]{Rem05}. A small weighted index allows the differentiation terms involving $H_x$ to be absorbed into the left side of the height inequality.

\begin{samepage}
    \begin{lemma}\label{lem:height-index}
        Let $s_d$ be represented by the polynomial family $F_d$, and let $\kappa$ attain its weighted index $\sigma_d$. Then
        \begin{enumerate}[beginpenalty=10000]
            \item one has
            \begin{equation}\label{eq:aux-small-index-raw}
                d\varepsilon H_x-dh_{\mathcal M}(x)
                \leq h(F_d)+2(N+1)\sum_i|\kappa_i|\bigl(h(B_i)+D_i h(x_i)\bigr);
            \end{equation}
            \item if $\sigma_d\leq\varepsilon/[4t_1(N+1)\max_iD_i]$, then
            \begin{equation}\label{eq:aux-small-index}
                \frac{\varepsilon}{2}H_x
                \leq h_{\mathcal M}(x)+\frac{h(F_d)}{d}+\frac{\varepsilon}{2}H_B.
            \end{equation}
        \end{enumerate}
    \end{lemma}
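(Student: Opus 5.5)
The proof will follow R\'emond's product-formula argument: evaluate the first nonvanishing divided derivative of $s_d$ at $x$, bound it above at each place, and sum over places, so that the product formula turns the nonvanishing into an inequality of heights.

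\emph{The evaluated quantity.} Fix $\kappa$ attaining $\sigma_d$. Near $x$ we write $s_d$ as $F_{d,\lambda}/s_\lambda$ for an index $\lambda$ with $s_\lambda(x)\ne0$. After dividing numerator and denominator by $\prod_i V_{i0}^{\deg_i}$, both become polynomials in the affine coordinates $w_{ij}$. Because all divided derivatives of weighted order below $\sigma_d$ vanish, the Leibniz rule shows that $\partial^\kappa(F_{d,\lambda}/s_\lambda)(x)=\partial^\kappa(F_{d,\lambda})(x)/s_\lambda(x)$ in the dehomogenized chart. This value $\xi$ is a nonzero element of $F$, so $\sum_v\log|\xi|_v=0$.

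\emph{Local bounds.} The numerator $\partial^\kappa F_{d,\lambda}(x)$ is a sum of coefficient times $\partial^\kappa J(x)$ over monomials $J$. By Lemma~\ref{lem:laurent-derivatives} and the ultrametric inequality,
\[
|\partial^\kappa F_{d,\lambda}(x)|_v\leq|F_d|_v\max_J|J(x)|_v\prod_iA_{iv}^{2(N+1)|\kappa_i|}|c_i|_v^{-2|\kappa_i|},
\]
and $\max_J|J(x)|_v$ is at most the $v$-part of $\prod_i M_{iv}^{\deg_iF}$. For the denominator, take the maximum over $\lambda$ of $|s_\lambda(x)|_v$; this choice can vary with $v$, and the same argument applies for each chosen $\lambda$. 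This maximum equals $|\Sigma(x)|_v^d\prod_i M_{iv}^{d\varepsilon a_i}$ in normalized coordinates. The multidegree of $F_{d,\lambda}$ is $d t_1 a$ on monomial coordinates, and the coordinate sections of $\mathcal P^d$ give $h_{\mathcal P}$ at $x$ multiplied by $d$. Summing $\log$ over $v$ and dividing by $[F:K]$ turns $\sum_v\log M_{iv}$ into $h(x_i)$. It turns $\log|B_i|_v$ into $h(B_i)$, so that $\log A_{iv}$ contributes $h(B_i)+D_ih(x_i)$. The $c_i$ terms vanish by the product formula, since $c_i\in F^\ast$. What remains is $dh_{\mathcal P}(x)-d\,h(\Sigma(x))-d\varepsilon H_x$ on one side and $h(F_d)$ plus the derivative terms on the other. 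This yields \eqref{eq:aux-small-index-raw} once signs are arranged: the $h_{\mathcal P}$ term and the $\Sigma$ term combine into $dh_{\mathcal M}(x)$, and the $\mathcal N_a^{-d\varepsilon}$ twist supplies $d\varepsilon H_x$.

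\emph{Deducing part (2).} We have $|\kappa_i|=dt_1a_i\cdot(\text{weighted share})$, so $\sum_i|\kappa_i|D_ih(x_i)\leq dt_1\sigma_d\max_iD_i\,H_x$. The same bound with $h(B_i)$ gives $dt_1\sigma_d\max_iD_i\,H_B$. Under the hypothesis on $\sigma_d$, the factor $2(N+1)dt_1\sigma_d\max_iD_i$ is at most $d\varepsilon/2$. We absorb the $H_x$ contribution into the left side of \eqref{eq:aux-small-index-raw}, leaving $d(\varepsilon/2)H_x$, bound the $H_B$ contribution by $d(\varepsilon/2)H_B$, and divide by $d$ to obtain \eqref{eq:aux-small-index}.

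The main obstacle is the bookkeeping in the first step. We must check that the sign conventions and the degrees in $w_{ij}$ make the $M_{iv}$ exponents cancel exactly, leaving only $-d\varepsilon a_i$ net per factor in the normalization where $V_{i0}=1$. We must also check that the translation of $\max_\lambda|s_\lambda(x)|_v$ into $h(\Sigma(x))$ and $h_{\mathcal P}$ is legitimate even though only the monomial coordinate sections define $h_{\mathcal P}$.
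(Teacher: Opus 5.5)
Your overall architecture matches the paper's proof: the first nonvanishing divided derivative, the Leibniz rule to isolate it, Lemma~\ref{lem:laurent-derivatives}, and the product formula (also for the $c_i$). Your deduction of part (2) from part (1) is also fine. Your place-by-place choice of $\lambda$ maximizing $|s_\lambda(x)|_v$ is exactly the paper's choice of $\sigma_{k_v}$ and $W_{i\ell_{vi}}$.

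\textbf{The gap.} The numerator bound you actually state does not give \eqref{eq:aux-small-index-raw}. If you bound each monomial by $|J(x)|_v\le\prod_iM_{iv}^{dt_1a_i}$, then after summing over places the numerator contributes $dt_1H_x$, not $dh_{\mathcal P}(x)$. You would only obtain
\[
d\varepsilon H_x-d\bigl(t_1H_x-h(\Sigma(x))\bigr)\le h(F_d)+2(N+1)\sum_i|\kappa_i|\bigl(h(B_i)+D_ih(x_i)\bigr).
\]
Since the first injection is monomial, $h_{\mathcal P}(x)\le t_1H_x$, usually with a large gap. In the application, $h_{\mathcal M}$ is meant to track the heights of the differences, which can be far smaller than $H_x$. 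So your inequality is strictly weaker than (1). It also cannot yield (2), because $t_1\ge1>\varepsilon$.

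The cancellation of $M_{iv}$-exponents that you list as the main obstacle does not occur. The $M_{iv}$ should enter only in two places: through the twist $\prod_iW_{i\ell_i}^{d\varepsilon a_i}$, which gives $d\varepsilon H_x$, and through $A_{iv}$ in the derivative bound. The denominator $\max_\lambda|s_\lambda(x)|_v$ produces $h(\Sigma(x))$ and $d\varepsilon H_x$, never $h_{\mathcal P}$. So $h_{\mathcal P}$ has to come from the numerator.

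\textbf{The fix.} The chosen basis of $H^0(\widetilde Y,\mathcal P^d)$ consists of degree-$d$ monomials in the coordinate sections $\Xi_j$ of $\mathcal P$. Hence in your chart
\[
|J(x)|_v\le\max_j\Bigl|\Xi_j(x)\Big/\textstyle\prod_iV_{i0}(x_i)^{t_1a_i}\Bigr|_v^{d}.
\]
Summing the logarithm of the right-hand side over places gives $[F:K]\,dh_{\mathcal P}(x)$, since the height does not depend on the choice of homogeneous coordinates.

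Equivalently, as the paper does, normalize at each place by a coordinate $\Xi_{j_v}$ of maximal norm. Use the place-adapted generator
\[
s_v=\Xi_{j_v}^d\sigma_{k_v}^{-d}\prod_iW_{i\ell_{vi}}^{-d\varepsilon a_i},
\]
so that every monomial satisfies $|J(x)|_v\le1$. Then the identity
\[
\frac{1}{[F:K]}\sum_v\log|\alpha_v(x)|_v=dh_{\mathcal M}(x)-d\varepsilon H_x
\]
holds exactly. With this replacement your argument proves (1), and your part (2) then follows as written.
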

\end{samepage}

\begin{proof}
    At each place choose a projective coordinate $\Xi_{j_v}$ of $\mathcal P$ of maximal norm, a section $\sigma_{k_v}$ in $\Sigma$ of maximal norm, and original coordinates $W_{i\ell_{vi}}$ of maximal norm. They define a nonvanishing local generator
    \[
        s_v=\Xi_{j_v}^d\sigma_{k_v}^{-d}\prod_iW_{i\ell_{vi}}^{-d\varepsilon a_i}
    \]
    of $\mathcal Q_d$ near $x$. Fix one rational local generator $s_0$ and write $s_v=\alpha_vs_0$ and $s_d=\alpha s_0$. The coordinate heights give
    \[
        dh_{\mathcal M}(x)-d\varepsilon H_x
        =\frac{1}{[F:K]}\sum_v\log|\alpha_v(x)|_v.
    \]
    The representing family gives $\alpha=\alpha_v F_{k_v,\ell_v}(\Xi/\Xi_{j_v})$. All derivatives of smaller weighted order vanish, so
    \[
        \partial^\kappa\alpha(x)=\alpha_v(x)
        \partial^\kappa\bigl(F_{k_v,\ell_v}(\Xi/\Xi_{j_v})\bigr)(x).
    \]
    Apply the product formula to the nonzero left side. Each monomial on the right is a Laurent monomial in the original $w_{ij}$, since the injection for $\mathcal P$ is monomial. Its value at $x$ has norm at most $1$ by the choice of $\Xi_{j_v}$. Lemma~\ref{lem:laurent-derivatives} bounds its derivative. Sum over the places and apply the product formula to each fixed nonzero $c_i$. Since
    \[
        \frac{1}{[F:K]}\sum_v\log M_{iv}=h(x_i),
    \]
    this gives \eqref{eq:aux-small-index-raw}.

    Under the bound in (2), we have $|\kappa_i|\leq d\varepsilon a_i/[4(N+1)D_i]$. Substitution in \eqref{eq:aux-small-index-raw}, followed by division by $d$, proves \eqref{eq:aux-small-index}.
\end{proof}

\subsection{The geometric product estimate}\label{sec:product-estimates}

We use a large weighted index to cut one factor by a proper hypersurface whose degree and height bounds are independent of the auxiliary degree. The algebraic part of R\'emond's product estimate gives the product structure. We supply the geometric height bound by intersection theory over the base curve.

\subsubsection{Intersections with multiplicities}

Let $s\geq1$ and $n_1,\ldots,n_s\geq0$ be integers. Set $\mathcal P=C_F\times\prod_{i=1}^s\mathbb P^{n_i}$, let $\operatorname{pr}_{C_F}$ be its projection to $C_F$, and let $H_i$ be the hyperplane class from its $i$th projective factor. Write $\mathcal P_F$ for the generic fiber, $n=(n_1,\ldots,n_s)$, $|\alpha|=\sum_i\alpha_i$, $\alpha!=\prod_i\alpha_i!$, and $H^\alpha=\prod_i H_i^{\alpha_i}$. For a subvariety $V$ of codimension $c$ in the generic fiber, let $\mathcal V$ be its closure and put
\[
    d_\alpha(V)=\mathcal V\cdot[\text{fiber}]\cdot H^{n-\alpha}\quad(|\alpha|=c),
    \qquad
    h_\alpha(V)=\frac{\mathcal V\cdot H^{n-\alpha}}{[F:K]}\quad(|\alpha|=c-1).
\]
We use only indices with $0\leq\alpha_i\leq n_i$. The numbers $d_\alpha(V)$ are the multidegrees of $V$, and the numbers $h_\alpha(V)$ are the intersection numbers over $C_F$ used to define its heights. All are nonnegative. Multiprojective Chow heights are fixed linear combinations of the $h_\alpha(V)$.

The geometric B\'ezout estimate corresponding to \cite[Proposition~3.2]{Rem01} follows directly from intersection theory. Let $P$ be a nonzero multihomogeneous polynomial of multidegree $(\delta_1,\ldots,\delta_s)$ that does not vanish identically on $V$. Normalizing one coefficient to $1$ and clearing poles gives a divisor of class
\[
    \operatorname{pr}_{C_F}^*D_P+\sum_i\delta_iH_i,
    \qquad \deg D_P=[F:K]h(P).
\]
The closure of the generic intersection cycle is bounded by this effective intersection cycle. Let $e_i$ denote the $i$th coordinate vector. Intersecting with the hyperplane classes gives, for $|\alpha|=c$,
\begin{equation}\label{eq:bezout}
    h_\alpha(V\cdot V(P))
    \leq\sum_i\delta_i h_{\alpha-e_i}(V)+h(P)d_\alpha(V),
\end{equation}
where $h_{\alpha-e_i}(V)=0$ if $\alpha_i=0$. Iterating this estimate bounds successive proper intersections with their intersection multiplicities.

The index gives a lower bound for the Samuel multiplicity of a component of the ideal generated by the derivatives. We must therefore retain that multiplicity in both estimates. The following is the geometric counterpart of \cite[Proposition~3.2]{Rem01}. Write $\delta^\alpha=\prod_i\delta_i^{\alpha_i}$.

\begin{samepage}
    \begin{lemma}\label{lem:product-intersections}
        Let $\mathcal I$ be a nonzero ideal sheaf on $\mathcal P_F$ generated by multihomogeneous polynomials of multidegrees at most $\delta=(\delta_i)$, with coefficient poles bounded by one effective divisor $E$ on $C_F$. Let $T$ be a component of its zero locus of codimension $c\geq1$, and put $\mu=e(\mathcal I_{\eta_T},\mathcal O_{\mathcal P_F,\eta_T})$, the Samuel multiplicity at the generic point $\eta_T$ of $T$. Then
        \begin{enumerate}
            \item for $|\alpha|=c$,
            \begin{equation}\label{eq:product-geometric-degree}
                \mu d_\alpha(T)\leq\frac{c!}{\alpha!}\delta^\alpha;
            \end{equation}
            \item for $|\alpha|=c-1$,
            \begin{equation}\label{eq:product-geometric-height}
                \mu h_\alpha(T)\leq\frac{c!}{\alpha!}\delta^\alpha
                \frac{\deg E}{[F:K]}.
            \end{equation}
        \end{enumerate}
    \end{lemma}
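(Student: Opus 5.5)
The plan is to reduce to a proper intersection by choosing generic linear combinations of the generators, following the algebraic part of \cite[Proposition~3.2]{Rem01}, and then to read off both bounds from one intersection-theoretic computation on $\mathcal P=C_F\times\prod_i\mathbb P^{n_i}$.

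\textbf{Step 1: a proper intersection.} Localize at $\eta_T$. Since $T$ is a component of the zero locus of $\mathcal I$, the ideal $\mathcal I_{\eta_T}$ is primary to the maximal ideal of the $c$-dimensional local ring $\mathcal O_{\mathcal P_F,\eta_T}$. After multiplying generators by suitable monomials of the appropriate factors, we may assume every generator has multidegree exactly $\delta$; this preserves the ideal near $\eta_T$. Take $c$ generic constant linear combinations $P_1,\dots,P_c$ of these generators, with coefficients in $k$. Such combinations generate a reduction of $\mathcal I_{\eta_T}$ (the infinite field suffices), so the theorem of Northcott--Rees gives
\[
e(\mathcal I_{\eta_T})=e(P_1,\dots,P_c)_{\eta_T}.
\]
By Serre's formula this equals the intersection multiplicity $i(T;V(P_1)\cdots V(P_c))$. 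It also follows that $T$ is a proper component of $V(P_1)\cap\dots\cap V(P_c)$. Because the combining coefficients are constant, each $P_j$ has coefficient poles bounded by the same divisor $E$.

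\textbf{Step 2: the degree bound.} On the generic fiber, each $V(P_j)$ has class $\sum_i\delta_iH_i$. The product of these $c$ classes contains $\mu[T]$ plus effective cycles. Intersecting with $H^{n-\alpha}$ and expanding $(\sum_i\delta_iH_i)^c$ by the multinomial theorem, the coefficient of $H^\alpha$ is $\frac{c!}{\alpha!}\delta^\alpha$. Since $H^{n-\alpha}\cdot H^\alpha$ is a point on $\mathbb P^n$, this gives \eqref{eq:product-geometric-degree}.

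\textbf{Step 3: the height bound.} Over $C_F$, clear the poles of each $P_j$ by $E$. This gives an effective divisor $\mathcal D_j$ of class $\operatorname{pr}^*E+\sum_i\delta_iH_i$ with no vertical components beyond those forced by $E$. I expect this to be the main obstacle: the intersection $\mathcal D_1\cdots\mathcal D_c$ may fail to be proper along vertical components, so positivity must be justified carefully. I would first try to choose the generic combinations so that each successive $\mathcal D_j$ meets the previous intersection properly away from finitely many fibers. Then I would use the excess/refined intersection product, which is still represented by a nonnegative cycle because the $H_i$ are nef and $\operatorname{pr}^*E$ is effective and nef on vertical pieces. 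The horizontal part of this cycle contains $\mu\overline T$.

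Granting this, we intersect with $H^{n-\alpha}$ for $|\alpha|=c-1$. Each $H_i$ and the pullback of a point are nef, so the remaining components contribute nonnegatively. Expanding
\[
(\operatorname{pr}^*E+\textstyle\sum_i\delta_iH_i)^c,
\]
the terms containing $\operatorname{pr}^*E$ twice vanish, since $\operatorname{pr}^*E^2=0$. The terms with $\operatorname{pr}^*E$ once contribute $c\cdot\frac{(c-1)!}{\alpha!}\delta^\alpha\deg E=\frac{c!}{\alpha!}\delta^\alpha\deg E$. The terms without $\operatorname{pr}^*E$ give $(\sum_i\delta_iH_i)^c\cdot H^{n-\alpha}$, which is zero on $\prod_i\mathbb P^{n_i}$ because the total degree $n+1$ exceeds its dimension. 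Dividing by $[F:K]$ yields \eqref{eq:product-geometric-height}.
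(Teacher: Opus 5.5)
Your Step~1 and your final expansion of $c_1(\mathcal L)^c$ are correct and match the paper; that expansion uses $[E]^2=0$ and $(\sum_i\delta_iH_i)^c\cdot H^{n-\alpha}=0$. There is, however, a genuine gap: the global positivity statement on which both bounds rest is not established, and the route you propose for it fails.

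The excess is not only vertical. The zero locus of $\mathcal I$ may have horizontal components of codimension less than $c$ besides $T$, and every combination $P_j$ vanishes on them. For instance, take $\mathcal I=(xy,xz)$ on $\mathbb P^2$ and $T=\{y=z=0\}$. Any two combinations vanish on the line $x=0$, so no choice of $P_1,\dots,P_c$ meets properly away from finitely many fibers.

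The same issue already affects Step~2. The claim that the product contains $\mu[T]$ plus effective cycles needs a positivity theorem for the excess contributions of a refined intersection. On the generic fiber, Fulton's positivity theorem would supply this, because $\mathcal O(\delta)$ is globally generated. Over $C_F$, however, the relevant bundle $\operatorname{pr}_{C_F}^*\mathcal O_{C_F}(E)\otimes\mathcal O(\delta)$ is only nef. Your remark that the $H_i$ are nef and $\operatorname{pr}^*E$ is effective does not show that the excess contributions have nonnegative degree. Since you write ``granting this'', the crux of part~(2) is missing.

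The paper avoids excess intersection by not fixing the $c$ combinations in advance. Put $\mathcal L=\operatorname{pr}_{C_F}^*\mathcal O_{C_F}(E)\otimes\mathcal O(\delta_1,\ldots,\delta_s)$. Let $B$ be the common zero locus of the sections, namely the generators times all monomials raising them to multidegree $\delta$, and set $R_0=[\mathcal P]$. At step $j$, cut $R_{j-1}$ by a $k$-linear combination of these sections that is nonzero on each component of $R_{j-1}$. This is possible because no such component lies in $B$ and $k$ is infinite. Write the resulting effective cycle as $A_j+R_j$, where $A_j$ is the part supported in $B$. This gives
\[
    c_1(\mathcal L)^c[\mathcal P]=R_c+\sum_{j=1}^c c_1(\mathcal L)^{c-j}A_j .
\]
Every term is a product of nef classes with an effective cycle, so its intersections with $[\mathrm{fiber}]\cdot H^{n-\alpha}$ and with $H^{n-\alpha}$ are nonnegative.

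Let $\mathcal T$ be the closure of $T$. Because $T$ is a component of the generic base locus, no component of $A_j$ with $j<c$ contains $\eta_T$. Hence the first $c$ sections form a system of parameters in $\mathcal O_{\mathcal P_F,\eta_T}$. So $\mathcal T$ occurs in $A_c$ with multiplicity equal to their colength, which is at least $\mu$ because the parameter ideal lies in $\mathcal I_{\eta_T}$. Thus the Northcott--Rees equality is not needed; $e(J)\geq e(\mathcal I)$ for $J\subseteq\mathcal I$ suffices. Your multinomial expansion then yields \eqref{eq:product-geometric-degree} and \eqref{eq:product-geometric-height} together.
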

\end{samepage}

\begin{proof}
    Let $\mathcal T$ be the closure of $T$. Multiplying each generator by all monomials that raise its multidegree to $\delta$ preserves the ideal sheaf, since locally one such monomial is a unit. The resulting sections belong to the nef line bundle
    \[
        \mathcal L=\operatorname{pr}_{C_F}^*\mathcal O_{C_F}(E)
        \otimes\mathcal O(\delta_1,\ldots,\delta_s).
    \]
    Let $B$ be the common zero locus of these sections and put $R_0=[\mathcal P]$. Inductively, cut $R_{j-1}$ by a $k$-linear combination of these sections that is nonzero on each of its components. Such a choice exists because no component of $R_{j-1}$ lies in $B$ and $k$ is infinite. Write the intersection cycle as $A_j+R_j$, where $A_j$ consists of the components contained in $B$, with their multiplicities. The identities
    \[
        c_1(\mathcal L)R_{j-1}=A_j+R_j
    \]
    give
    \[
        c_1(\mathcal L)^c[\mathcal P]
        =R_c+\sum_{j=1}^c c_1(\mathcal L)^{c-j}A_j.
    \]
    No component discarded before the $c$th intersection contains $\eta_T$, since $T$ is a component of the generic base locus. The first $c$ sections therefore form a system of parameters in the regular local ring $\mathcal O_{\mathcal P_F,\eta_T}$. Their local intersection multiplicity is their quotient length, which is at least $\mu$ by \cite[Lemma~3.1]{Rem01}. Hence $A_c$ contains $\mu\mathcal T$.

    Intersect the last cycle identity with respectively $[\mathrm{fiber}]H^{n-\alpha}$ and $H^{n-\alpha}$. All the other terms have nonnegative degree, because the cycles are effective and $\mathcal L$, the $H_i$, and the fiber class are nef. Finally,
    \[
        c_1(\mathcal L)^c
        =\left(\sum_i\delta_iH_i\right)^c
        +c\operatorname{pr}_{C_F}^*[E]
        \left(\sum_i\delta_iH_i\right)^{c-1},
    \]
    since $[E]^2=0$. The coefficients of these two terms give \eqref{eq:product-geometric-degree} and \eqref{eq:product-geometric-height}.
\end{proof}

\subsubsection{Components of the index loci}

Let $P$ be a nonzero multihomogeneous polynomial on $\prod_{i=1}^s\mathbb P^{n_i}$ with positive multidegrees $\delta_i$. Its \emph{weighted index} at a point is the minimum of
\[
    \sum_{i=1}^s\frac{|\alpha_i|}{\delta_i}
\]
over the divided derivatives whose values at the point are nonzero, after dehomogenization in an affine chart containing that point. Here $\alpha_i$ is the multi-index in the $i$th factor, and derivatives are divided as in Section~\ref{sec:local-differentiation}. This agrees with the local-section definition of weighted index used there. Let $Z_\epsilon(P)$ be the closed locus where this index exceeds $\epsilon$. By the Euler identities for a multihomogeneous polynomial, it is also cut out by the homogeneous divided derivatives of weighted order at most $\epsilon$, as in \cite[\S1, p.~289]{Rem01}.

A component occurring at two distinct index levels has a large Samuel multiplicity. If the successive polynomial degrees are sufficiently separated, the resulting multidegree inequalities force the component to be a product. We use the multiplicity estimate in \cite[Proposition~3.1]{Rem01} and the multidegree argument in \cite[proof of Proposition~2.1, pp.~294--296]{Rem01}; Lemma~\ref{lem:product-intersections} supplies the geometric height estimates. For $1\leq c\leq\sum_i n_i$, put
\[
    \phi(c)=\max_{|a|=c}\frac{c!}{a!},
\]
where the maximum is over the $s$-component nonnegative multi-indices.

\begin{samepage}
    \begin{lemma}\label{lem:product-common-component}
        Let $T$ be a common geometric irreducible component of $Z_\sigma(P)$ and $Z_{\sigma+\tau}(P)$, where $\sigma\geq0$ and $\tau>0$, and put $c=\operatorname{codim}T$. Suppose that $\delta_i/\delta_{i+1}>1$ for $1\leq i<s$. Then
        \begin{enumerate}
            \item $\phi(c)/\tau^c\geq1$;
            \item if $\delta_i/\delta_{i+1}>\phi(c)/\tau^c$ for $1\leq i<s$, then $T=T_1\times\cdots\times T_s$ for geometrically integral subvarieties $T_i\subset\mathbb P^{n_i}$. Writing $a_i=\operatorname{codim}T_i$, we have
            \begin{equation}\label{eq:product-common-component}
                \prod_i\deg T_i\leq\frac{\phi(c)}{\tau^c},\qquad
                \delta_i h(T_i)\prod_{j\ne i}\deg T_j
                \leq a_i\frac{\phi(c)}{\tau^c}h(P)\quad(a_i>0).
            \end{equation}
        \end{enumerate}
    \end{lemma}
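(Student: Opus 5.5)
We follow Rémond's argument for \cite[Proposition~2.1]{Rem01}. The multiplicity estimate \cite[Proposition~3.1]{Rem01} supplies the lower bound, and Lemma~\ref{lem:product-intersections} supplies the upper bounds for degree and height.

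Let $\eta_T$ be the generic point of $T$, and let $\mathcal I_\sigma$ and $\mathcal I_{\sigma+\tau}$ be the ideals generated by the homogeneous divided derivatives of weighted order at most $\sigma$ and at most $\sigma+\tau$. Since $T$ is a component of both loci, \cite[Proposition~3.1]{Rem01} applies to the Samuel multiplicity $\mu=e(\mathcal I_{\sigma+\tau},\mathcal O_{\eta_T})$ of the second ideal. Passing from the first locus to the second forces additional vanishing of weighted order $\tau$ along the $c$ normal directions at $\eta_T$. We therefore aim for a lower bound of the form
\[
    \mu\geq\tau^c\prod_i\delta_i^{\beta_i}
\]
for every multi-index $\beta$ with $|\beta|=c$ measuring the codimension of $T$ in the factor directions.

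The generators of $\mathcal I_{\sigma+\tau}$ have multidegree at most $\delta$. Their coefficients are divided derivatives of $P$ with integral binomial factors, so their coefficient poles are bounded by the divisor $E$ of poles of $P$, and $\deg E=[F:K]h(P)$. Lemma~\ref{lem:product-intersections}(1) then gives
\[
    \mu d_\alpha(T)\leq\frac{c!}{\alpha!}\delta^\alpha\qquad(|\alpha|=c),
\]
and Lemma~\ref{lem:product-intersections}(2) gives
\[
    \mu h_\alpha(T)\leq\frac{c!}{\alpha!}\delta^\alpha h(P)\qquad(|\alpha|=c-1).
\]
Choose $\alpha$ to be the lexicographically extremal multi-index for which $d_\alpha(T)\neq0$. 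Then $d_\alpha(T)\geq1$, and comparing the two bounds yields $\tau^c\leq c!/\alpha!\leq\phi(c)$. This proves~(1).

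For~(2) we use the multidegree argument of \cite[pp.~294--296]{Rem01}. Write $a_i=\dim\mathbb P^{n_i}-\dim\operatorname{pr}_i(T)$. Suppose $T$ is not the product of its projections. Then some multidegree $d_\alpha(T)$ with $\alpha$ different from $(a_1,\ldots,a_s)$ and lexicographically earlier is nonzero. In the quotient $\delta^\alpha/\mu$ this costs a factor $\delta_i/\delta_{i+1}$, and the separation hypothesis $\delta_i/\delta_{i+1}>\phi(c)/\tau^c$ makes this incompatible with the bounds above. Hence $T=\prod_iT_i$, where $T_i=\operatorname{pr}_i(T)$ is geometrically integral because $T$ is a geometric component. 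Moreover, $d_{(a_i)}(T)=\prod_i\deg T_i$, and dividing the degree bound by $\mu\geq\tau^c\delta^{(a_i)}$ gives the first inequality in \eqref{eq:product-common-component}.

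For the height inequality, take $\alpha=(a_j)-e_i$. The multiprojective height of a product satisfies
\[
    h_\alpha(T)=h(T_i)\prod_{j\ne i}\deg T_j,
\]
since intersection numbers over $C_F$ are multiplicative across factors. Dividing the height bound by the lower bound for $\mu$ leaves the factor $\delta^\alpha/\delta^{(a_j)}=\delta_i^{-1}$. The multinomial ratio $c!/\alpha!$ differs from $c!/(a_j)!$ by the factor $a_i$, which gives the second inequality.

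The main obstacle is obtaining the exact lower bound for $\mu$ in the weighted setting. I would first try to use \cite[Proposition~3.1]{Rem01} verbatim, since it is algebraic and field-independent.
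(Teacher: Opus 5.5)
Your architecture is the paper's: R\'emond's multiplicity estimate \cite[Proposition~3.1]{Rem01} for the lower bound, Lemma~\ref{lem:product-intersections} for the upper bounds, then the multidegree argument. Your final height computation, with $\alpha=a-e_i$ and the factor $a_i$ coming from $c!/\alpha!$, is correct. However, two central steps fail.

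First, you apply the multiplicity estimate to the wrong ideal. Its hypothesis is that every derivative of weighted order at most $\tau$ of an element of the ideal vanishes at $\eta_T$. This holds for $\mathcal I_\sigma$: by the Leibniz rule such derivatives lie in $\mathcal I_{\sigma+\tau}$, which vanishes at $\eta_T$. For $\mathcal I_{\sigma+\tau}$ you would need $\eta_T\in Z_{\sigma+2\tau}(P)$, which is not known, and the bound you want is false. Take $s=1$, $n_1=1$, $P=X^kY^{\delta-k}$ with $k\geq3$, $\sigma=0$ and $(k-1)/\delta\leq\tau<k/\delta$. Then $T=\{(0:1)\}$ is a component of both $Z_0(P)$ and $Z_\tau(P)$. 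But $\mathcal I_\tau$ is locally $(x)$, so its multiplicity is $1<\tau\delta$. You must take $\mu=e(\mathcal I_\sigma)$; Lemma~\ref{lem:product-intersections} applies to this ideal just as well.

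Second, and more seriously, you never fix the exponent vector in the lower bound, and the whole argument depends on it. The phrase ``for every multi-index $\beta$'' does not describe what the estimate gives. It gives $\mu\geq\tau^c\prod_i\delta_i^{b_i}$ for one specific vector:
\begin{itemize}
\item $b_i=t_i-t_{i+1}$, where $t_i$ is the codimension of the projection of $T$ to the factors $i,\ldots,s$, and $t_{s+1}=0$.
\end{itemize}
The upper bound must be used at a different vector, for which $d_a(T)>0$:
\begin{itemize}
\item $a_i=p_i-p_{i-1}$, where $p_i$ is the codimension of the projection of $T$ to the first $i$ factors, and $p_0=0$.
\end{itemize}
The comparison between them is the telescoping identity
\[
    \frac{\delta^a}{\delta^b}=\prod_{i<s}\Bigl(\frac{\delta_i}{\delta_{i+1}}\Bigr)^{p_i+t_{i+1}-c}.
\]
It comes with $p_i+t_{i+1}\leq c$, which holds because $T$ lies in the product of its projections to the first $i$ and the last $s-i$ factors. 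Together these give
\[
    1\leq d_a(T)\leq\frac{c!}{a!\,\tau^c}\prod_{i<s}\Bigl(\frac{\delta_i}{\delta_{i+1}}\Bigr)^{p_i+t_{i+1}-c}.
\]
Part~(1) follows only because the exponents are nonpositive and $\delta_i/\delta_{i+1}>1$. Your version of~(1) never uses this hypothesis, which signals the missing step: ``comparing the two bounds'' silently assumes $\delta^\alpha\leq\delta^\beta$.

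For~(2), the stronger separation forces each integer exponent $p_i+t_{i+1}-c$ to be $0$. Equality at cut $i$ means, by irreducibility and dimension count, that $T$ equals the product of its two projections. Applying this at every cut gives $T=\prod_iT_i$, and then $a=b=(\operatorname{codim}T_i)_i$.

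Your contrapositive sketch does not work as written. When $T$ is not a product, your single-factor codimensions $a_i=n_i-\dim\operatorname{pr}_i(T)$ sum to less than $c$. So $d_{(a_i)}(T)$ is not a multidegree of $T$, and the claimed ``cost of a factor $\delta_i/\delta_{i+1}$'' is not derived from anything.
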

\end{samepage}

\begin{proof}
    Enlarge $F$ so that $T$ and the factors that occur below are geometrically integral over $F$. Normalized heights are unchanged.

    Normalize one coefficient of $P$ to $1$, and let $E$ be the common pole divisor of its coefficients. Then $\deg E=[F:K]h(P)$. Divided differentiation multiplies coefficients by integers, whose valuations are nonnegative. Thus $E$ also bounds the poles of the derivative family, to which Lemma~\ref{lem:product-intersections} applies.

    For $1\leq i\leq s$, let $t_i$ be the codimension of the projection of $T$ to the factors $i,\ldots,s$; put $t_{s+1}=0$ and $b_i=t_i-t_{i+1}$. Let $\mathcal I_\sigma$ be the ideal generated by the derivatives of weighted order at most $\sigma$, and let $\mu$ be its Samuel multiplicity at $\eta_T$. R\'emond's algebraic multiplicity estimate \cite[Proposition~3.1]{Rem01} gives
    \begin{equation}\label{eq:product-samuel}
        \mu\geq\tau^c\prod_i\delta_i^{b_i}.
    \end{equation}
    The derivative hypothesis in that proposition follows from the Leibniz rule: derivatives of $\mathcal I_\sigma$ of weighted order at most $\tau$ belong to the ideal defining $Z_{\sigma+\tau}(P)$, and hence vanish at $\eta_T$. The proposition and its proof in \cite[\S4, pp.~296--297]{Rem01} are algebraic over a characteristic-zero field, so they apply over $F$.

    To obtain the product structure, we follow the multidegree calculation in \cite[proof of Proposition~2.1, pp.~294--296]{Rem01}. Let $p_i$ be the codimension of the projection of $T$ to the first $i$ factors, put $p_0=0$, and set $a_i=p_i-p_{i-1}$. For this multi-index $a$, we have $d_a(T)>0$, as in the cited proof. Moreover $p_i+t_{i+1}\leq c$, because $T$ lies in the product of its two projections. Combining \eqref{eq:product-geometric-degree} and \eqref{eq:product-samuel} gives
    \begin{equation}\label{eq:product-factor-test}
        1\leq d_a(T)
        \leq\frac{c!}{a!\tau^c}
        \prod_{i<s}\left(\frac{\delta_i}{\delta_{i+1}}\right)^%
        {p_i+t_{i+1}-c}.
    \end{equation}
    Since $p_i+t_{i+1}-c\leq0$ and the successive degree ratios exceed $1$, \eqref{eq:product-factor-test} gives $1\leq\phi(c)/\tau^c$. If every successive degree ratio exceeds $\phi(c)/\tau^c$, then \eqref{eq:product-factor-test} forces $p_i+t_{i+1}=c$ for every $i$. The inclusion of $T$ in the product of its two projections is then an equality, by irreducibility and equality of dimensions. Applying this to all cuts gives $T=\prod_iT_i$.

    For this product, $a_i=\operatorname{codim}T_i=b_i$ and
    \[
        d_a(T)=\prod_i\deg T_i,\qquad
        h_{a-e_i}(T)=h(T_i)\prod_{j\ne i}\deg T_j\quad(a_i>0).
    \]
    For the second identity, each factor closure is integral and hence flat over the smooth curve $C_F$. Their fiber product is flat with integral generic fiber. Its local algebras inject into their generic fibers, so it is integral and equals the closure of $T$. The identity follows from the projection formula. Equations \eqref{eq:product-geometric-degree}--\eqref{eq:product-samuel} give \eqref{eq:product-common-component}.
\end{proof}

We now choose index levels so that some two successive loci have a common component. This gives the following geometric version of \cite[Corollary~1.2]{Rem01}, with bounds independent of the field of definition.

\begin{samepage}
    \begin{lemma}\label{lem:geometric-product}
        Suppose $s\geq2$ and $n_i\geq1$, and put $u=\sum_i n_i$. For $\epsilon>0$, set $B_\epsilon=(s/\epsilon)^u$. If
        \begin{equation}\label{eq:productconditions}
            0<\epsilon<s\left(\frac{\log(u+1)}{2u^2}\right)^u,
            \qquad \frac{\delta_i}{\delta_{i+1}}>B_\epsilon\quad(1\leq i<s),
        \end{equation}
        every geometric irreducible component of $Z_\epsilon(P)$ is contained in a proper product $Z_1\times\cdots\times Z_s$ of geometrically integral projective subvarieties satisfying
        \begin{enumerate}[beginpenalty=10000]
            \item $\prod_i\deg Z_i\leq B_\epsilon$;
            \item for each $i$ with $Z_i\subsetneq\mathbb P^{n_i}$,
            \begin{equation}\label{eq:product}
                \delta_i h(Z_i)\leq n_i B_\epsilon h(P).
            \end{equation}
        \end{enumerate}
        The factors may be defined over a finite extension of $K$; their heights use the normalization above.
    \end{lemma}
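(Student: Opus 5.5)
The proof will follow R\'emond's derivation of \cite[Corollary~1.2]{Rem01} from \cite[Proposition~2.1]{Rem01}. The new ingredient is Lemma~\ref{lem:product-common-component}, which replaces the arithmetic height bounds.

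The plan is to build a chain of index levels
\[
    0<\epsilon=\sigma_0<\sigma_1<\cdots<\sigma_u,\qquad \tau_j=\sigma_j-\sigma_{j-1},
\]
and pigeonhole on codimension. Let $W$ be a geometric irreducible component of $Z_\epsilon(P)$. The loci $Z_{\sigma_j}(P)$ decrease as $j$ grows. Descend from $W$: at each step choose a component of $Z_{\sigma_j}(P)$ that contains the previous one, or more precisely run the argument along the chain of components containing a fixed generic point of $W$. Since codimension lies in $\{1,\ldots,u\}$, two consecutive levels $\sigma_{j-1},\sigma_j$ must share a common component $T\supseteq$ (the relevant component) with some codimension $c$.

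Working top-down avoids this inclusion problem. Follow R\'emond and take a component of the smallest locus first, then look upward: any component $T'$ of $Z_{\sigma_{j}}$ lies in some component of $Z_{\sigma_{j-1}}$, and a strictly increasing chain of closed irreducible sets has length at most $u$. Choosing $u+1$ levels therefore forces an equality, which gives a common component $T$ of $Z_{\sigma_{j-1}}(P)$ and $Z_{\sigma_j}(P)$ containing $W$. The components of the larger locus contain those of the smaller one, so $W\subseteq T$.

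Next choose the gaps. Take all $\tau_j=\tau$ equal, with $\sigma_0=\epsilon/(u+1)$ or similar, so that $\sigma_u\leq\epsilon$. The cleaner choice is to start the chain below $\epsilon$ and end at $\epsilon$, so that $W$ is a component of the top locus; this forces $\tau\approx\epsilon/u$. Then verify that
\[
    \phi(c)/\tau^c\leq B_\epsilon=(s/\epsilon)^u.
\]
This holds because $\phi(c)\leq s^c$ by the multinomial theorem, and the factor $u^c$ is absorbed using the smallness hypothesis \eqref{eq:productconditions}; the $\log(u+1)/(2u^2)$ constant suggests R\'emond's geometric choice of $\tau_j$, and I would adopt exactly his levels. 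With \eqref{eq:productconditions}, Lemma~\ref{lem:product-common-component}(2) applies. It gives $T=\prod T_i$ with $\prod\deg T_i\leq\phi(c)/\tau^c\leq B_\epsilon$. The height bound $\delta_ih(T_i)\prod_{j\neq i}\deg T_j\leq a_i B_\epsilon h(P)$ implies \eqref{eq:product}, because $a_i\leq n_i$ and the degrees are at least $1$. Set $Z_i=T_i$.

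Properness of the product holds because $T\subseteq Z_\epsilon(P)\neq\prod\mathbb P^{n_i}$ since $P\neq0$, so $c\geq1$ and some $T_i$ is proper. Geometric integrality comes from the enlargement of $F$ already made in Lemma~\ref{lem:product-common-component}. The main obstacle will be the bookkeeping of the levels: ensuring that the common component contains the given component $W$ of $Z_\epsilon$ rather than merely some component, and matching the constant so that $\phi(c)/\tau^c\leq B_\epsilon$ for every $c\leq u$. I would handle both by copying R\'emond's choice of levels verbatim and checking the numerical inequality directly.
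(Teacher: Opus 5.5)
There is a genuine gap in the step you call bookkeeping, and it is not only numerical. Your pigeonhole yields \emph{some} step $j$ at which $Z_{\sigma_{j-1}}(P)$ and $Z_{\sigma_j}(P)$ share a component, of \emph{some} codimension $c$, with no relation between $c$ and $j$. To apply Lemma~\ref{lem:product-common-component} you would then need $\phi(c)/\tau_j^c\le B_\epsilon$ for every pair $(j,c)$. In particular, every gap would have to satisfy $\tau_j\ge(\phi(u)/B_\epsilon)^{1/u}=\phi(u)^{1/u}\epsilon/s$. Then $u$ such gaps sum to more than $\epsilon$: for $s=2$ and $n=(1,1)$ one has $\phi(2)=2$ and $u\phi(u)^{1/u}/s=\sqrt2>1$. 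So no choice of levels, R\'emond's included, rescues your version of the pigeonhole. Your equal-gap estimate also fails outright at $c=u$. There $(s/\epsilon)^{u-c}=1$, so the factor $u^u$ cannot be absorbed by the smallness of $\epsilon$.

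The missing idea is to tie the codimension to the step. Take levels $0=\sigma_0<\cdots<\sigma_u\le\epsilon$ and a chain $W\subseteq W_u\subseteq\cdots\subseteq W_0$, with $W_j$ a component of $Z_{\sigma_j}(P)$. Let $j$ be the least index with $\operatorname{codim}W_j\le j$. It exists because $\operatorname{codim}W_u\le u$, and $j\ge1$ because $W_0\subseteq V(P)$ is proper. Minimality gives $j\le\operatorname{codim}W_{j-1}\le\operatorname{codim}W_j\le j$, so $W_{j-1}=W_j$ with codimension exactly $j$. You now need only the diagonal condition $\tau_j^j>\phi(j)/B_\epsilon$. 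Gaps $\tau_j>(\phi(j)/B_\epsilon)^{1/j}$ with $\sum_j\tau_j\le\epsilon$ exist precisely because $\sum_{j=1}^u\phi(j)^{1/j}B_\epsilon^{-1/j}<\epsilon$ under \eqref{eq:productconditions}; this is R\'emond's computation, which the paper cites. These gaps are very unequal: $\tau_u$ is at least the fraction $\phi(u)^{1/u}/s$ of $\epsilon$, while $\tau_j$ for small $j$ is of size about $(\epsilon/s)^{u/j}$.

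With this selection, the rest of your argument goes through as in the paper: Lemma~\ref{lem:product-common-component}(2), $a_i\le n_i$, and degrees at least $1$. One small correction: properness holds because $T\subseteq Z_{\sigma_j}(P)\subseteq V(P)$. You wrote $T\subseteq Z_\epsilon(P)$, but the inclusion goes the other way, $W\subseteq T$.
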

\end{samepage}

\begin{proof}
    The numerical calculation in \cite[\S2, pp.~292--293]{Rem01} gives, under \eqref{eq:productconditions},
    \[
        \sum_{j=1}^u\phi(j)^{1/j}B_\epsilon^{-1/j}<\epsilon.
    \]
    Choose $\tau_j>\phi(j)^{1/j}B_\epsilon^{-1/j}$ with $\sum_{j=1}^u\tau_j<\epsilon$ and put $\sigma_j=\sum_{r\leq j}\tau_r$, $\sigma_0=0$. Given a component $W$ of $Z_\epsilon(P)$, choose components $W_j$ of $Z_{\sigma_j}(P)$ such that $W\subset W_u\subset\cdots\subset W_0$. Here $W_0$ has codimension one. Choose the first $j$ such that $\operatorname{codim}W_j\leq j$; this exists because $\operatorname{codim}W_u\leq u$. By minimality of $j$ and the inclusion $W_j\subset W_{j-1}$,
    \[
        j\leq\operatorname{codim}W_{j-1}
        \leq\operatorname{codim}W_j\leq j.
    \]
    Thus $W_j=W_{j-1}$ and its codimension is exactly $c=j$. Then $\phi(c)/\tau_j^c<B_\epsilon$. Lemma~\ref{lem:product-common-component} applies to $W_{j-1}=W_j$, since the degree ratios exceed $B_\epsilon>1$. Writing this common component as $\prod_iT_i$, we obtain
    \[
        \prod_i\deg T_i\leq B_\epsilon,\qquad
        \delta_i h(T_i)\leq n_iB_\epsilon h(P).
    \]
    The product is proper because it is a component of an index locus of the nonzero polynomial $P$.
\end{proof}

For one factor $\mathbb P^n$, the same choice of a common component applies to a polynomial of degree $\delta$ with thresholds $0,\beta/n,\ldots,\beta$, where $n\geq1$ and $0<\beta<1$. A component of $Z_\beta(P)$ is contained in a common component $T$ of two successive loci, of some codimension $1\leq c\leq n$. In the local ring at the generic point of $T$, the ideal generated by the derivatives of weighted order at most the lower of the two successive thresholds has Samuel multiplicity at least $(\beta\delta/n)^c$, by \eqref{eq:product-samuel}. Lemma~\ref{lem:product-intersections} therefore gives
\[
    \deg T\leq(n/\beta)^c\leq(n/\beta)^n,\qquad
    \delta h(T)\leq c(n/\beta)^ch(P)
    \leq n(n/\beta)^nh(P).
\]
We use this form after omitting zero-dimensional factors.

\subsubsection{From a large index to a hypersurface}

We return to Proposition~\ref{prop:geometric-comparison}, retaining $Y$, the finite projections $\rho_i$, and $D,u,H_B$ from Sections~\ref{sec:aux-limit}--\ref{sec:local-differentiation}. To apply the product estimate to an auxiliary section, we take a norm to the product of projective spaces. Fix a positive integer $d$ with $d\varepsilon\in\mathbb Z$, a nonzero section $s_d$ of $\mathcal Q_d$, and its representing family $F_d$. Let $\sigma_d$ be its weighted index. Assume that the discriminants and remaining original coordinates are nonzero at $x$, and put $\rho=\prod_i\rho_i$.

\begin{samepage}
    \begin{lemma}\label{lem:auxiliary-norm}
        Let $s_d\ne0$ be represented by $F_d$. Choose a component of $F_d$ whose corresponding generating section $s_\lambda$ is nonzero at $x$, and take its norm under $\rho$. This gives a nonzero polynomial $G_d$ on $\prod_i\mathbb P^{u_i}$ with the following properties.
        \begin{enumerate}[beginpenalty=10000]
            \item Its multidegree is $dt_1Da$, and its index at $\rho(x)$ for these degrees is at least $\sigma_d/D$.
            \item Its coefficient height satisfies
            \begin{equation}\label{eq:aux-norm-height}
                h(G_d)\leq D\left(h(F_d)+dt_1H_B\right).
            \end{equation}
        \end{enumerate}
    \end{lemma}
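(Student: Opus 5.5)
We realise the norm concretely through the monic coordinate equations of Lemma~\ref{lem:comparison-projections}. Fix the chosen component $F=F_{d,\lambda}$, whose generating section $s_\lambda$ does not vanish at $x$. Via the first injection, $F$ is a multihomogeneous polynomial in the original coordinates $W_{ij}$ of multidegree $dt_1a$. Reducing modulo the monic equations $Q_{ij}$, as in the proof of Lemma~\ref{lem:auxiliary-system}, gives a representative that is polynomial in the projection coordinates. We define $G_d$ as the norm of $F$ for the finite extension of multihomogeneous coordinate rings given by $\rho=\prod_i\rho_i$. Concretely, $G_d$ is the product of the conjugates of $F$ over the $D=\prod_iD_i$ points of a general fiber. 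It is the determinant of multiplication by $F$ on the free module spanned by the monomials $W^k$ with $k_{ij}<D_i$ (after the discriminant localisation), or equivalently the resultant of $F$ with the $Q_{ij}$. Since $F$ does not vanish identically on the integral variety $Y$ (because $s_d\neq0$ and $s_\lambda(x)\neq0$), its norm is nonzero. Homogeneity of multidegree $dt_1a$ in each fiber conjugate gives multidegree $dt_1Da$.

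For the index bound in (1), we work locally at $x$, where $\rho$ is \'etale. The conjugates of $F$ are indexed by the $D$ branches over $\rho(x)$, and one of them is $F$ itself on the branch through $x$, expressed in the local parameters $\mathbf v_i-\mathbf v_i(x_i)$. On that branch $F$ equals $s_d/s_\lambda$ times a unit, so its weighted index at $x$ is $\sigma_d$, measured with weights $(dt_1a_i)^{-1}$. The other conjugates are analytic at $\rho(x)$, so each has weighted index $\ge0$. The weighted index is additive under products, since the lowest weighted-degree homogeneous parts multiply to something nonzero. Hence $G_d$ has weighted index $\ge\sigma_d$ for the weights $(dt_1a_i)^{-1}$. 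Rescaling to the multidegrees $dt_1Da_i$ divides the index by $D$, which gives the claimed bound $\sigma_d/D$. The main subtlety here is that the branches, though algebraic, are honest power series only formally. The argument therefore uses completions at $\rho(x)$, where \'etaleness makes them power series in the local coordinates.

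For the height bound (2), we estimate place by place using the determinant description. First reduce $F$ with the $Q_{ij}$. By the ultrametric estimate in the proof of Lemma~\ref{lem:auxiliary-system}, the coefficients of the reduced form, as polynomials in the projection coordinates, have $v$-norm at most $|F|_v\prod_i|B_i|_v^{dt_1a_i}$. The same bound holds for the entries of the multiplication matrix on the monomial basis, since multiplying by a basis monomial and reducing raises the degree by less than $D_i$ in factor $i$. That extra contribution is a bounded power of $|B_i|_v$ independent of $d$; we either absorb it or avoid it by using the resultant formulation. The determinant is a sum of products of $D$ entries. The ultrametric inequality together with the Gauss lemma then gives
\[
    |G_d|_v\le|F|_v^D\prod_i|B_i|_v^{dt_1Da_i}.
\]
Summing over $v$ with the normalisation $[F:K]^{-1}$ gives \eqref{eq:aux-norm-height}. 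The main obstacle I expect is bookkeeping in this last step. The problem is the exponent of $|B_i|_v$: we need exactly $dt_1a_iD$ with no lower-order term. Otherwise an $O(1)$ error appears, and it is harmless only after dividing by $d$. If a clean bound fails, I would carry an additive $O_Y(1)$ term, which disappears in the limit $d\to\infty$ just as in Lemma~\ref{lem:auxiliary-small-section}.
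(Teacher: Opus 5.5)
Your multidegree argument and your index argument in (1) are essentially sound. One point needs repair. \'Etaleness at $x$ alone does not make the conjugates on the other branches power series at $\rho(x)$. You need the nonvanishing of the primitive-element discriminant at $\rho_i(x_i)$, which makes the whole fibre \'etale. Alternatively, use the paper's shortcut: $N(\alpha)/\alpha$ is a product of conjugates integral over the base local ring and lies in the function field of $Y$, so it is regular at $x$ because $\mathcal O_{Y,x}$ is normal.

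The genuine gap is in the height bound (2), because the determinant and resultant descriptions you rely on are wrong in general. When $Y_i$ has more than one non-projection coordinate, the monomials $W^k$ with $k_{ij}<D_i$ are not a basis of the function field of $Y_i$ over that of $\mathbb P^{u_i}$. There are many more than $D_i$ of them and they are linearly dependent, so there is no multiplication matrix on a monomial basis. Your alternative fails too. Multiplication on the algebra generated over the base by the $W_{ij}$ subject only to $Q_{ij}=0$ (equivalently, an iterated resultant with the $Q_{ij}$) gives the product of the component over all tuples of roots of the separate equations. That product has the wrong degree and is not the norm. Even where a genuine basis exists, re-reducing after multiplication by a basis element costs an extra $d$-independent power of $|B_i|_v$, as you noticed. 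So your route cannot give the exponent $dt_1Da_i$ exactly. The fallback with an additive $O_Y(1)$ proves a weaker inequality than \eqref{eq:aux-norm-height}, not the lemma.

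The missing idea is to bound conjugates rather than matrix entries.
\begin{itemize}
\item Extend the Gauss norm in the affine projection variables to a splitting field.
\item Each conjugate of $W_{ij}/V_{i0}$ is a root of the monic equation $Q_{ij}=0$. Its coefficients have Gauss norm at most $|B_i|_v$, and $|B_i|_v\geq1$. The non-archimedean root bound therefore gives absolute value at most $|B_i|_v$.
\item The chosen component has degree $dt_1a_i$ in the $i$th factor and coefficient norm at most $|F_d|_v$. Hence each of its $D$ conjugates has norm at most $|F_d|_v\prod_i|B_i|_v^{dt_1a_i}$.
\item Multiplicativity of the extended absolute value then gives
\[
    |G_d|_v\leq|F_d|_v^{D}\prod_i|B_i|_v^{dt_1Da_i}
\]
with no lower-order term.
\end{itemize}
Summing over the places gives \eqref{eq:aux-norm-height} exactly. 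This is the paper's argument; it needs neither a reduction modulo the $Q_{ij}$ nor a basis of the function field.
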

\end{samepage}

\begin{proof}
    Substitute the monomial expressions for the projective coordinates of $\mathcal P$ into the chosen component. It is a nonzero section of multidegree $dt_1a$ on $Y$. Its norm is regular because the projection is finite and the section is integral on every affine projection chart; the pole bound at infinity gives multidegree $dt_1Da$.

    Extend the Gauss norm in the affine projection variables to a splitting field. The monic equations and $|B_i|_v\geq1$ bound every conjugate of $W_{ij}/V_{i0}$ by $|B_i|_v$. Each conjugate of the chosen component therefore has norm at most $|F_d|_v\prod_i|B_i|_v^{dt_1a_i}$. Taking the product of the $D$ conjugates and summing over the places gives \eqref{eq:aux-norm-height}.

    Let $\alpha$ be the chosen component in a local trivialization. Because $s_\lambda(x)\ne0$, its index for the weights $dt_1a$ equals that of $s_d$. The quotient $N(\alpha)/\alpha$ is a product of integral conjugates and is integral over the base local ring. It lies in the function field of $Y$, and the local ring $\mathcal O_{Y,x}$ is normal because $\rho$ is \'etale at $x$. The quotient is therefore regular at $x$. Its weighted order is nonnegative. The \'etale projection identifies completed local rings, while the weights for $G_d$ are $D$ times the original weights. This gives the index bound.
\end{proof}

The degree of the norm contains a factor $d$, so the product estimate replaces $h(F_d)$ by $h(F_d)/d$. For the auxiliary sections already constructed, this quotient is bounded as $d$ tends to infinity. Put $n=m\dim X$, and let $s$ be the number of positive $u_i$. We omit the zero-dimensional factors when applying Lemma~\ref{lem:geometric-product}.

\begin{samepage}
    \begin{lemma}\label{lem:auxiliary-cut}
        Suppose $0<\beta<1$, $\sigma_d/D>\beta$, and $a_i/a_{i+1}>(m/\beta)^u$ for $1\leq i<m$. If $s\geq2$, suppose also that
        \[
            \beta<s\left(\frac{\log(u+1)}{2u^2}\right)^u.
        \]
        Set $P=((m+n)/\beta)^n$. Then there is an index $i$ with $u_i>0$ and a nonzero homogeneous polynomial $U$ on $\mathbb P^{u_i}$ vanishing at $\rho_i(x_i)$ such that
        \begin{equation}\label{eq:aux-cut-bounds}
            \deg U\leq P,\qquad
            a_i h(U)\leq\frac{u_iP}{t_1}\left(\frac{h(F_d)}{d}+t_1H_B\right).
        \end{equation}
        Its pullback to $Y_i$ defines a proper hypersurface section through $x_i$.
    \end{lemma}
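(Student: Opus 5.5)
The plan is to apply the product estimate to the norm polynomial $G_d$ from Lemma~\ref{lem:auxiliary-norm}, and then read off a hypersurface in one factor. Omit the zero-dimensional factors, leaving $s$ factors $\mathbb P^{u_i}$ with $u_i>0$ and $\sum u_i=u\leq n$. By Lemma~\ref{lem:auxiliary-norm}, $G_d$ has multidegree $\delta_i=dt_1Da_i$ and index at $\rho(x)$ at least $\sigma_d/D>\beta$. Hence $\rho(x)\in Z_\beta(G_d)$. The successive ratios $\delta_i/\delta_{i+1}=a_i/a_{i+1}$ among the retained factors are at least the corresponding original ratios. After dropping factors, each such ratio is a product of consecutive original ratios, so it exceeds $(m/\beta)^u\geq(s/\beta)^u=B_\beta$.

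\textbf{Case $s\geq2$.} The extra hypothesis on $\beta$ is exactly \eqref{eq:productconditions}, so Lemma~\ref{lem:geometric-product} applies with $\epsilon=\beta$. The component of $Z_\beta(G_d)$ through $\rho(x)$ then lies in a proper product $\prod_iZ_i$, and some factor $Z_i\subsetneq\mathbb P^{u_i}$ contains $\rho_i(x_i)$. This factor satisfies $\deg Z_i\leq B_\beta$ and
\[
    dt_1Da_ih(Z_i)\leq u_iB_\beta h(G_d)\leq u_iB_\beta D\bigl(h(F_d)+dt_1H_B\bigr).
\]
Dividing by $dt_1D$ gives $a_ih(Z_i)\leq (u_iB_\beta/t_1)(h(F_d)/d+t_1H_B)$.

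\textbf{Case $s=1$.} Use the one-factor version stated after Lemma~\ref{lem:geometric-product}, with thresholds $0,\beta/u,\ldots,\beta$. It gives $\deg T\leq(u/\beta)^u$ and $\delta h(T)\leq u(u/\beta)^uh(G_d)$, which leads to the same bound with $B_\beta$ replaced by $(u/\beta)^u$. In both cases the degree bound is at most $(m/\beta)^u\leq((m+n)/\beta)^n$, since $u\leq n$ and $(m+n)/\beta>1$.

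The remaining task, which I expect to be the main obstacle, is to pass from the proper subvariety $Z_i$ to a single hypersurface $U$ through $\rho_i(x_i)$ with $\deg U\leq P$ and $h(U)\lesssim h(Z_i)$ (up to the factor built into $P$). I would first use a Chow-form construction, in the style of Lemma~\ref{lem:comparison-projections}. Project $Z_i$ generically, with constant coefficients, to a hypersurface in a $(\dim Z_i+1)$-dimensional linear quotient, and pull back its equation. The result is a hypersurface of degree $\deg Z_i$ containing $Z_i$. Its coefficients are constant linear combinations of Chow coefficients, so its height is at most $h(Z_i)$ by the ultrametric inequality and the product formula. The projection center can be chosen constant with the hypersurface not vanishing identically on $\rho_i(Y_i)=\mathbb P^{u_i}$, which is automatic since $U$ is nonzero on $\mathbb P^{u_i}$.

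The slack between $((m+n)/\beta)^n$ and $(m/\beta)^u$ is what absorbs any degree or height loss if the Chow-form comparison carries extra combinatorial constants. Finally, the pullback $U\circ\rho_i$ is nonzero on $Y_i$ because $\rho_i$ is finite and surjective, and it vanishes at $x_i$. It therefore defines a proper hypersurface section through $x_i$.
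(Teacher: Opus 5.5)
Your proposal is correct and follows the paper's proof: apply Lemma~\ref{lem:geometric-product} (or the one-factor estimate after it) to the norm $G_d$, then replace the proper factor $Z_i$ by a constant specialization of its Chow form, which gives $\deg U\le\deg Z_i$ and $h(U)\le h(Z_i)$ with no loss, so no slack is needed for that step. One small slip: in the one-factor case $(u/\beta)^u\le(m/\beta)^u$ fails when $u>m$, but $(u/\beta)^u\le((m+n)/\beta)^n$ holds directly because $u\le n$, which is why $P$ uses $m+n$.
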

\end{samepage}

\begin{proof}
    Apply Lemma~\ref{lem:geometric-product} to $G_d$ after omitting the zero-dimensional factors. The ratios between the remaining successive weights still exceed $(m/\beta)^u$. If only one factor remains, use the one-factor estimate following that lemma. In both cases the bound $P$ covers the required degree bound, including $(u/\beta)^u$ in the one-factor case. We obtain a proper factor $Z_i\subsetneq\mathbb P^{u_i}$ through $\rho_i(x_i)$ with degree at most $P$ and
    \[
        dt_1Da_i h(Z_i)\leq u_iP h(G_d).
    \]
    Choose a constant linear projection of $Z_i$ to a hypersurface in $\mathbb P^{\dim Z_i+1}$. Its equation is a constant specialization of the Chow form of $Z_i$. Pulling it back gives a nonzero homogeneous polynomial $U$ vanishing on $Z_i$, with $\deg U\leq\deg Z_i$ and $h(U)\leq h(Z_i)$. Combine these bounds with Lemma~\ref{lem:auxiliary-norm} to obtain \eqref{eq:aux-cut-bounds}. Since $\rho_i$ is finite and surjective, the nonzero equation $U$ does not vanish identically on $Y_i$.
\end{proof}

\subsection{Uniform constants and completion of the comparison}\label{sec:comparison-completion}

We prove Proposition~\ref{prop:geometric-comparison} by choosing a product through $x$ of least dimension among those satisfying prescribed degree and height bounds. At dimension $u$, denote these bounds by $D_u$ and $B_u|a|$. The recursion below allows every proper section supplied by the preceding lemmas. Start with
\[
    n=m\dim X,\qquad D_n=(\deg X)^m,\qquad B_n=\max\{1,h(X),\delta\}.
\]
At dimension $u$, use $e_u$ as the value of $\varepsilon$ in $\mathcal Q_d$, $A_u|a|$ as a bound for $h(F_d)/d$, and $s_u$ as the threshold for the weighted index. Define these and the constants for hypersurface sections recursively, for $u=n,n-1,\ldots,1$, by
\begin{align*}
    e_u&=\frac{1}{4u\theta(t_1m)^uD_u},&
    R_u&=8\theta r_0(N+1)^m(t_1m)^uD_u,\\
    A_u&=R_u\bigl(\delta+(t_1+t_2+1)B_u\bigr)+1,&
    s_u&=\frac{e_u}{8t_1(N+1)D_u},\\
    \beta_u&=\min\{s_u/(2D_u),(4n^2)^{-n}\},&
    P_u&=\left\lceil((m+n)/\beta_u)^n\right\rceil,\\
    L_u&=\max\left\{\frac{nP_u}{t_1}(A_u+t_1B_u)+1,
    2D_uB_u,1\right\},&
    T_u&=\max\{P_u,2D_u^2\},\\
    D_{u-1}&=D_uT_u,&
    B_{u-1}&=(T_u+1)B_u+D_uL_u.
\end{align*}
Lemma~\ref{lem:auxiliary-small-section} gives $h(F_d)/d\leq A_u|a|$ for large $d$ when the exponent is $e_u$. For index at most $s_u$, Lemma~\ref{lem:height-index} gives a height bound. For larger index, $\beta_u$ permits Lemma~\ref{lem:auxiliary-cut}, and $T_u,L_u|a|$ bound the degree and weighted height of its equation. The same bounds include the discriminants of Lemma~\ref{lem:comparison-projections}. Choose
\[
    c_2>\max_{1\leq u\leq n}P_u,\qquad
    c_3>\max\left\{B_0,\max_{1\leq u\leq n}\frac{4(A_u+e_uB_u/2)}{e_u}\right\},
    \qquad c_1=\max_{1\leq u\leq n}\frac{4}{e_u}.
\]
These constants depend only on the data listed in Proposition~\ref{prop:geometric-comparison}.

The recursion for $D_{u-1}$ and $B_{u-1}$ preserves the degree and height bounds when one factor is replaced by an irreducible component of a proper hypersurface section.

\begin{samepage}
    \begin{lemma}\label{lem:comparison-descent}
        Suppose a product $Y=\prod_iY_i$ through $x$ of dimension $u>0$ satisfies
        \[
            \prod_i\deg Y_i\leq D_u,\qquad \sum_i a_i h(Y_i)\leq B_u|a|.
        \]
        Suppose an equation $U$ on the original coordinates of one factor, or on its projection coordinates, has $\deg U\leq T_u$ and $a_i h(U)\leq L_u|a|$, vanishes at $x_i$, and does not vanish identically on $Y_i$. Replacing $Y_i$ by an irreducible component through $x_i$ of the corresponding hypersurface section gives a product $Y'$ of dimension $u-1$ satisfying
        \[
            \prod_j\deg Y'_j\leq D_{u-1},\qquad \sum_j a_jh(Y'_j)<B_{u-1}|a|.
        \]
    \end{lemma}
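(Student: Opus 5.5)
The plan is to reduce the lemma to the geometric B\'ezout estimate \eqref{eq:bezout} applied to the single factor $Y_i$, and then to add up the unchanged contributions of the other factors. Let $Y'_i$ be an irreducible component through $x_i$ of $Y_i\cap V(U)$. Since $U$ does not vanish identically on $Y_i$, $Y'_i$ has dimension $\dim Y_i-1$, so $Y'=\prod_{j\neq i}Y_j\times Y'_i$ has dimension $u-1$. The other factors are unchanged. If $U$ is given in projection coordinates, I will first pull it back through the constant linear forms $V_{i\ell}$. This substitutes constant linear forms into the polynomial, so the degree is preserved and, by the ultrametric inequality, the coefficient height does not increase. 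We may therefore assume that $U$ is a polynomial in the original coordinates of $\mathbb P^N$, with $\deg U\leq T_u$ and $h(U)\leq L_u|a|/a_i$.

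\emph{Degree.} For a single projective factor, B\'ezout together with effectivity of the intersection cycle gives $\deg Y'_i\leq\deg(Y_i\cdot V(U))\leq\deg U\cdot\deg Y_i\leq T_u\deg Y_i$. Multiplying by the unchanged degrees of the other factors gives $\prod_j\deg Y'_j\leq T_uD_u=D_{u-1}$.

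\emph{Height.} I will apply \eqref{eq:bezout} with $s=1$ to $Y_i\subset\mathbb P^N$. In single-factor Chow normalization this reads
\[
    h(Y_i\cdot V(U))\leq\deg U\,h(Y_i)+h(U)\deg Y_i.
\]
This uses that the Chow height is a fixed combination of the $h_\alpha$; for one factor it is $h_\alpha$ itself, up to a harmless normalization. Effectivity and nonnegativity of the heights of the components, which holds for constant-model geometric heights because the $h_\alpha$ are intersection numbers of nef classes with effective cycles, let me bound the height of the single component by this sum: $h(Y'_i)\leq T_uh(Y_i)+\deg Y_i\,h(U)$. Multiplying by $a_i$ and using $\deg Y_i\leq D_u$ (all degrees are at least $1$) together with $a_ih(U)\leq L_u|a|$, I get
\[
    a_ih(Y'_i)\leq T_ua_ih(Y_i)+D_uL_u|a|.
\]
Adding the unchanged terms $a_jh(Y_j)$ for $j\neq i$ gives
\[
    \sum_ja_jh(Y'_j)\leq(T_u+1)\sum_ja_jh(Y_j)+D_uL_u|a|\leq\bigl((T_u+1)B_u+D_uL_u\bigr)|a|=B_{u-1}|a|.
\]

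\emph{Expected obstacle.} The statement asks for strict inequality. I plan to get it from one strictly slack step: either $T_ua_ih(Y_i)<(T_u+1)a_ih(Y_i)$ when $h(Y_i)>0$, or, when heights vanish, from $|a|>0$ together with $B_u\geq1$, so that $(T_u+1)B_u|a|$ strictly exceeds $T_u\sum_j a_jh(Y_j)+\sum_{j\neq i}a_jh(Y_j)$. The main subtlety I expect is justifying nonnegativity of the heights of the discarded components, so that a single component's height is bounded by the height of the whole cycle. This holds because $\mathcal O(1)$ on the constant model $C_F\times\mathbb P^N$ is nef and the closures of the components are effective, so each contributes a nonnegative intersection number.
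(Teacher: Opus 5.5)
Your proposal is correct and follows the paper's proof: you pull $U$ back through the constant projection forms, use B\'ezout for the degree and \eqref{eq:bezout} for the height of the one modified factor, and add back the unchanged factors. The only blemish is the strictness step, where no case split is needed (and the expression in your second case is not the quantity you must bound): since $T_u\geq1$, one has $T_ua_ih(Y_i)+\sum_{j\neq i}a_jh(Y_j)\leq T_uB_u|a|$, so the total is at most $(T_uB_u+D_uL_u)|a|<B_{u-1}|a|$ because $B_u|a|>0$, which is exactly how the paper concludes.
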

\end{samepage}

\begin{proof}
    Substitution of constant projection forms does not increase the coefficient height or degree of $U$. B\'ezout's degree inequality and \eqref{eq:bezout} give
    \[
        \prod_j\deg Y'_j\leq D_uT_u=D_{u-1},\qquad
        \sum_j a_jh(Y'_j)\leq T_uB_u|a|+D_uL_u|a|<B_{u-1}|a|.
    \]
    The section is proper, so each of its irreducible components has dimension $u_i-1$. Finite extension to define the chosen component preserves the normalized heights.
\end{proof}

\begin{proof}[Proof of Proposition~\ref{prop:geometric-comparison}]
    Choose a product $Y=\prod_iY_i$ through $x$ of least dimension subject to
    \[
        \prod_i\deg Y_i\leq D_u,\qquad
        \sum_i a_i h(Y_i)\leq B_u|a|,\qquad u=\sum_i\dim Y_i.
    \]
    Such a product exists because $X^m$ satisfies the bounds. Its dimension is positive: if $u=0$, then $Y_i=\{x_i\}$ for every $i$, and $c_3|a|\leq\sum_i a_i h(x_i)\leq B_0|a|$, contrary to $c_3>B_0$.

    Choose the projections of Lemma~\ref{lem:comparison-projections}. If an original coordinate vanishes at $x_i$ but not identically on $Y_i$, its equation has degree $1$ and height $0$. If a discriminant vanishes at $\rho_i(x_i)$, its equation has degree at most $T_u$ and weighted height at most $L_u|a|$, by \eqref{eq:aux-discriminant}. Each equation cuts $Y_i$ properly, the latter because $\rho_i$ is finite and surjective. Lemma~\ref{lem:comparison-descent} contradicts the choice of $Y$ in either case. Thus the projections are \'etale at $x$ and all remaining coordinates and derivative denominators are nonzero there.

    Fix a finite extension $F/K$ defining the point, product, projections, and auxiliary data, as in Section~\ref{sec:aux-limit}. Apply Lemma~\ref{lem:auxiliary-small-section} with $\varepsilon=e_u$ and $D_*=D_u$. For every sufficiently large positive integer $d$ with $de_u\in\mathbb Z$, its section satisfies $h(F_d)/d\leq A_u|a|$. If $\sigma_d>s_u$, then
    \[
        \sigma_d/D>s_u/D_u\geq2\beta_u,\qquad
        a_i/a_{i+1}\geq c_2>P_u.
    \]
    Since $P_u\geq(m/\beta_u)^u$, these inequalities give the weight condition in Lemma~\ref{lem:auxiliary-cut}. The choice $\beta_u\leq(4n^2)^{-n}$ also gives its smallness condition: for $2\leq s\leq u\leq n$, we have $(4n^2)^{-n}<s(2u^2)^{-u}<s(\log(u+1)/(2u^2))^u$. This also applies after zero-dimensional factors are omitted. That lemma gives an equation of degree at most $T_u$ and weighted height at most $L_u|a|$. Lemma~\ref{lem:comparison-descent} again contradicts minimality.

    Hence $\sigma_d\leq s_u$ for every sufficiently large positive integer $d$ with $de_u\in\mathbb Z$. Apply Lemma~\ref{lem:height-index} and let $d$ tend to infinity with $F$ and $a$ fixed. Since $H_B\leq B_u|a|$, we obtain
    \[
        \frac{e_u}{2}H_x\leq h_{\mathcal M}(x)+(A_u+e_uB_u/2)|a|.
    \]
    The inequality $H_x\geq c_3|a|$ and the choice of $c_3$ give $(e_u/4)H_x\leq h_{\mathcal M}(x)$, proving \eqref{eq:aux-generalized-conclusion}. Only the lower bound on $d$ depends on the field and the weights; the constants $c_1,c_2,c_3$ do not.
\end{proof}

\subsection{The semi-abelian difference maps}\label{sec:mixed-completion}

We apply Proposition~\ref{prop:geometric-comparison} to differences on $G$ and its abelian quotient. Since the boundary height has degree one and the abelian height has degree two under multiplication, we use weights $a_i^2$ for the former and $a_i$ for the latter. Both contributions then have degree two in the weights.

Let $t$ be the torus rank of $G$. Choose $n\geq1$ so that $N_0=N^{\otimes n}$ gives the projectively normal embedding of \cite[\S2]{Rem03}, and put $c=(t+1)n$. Set $L^*=M+c\overline\pi^*N=M+(t+1)\overline\pi^*N_0$. We first prove Theorem~\ref{thm:vojta} for its canonical height
\[
    \hh_{L^*}=\hh_M+c(\hh_N\circ\pi).
\]
In Theorem~\ref{thm:vojta}\textup{(2)}, we accordingly use $\hh_M$ and $c\hh_N$ for its two components.

Let $a=(a_1,\ldots,a_m)$ be a tuple of positive integers. Each endpoint occurs in one adjacent difference and each interior point in two. We record these multiplicities by $\eta_i$, and define the exponents $b_i$ used in the line bundles below and their sum $H_a$ by
\[
    \eta_1=\eta_m=1,\qquad \eta_i=2\quad(1<i<m),\qquad
    b_i=\eta_i a_i^2,\qquad H_a=|b|=\sum_i b_i.
\]

For $x=(x_1,\ldots,x_m)\in(V\setminus Z_V)(\overline K)^m$, write
\[
    D_a(x)=\sum_{i<m}\bigl(\hh_M(a_i^2x_i-a_{i+1}^2x_{i+1})
    +c\hh_N(\pi(a_ix_i-a_{i+1}x_{i+1}))\bigr).
\]

We first prove the following estimate for positive integer weights and then use it to deduce Theorem~\ref{thm:vojta}. We prove it on a projective graph, where the weighted differences define line bundles. Compare \cite[Lemma~4.2 and proof of Theorem~4.1]{Rem03}.

\begin{samepage}
    \begin{proposition}\label{prop:weighted-comparison}
        There are positive constants $\kappa,h_0,C_4,C_5$, depending only on $G,V$, the chosen embeddings and heights, such that
        \begin{equation}\label{eq:weightedlower}
            \sum_i\eta_i a_i^2\hh_{L^*}(x_i)\leq C_4D_a(x)+C_5H_a
        \end{equation}
        for all $x_i\in(V\setminus Z_V)(\overline K)$ and positive integers $a_i$ satisfying
        \begin{enumerate}[beginpenalty=10000]
            \item $\hh_{L^*}(x_i)\geq h_0$ for every $i$;
            \item $a_i/a_{i+1}\geq\kappa$ for $1\leq i<m$.
        \end{enumerate}
        The constants are independent of the weights and the fields of definition of the points.
    \end{proposition}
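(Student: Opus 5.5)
The plan is to reduce \eqref{eq:weightedlower} to Proposition~\ref{prop:geometric-comparison}, following \cite[Lemma~4.2 and proof of Theorem~4.1]{Rem03}. As in the rest of this section, I work first over $K=k(C)$. I embed $\overline G$ by the projectively normal embedding attached to $L^*$ (built from $M$ and $N_0$ as in \cite[\S2]{Rem03}), take $X\subset\mathbb P^N$ to be the closure of $V$, and apply Proposition~\ref{prop:geometric-comparison} to $X^m$. The weights fed into that proposition will be $b=(b_1,\ldots,b_m)$ with $b_i=\eta_ia_i^2$. Condition~(2) of the present statement then gives $b_i/b_{i+1}\geq\kappa^2/2$, so $\kappa$ will be chosen with $\kappa^2/2\geq c_2$.

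\textbf{Step 1: the auxiliary line bundle.} I consider the weighted difference map
\[
    \phi_a:G^m\to(G\times A)^{m-1},\qquad x\mapsto\bigl(a_i^2x_i-a_{i+1}^2x_{i+1},\ \pi(a_ix_i-a_{i+1}x_{i+1})\bigr)_{i<m}.
\]
On the $G$-factors I use the boundary bundle $M$, and on the $A$-factors I use $cN$. Let $\pi:\mathcal X\to X^m$ be the closure of the graph of $\phi_a|_{V^m}$, and let $\mathcal M$ be the pullback of this target bundle; it is nef. The very ample bundle $\mathcal P$, the injections $\mathcal P\to\mathcal N_b^{t_1}$ and $\mathcal P\otimes\mathcal M^{-1}\to\mathcal N_b^{t_2}$, and the generating family $\Sigma$ all come from explicit addition and multiplication formulas for the compactified group law.

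\textbf{Step 2: degree and height control.} Using $[n]^*M\simeq nM$ on the standard compactification and $[n]^*N\simeq n^2N$, the formulas for $[a_i^2]$ on the boundary part and $[a_i]$ on the abelian part have multidegree $O(b_i)$. Their coefficient heights are bounded by $\delta|b|$, with $t_1,t_2,r_0,\delta$ independent of $a$. This is hypothesis~(2) of Proposition~\ref{prop:geometric-comparison}, and it is the geometric analogue of R\'emond's explicit formulas.

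\textbf{Step 3: the intersection lower bound.} Next I verify hypothesis~(3): for every product $Y=\prod_iY_i$ through a point of $(V\setminus Z_V)^m$,
\[
    (\mathcal M^{\dim Y}\cdot\widetilde Y)\geq\theta^{-1}\prod_ib_i^{\dim Y_i}.
\]
I would expand $\mathcal M^{\dim Y}\cdot\widetilde Y$ as a polynomial in the $a_i$. Because $b_1\gg b_2\gg\cdots$, the leading term comes from the successive differences. That term is positive exactly when no $Y_i$ lies in a positive-dimensional coset, which holds because $Y_i$ passes through a point outside $Z_V$. The lower-order terms are absorbed once $\kappa$ is large. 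I expect this step to be the main obstacle, since $\theta$ must be uniform over all such products and all weights. I would first try R\'emond's argument via generic finiteness of the restricted difference map, using \cite[Theorems~1 and~2]{Abr94} for $Z_V$ and Lemma~\ref{lem:incidence}. Since the degrees of the $Y_i$ are not bounded a priori, I would try to combine this with a compactness argument over the Chow varieties of $V$.

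\textbf{Step 4: passing to canonical heights.} Proposition~\ref{prop:geometric-comparison} then gives $\sum_ib_ih(x_i)\leq c_1h_{\mathcal M}(x)$ whenever $h(x_i)\geq c_3$. It remains to compare the projective quantities with canonical ones:
\begin{itemize}
    \item $h(x_i)=\hh_{L^*}(x_i)+O(1)$, by Tate's limiting process;
    \item $h_{\mathcal M}(x)=D_a(x)+O(H_a)$, since the Weil heights on the target differ from canonical heights by bounded functions and scaling by $a_i^2$ or $a_i$ introduces errors of order $b_i$.
\end{itemize}
Choosing $h_0$ larger than $c_3$ plus the first error makes the hypothesis $h(x_i)\geq c_3$ hold. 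Collecting the error terms gives \eqref{eq:weightedlower} with $C_4=c_1$ and a suitable $C_5$. All constants depend only on the fixed geometric data, not on the weights or on the field containing the points.
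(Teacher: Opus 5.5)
Your reduction is the paper's: the same graph of the weighted difference map, the same weights $b_i=\eta_ia_i^2$ with $\kappa^2/2\geq c_2$, and the same three inputs (coefficient bound, intersection bound, height comparison). The genuine gap is Step~3, which you leave as a plan, and the plan would not deliver hypothesis~(3) of Proposition~\ref{prop:geometric-comparison}.

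That hypothesis must hold for \emph{every} product $Y=\prod_iY_i$ through $x$, of arbitrary degree, with one $\theta$ independent of $Y$ and of $a$. Absorbing ``lower-order terms'' by enlarging $\kappa$ makes $\kappa$ depend on the other intersection numbers of $Y$, hence on the degrees of the $Y_i$. A compactness argument over Chow varieties is unavailable because those degrees are unbounded. Worse, the graph compactification, and with it $\widetilde Y_a$, changes with $a$. So it is not even clear a priori that $(\mathcal M_a^u\cdot\widetilde Y_a)$ is a polynomial in the $a_i$, nor which monomial dominates when $a_1\gg a_2\gg\cdots$.

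The missing idea is Vojta's homogeneity theorem \cite[Theorem~5.5]{Voj96}. It says there are no other terms at all:
\[
    (\mathcal M_a^u\cdot\widetilde Y_a)=\Bigl(\prod_ia_i^{2u_i}\Bigr)(\mathcal M_{\boldsymbol1}^u\cdot\widetilde Y_{\boldsymbol1}).
\]
At $a=\boldsymbol1$, the bundle $\mathcal M_{\boldsymbol1}$ is the pullback of an ample bundle by the extension of $(y_i)\mapsto(y_i-y_{i+1})$. This map is generically finite on $\prod_i(Y_i\cap G)$ by \cite[Lemma~2.1]{Rem00}, as applied in \cite[p.~206]{Rem03}, because $m=\dim V+1$ and no $Y_i\cap G$ is contained in $Z_V$. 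The projection formula then makes the intersection number a positive integer, hence at least $1$. This gives $\theta=2^{m^2}$ uniformly, with no condition on $\kappa$ at all.

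Your positivity criterion is also misstated. What matters is $Y_i\cap G\not\subset Z_V$, not that $Y_i$ lies in no positive-dimensional coset. A subvariety through a point outside $Z_V$ can perfectly well lie in a coset that is not contained in $V$, so your ``which holds because'' fails.

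Lemma~\ref{lem:incidence} is the wrong tool here. It concerns $m$ copies of a single subvariety with finite stabilizer and $m>\dim G$, whereas here the factors differ and $m=\dim V+1$.

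Less seriously, Steps~2 and~4 are asserted rather than proved. A coefficient bound linear in $|b|$ needs a halving recursion as in Lemma~\ref{lem:multiplication-polynomials}; iterating addition naively gives heights exponential in $a_i$. The comparison $h_{\mathcal M_a}(x)\leq D_a(x)+O(H_a)$, uniformly in $a$, needs an argument such as the duplication iteration of Lemma~\ref{lem:graph-height}. The reason is that $h_{\mathcal M_a}$ is computed from $a$-dependent sections on the graph, not from a fixed Weil height on the target.
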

\end{samepage}

Fix $x\in(V\setminus Z_V)(\overline K)^m$ and positive integer weights $a$. We use the graph construction of \cite[\S3]{Rem03}. To apply Proposition~\ref{prop:geometric-comparison}, we need an intersection bound and a coefficient bound with constants independent of $a$. We then compare the Weil height of the line bundle defined by these differences with $D_a(x)$.

\subsubsection{Line bundles and intersections}

Write the chosen standard compactification $\overline G$ as the fiber product over $A$ of the bundles $\mathbb P(\mathcal O_A\oplus Q_j)$, where $Q_1,\ldots,Q_t\in\operatorname{Pic}^0(A)$; for $t=0$, it is $A$. Let $\overline V$ be the closure of $V$ in $\overline G$. The boundary line bundle is the fixed $M$. For positive integers $a_1,\ldots,a_m$, the graph compactification in \cite[\S3]{Rem03} extends the map
\[
    (x_i)\longmapsto(a_i^2x_i-a_{i+1}^2x_{i+1})_{i<m}
\]
to a morphism $\overline\beta_a$. The graph projection to $\overline V^m$ is an isomorphism over $V^m$. On the same graph, let $r_a$ be the morphism to $A^{m-1}$ with components $a_i\pi(x_i)-a_{i+1}\pi(x_{i+1})$. Set
\[
    \mathcal M_a=\overline\beta_a^*(M^{\boxtimes(m-1)})
    \otimes r_a^*(N_0^{\boxtimes(m-1)})^{\otimes(t+1)}.
\]
The bundle $\mathcal M_a$ is nef. To see this, pull the tautological quotient on $\mathbb P(\mathcal O_A\oplus Q_j)$ back to a normalized projective curve. It receives a nonzero map from one of two degree-zero line bundles, so it has nonnegative degree. The boundary bundle $\mathcal O(2)\otimes p^*Q_j^{-1}$ is therefore nef, where $p$ denotes projection to $A$. Tensor products and pullbacks preserve nefness, and $N_0$ is ample.

The line bundle $L^*=M+(t+1)\overline\pi^*N_0$ is very ample for this choice of $n$. Pull $\mathcal N_b=\boxtimes_i(L^*)^{\otimes b_i}$ back to the graph. The section maps of \cite[Lemmas~3.2--3.3 and pp.~204--205]{Rem03} give a very ample bundle $\mathcal P_a$ and injections
\[
    \mathcal P_a\longrightarrow\mathcal N_b^{\otimes(t+2)},\qquad
    \mathcal P_a\otimes\mathcal M_a^{-1}
    \longrightarrow\mathcal N_b^{\otimes4(t+1)}.
\]
The first sends coordinate sections to monomials with coefficient $1$. The bundle $\mathcal P_a\otimes\mathcal M_a^{-1}$ is generated by $r_0=(N_2+1)^m(N_3+1)$ sections, where $N_2,N_3$ are the fixed projective dimensions in the boundary and abelian graph embeddings of \cite[\S3]{Rem03}. For the second injection, choose a coordinate section nonzero at $x$. Both injections are then isomorphisms near $x$. These constructions use the theorem of the cube and the projective embeddings, so they apply over our characteristic-zero field. Thus the parameters in Proposition~\ref{prop:geometric-comparison} are
\begin{equation}\label{eq:semi-abelian-parameters}
    t_1=t+2,\qquad t_2=4(t+1),\qquad
    \theta=2^{m^2}.
\end{equation}
The coefficient parameter $\delta$ will be supplied by Lemma~\ref{lem:graph-coefficients}.

Let $Y=\prod_iY_i$ be a product of integral subvarieties with $x_i\in Y_i\subset\overline V$, and put $u_i=\dim Y_i$ and $u=\sum_i u_i$. Write $\widetilde Y_a$ for its strict transform on the graph and $\boldsymbol1=(1,\ldots,1)$. By Vojta's homogeneity theorem \cite[Theorem~5.5]{Voj96}, it suffices to prove positivity when all weights are one. We use the adjacent-pair form in \cite[pp.~205--206]{Rem03}.
\begin{samepage}
    \begin{lemma}\label{lem:graph-intersection}
        For every such product $Y$, we have
        \begin{equation}\label{eq:semi-abelian-intersection-lower}
            (\mathcal M_a^u\cdot\widetilde Y_a)
            \geq\prod_i a_i^{2u_i}
            \geq2^{-m^2}\prod_i b_i^{u_i}.
        \end{equation}
    \end{lemma}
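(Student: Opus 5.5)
The plan has three steps: reduce to unit weights by Vojta's homogeneity theorem, prove positivity of $\mathcal M_{\boldsymbol 1}^u\cdot\widetilde Y_{\boldsymbol 1}$ using the hypothesis that the $Y_i$ avoid positive-dimensional cosets, and then compare $\prod_i a_i^{2u_i}$ with $\prod_i b_i^{u_i}$.

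\emph{Step 1: reduction to $a=\boldsymbol1$.} The bundle $\mathcal M_a$ is a sum of two kinds of terms. The boundary components pull back through $a_i^2x_i-a_{i+1}^2x_{i+1}$, so the weight $a_i^2$ occurs linearly in $\hh_M$. The abelian components pull back through $a_i\pi(x_i)-a_{i+1}\pi(x_{i+1})$, so $a_i$ occurs quadratically in $\hh_N$. In both cases each factor $Y_i$ carries a total homogeneity weight $a_i^2$. Vojta's homogeneity theorem \cite[Theorem~5.5]{Voj96}, in the adjacent-pair form of \cite[pp.~205--206]{Rem03}, says that the intersection number is a polynomial in the weights, with nonnegative coefficients because $\mathcal M_a$ is nef. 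Its top multihomogeneous part is $\prod_i a_i^{2u_i}$ times the corresponding number for $a=\boldsymbol1$. Every other monomial contributes nonnegatively. Hence
\[
(\mathcal M_a^u\cdot\widetilde Y_a)\geq \prod_i a_i^{2u_i}\,(\mathcal M_{\boldsymbol1}^u\cdot\widetilde Y_{\boldsymbol1}).
\]
Concretely, I would use multiplication maps on each factor to identify the strict transforms for different weights, up to finite maps whose degrees cancel against the scaling.

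\emph{Step 2: positivity at unit weights.} Since $\mathcal M_{\boldsymbol1}$ is nef and the intersection number is an integer, it suffices to show that $\mathcal M_{\boldsymbol1}$ is big on $\widetilde Y_{\boldsymbol1}$. Then the top self-intersection is positive, hence at least $1$. Bigness follows once the difference morphism
\[
Y\to G^{m-1},\qquad (y_i)\mapsto(y_i-y_{i+1}),
\]
together with $r$, is generically finite, because $M+(t+1)\overline\pi^*N_0$ is big on the target. If it were not generically finite, a positive-dimensional fiber through a general point would be a common translate $y_i+S$ with $S$ a positive-dimensional subvariety of $G$. Each $Y_i$ would then be stable under translations by a positive-dimensional family. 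This would produce positive-dimensional semi-abelian cosets inside $Y_i\subset\overline V$ through points of $V\setminus Z_V$, contradicting the definition of $Z_V$ via \cite[Theorems~1 and~2]{Abr94}. I would argue this using the stabilizer of the general fiber, as in Lemma~\ref{lem:incidence}, rather than directly.

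\emph{Step 3: the comparison with $b$.} Since $b_i=\eta_ia_i^2\leq 2a_i^2$ and $u_i\leq\dim V<m$, we get
\[
\prod_i b_i^{u_i}\leq 2^{\sum u_i}\prod_i a_i^{2u_i}\leq 2^{m^2}\prod_i a_i^{2u_i}.
\]
This gives the second inequality in \eqref{eq:semi-abelian-intersection-lower}.

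The main obstacle is Step 2: deriving generic finiteness from the exclusion of $Z_V$. Boundary points of $\overline V$ and the semi-abelian (rather than abelian) structure need care. The fallback is to cite R\'emond's corresponding positivity statement \cite[Lemma~4.2]{Rem03}, whose proof is algebraic and valid over any characteristic-zero field.
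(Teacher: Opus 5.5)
Steps~1 and~3 match the paper, and so does the outline of Step~2: generic finiteness of the unit-weight difference map on $\prod_iU_i$, with $U_i=Y_i\cap G$, followed by the projection formula for the pullback of the ample bundle $(L^*)^{\boxtimes(m-1)}$, which gives an integer $\geq1$. The gap is in how you derive generic finiteness. You infer that because a general fiber $\bigcap_i(U_i-y_i)$ is positive-dimensional, each $Y_i$ is stable under a positive-dimensional family of translations. That inference is false: the fiber moves with the point and is not a group. Your argument also never uses $m=\dim V+1$, which is essential. For example, let $V$ be an ample surface in a simple abelian threefold $A$. Then $Z_V=\emptyset$, yet $V\times V\to A$, $(y_1,y_2)\mapsto y_1-y_2$, is surjective and its general fibers are curves, namely $V\cap(V-w)$. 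The method of Lemma~\ref{lem:incidence} does not rescue this. Its count $g+m(d-1)<md$ needs $m>\dim G$. It also proves a stronger statement, that only translations from a finite group occur, and this need not hold for $m=\dim V+1$; a symmetric curve with $m=2$ already fails it.

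The missing idea is to add one general point at a time. This is R\'emond's algebraic lemma \cite[Lemma~2.1]{Rem00}, which the paper invokes through \cite[p.~206]{Rem03}; that is the correct fallback, since the paper uses \cite[Lemma~4.2]{Rem03} for the section and height calculation of Lemma~\ref{lem:graph-height} instead. Let $d_j$ be the local dimension at $0$ of $\bigcap_{i\leq j}(U_i-y_i)$ for general $(y_1,\ldots,y_j)$. Suppose $d_{j+1}=d_j>0$. Then one of the finitely many top-dimensional components $C\ni0$ of $\bigcap_{i\leq j}(U_i-y_i)$ lies in $U_{j+1}-y$ for a dense, hence every, $y\in U_{j+1}$. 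So $C\subset\Stab(U_{j+1})$, and $U_{j+1}$ is a union of cosets of the positive-dimensional group $\Stab(U_{j+1})^0$. Hence $U_{j+1}\subset Z_V$, contradicting $x_{j+1}\in U_{j+1}\setminus Z_V$. Since $d_1\leq\dim V$, the dimension must drop at each step, so $d_m=0$ for $m=\dim V+1$; this is generic finiteness.

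Separately, in Step~1 the homogeneity theorem gives the exact identity $(\mathcal M_a^u\cdot\widetilde Y_a)=\prod_ia_i^{2u_i}(\mathcal M_{\boldsymbol1}^u\cdot\widetilde Y_{\boldsymbol1})$. It holds over $\overline K$ after descending to a finitely generated field embedded in $\mathbb C$. Nefness gives no sign information on individual coefficients. A multiplication-map argument cannot work directly either, because the boundary part scales by $a_i^2$ while the abelian part scales by $a_i$.
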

\end{samepage}

\begin{proof}
    The cited homogeneity theorem gives
    \begin{equation}\label{eq:semi-abelian-intersection-homogeneity}
        (\mathcal M_a^u\cdot\widetilde Y_a)
        =\left(\prod_i a_i^{2u_i}\right)
        (\mathcal M_{\boldsymbol1}^u\cdot\widetilde Y_{\boldsymbol1}).
    \end{equation}
    To pass from the cited proof over $\mathbb C$ to our field, descend the graph, line bundles, and product to a finitely generated characteristic-zero subfield and embed it into $\mathbb C$. Intersection numbers are invariant under field extension. It remains to prove positivity for $a_1=\cdots=a_m=1$.

    Set $U_i=Y_i\cap G$. Since $x_i\in U_i\setminus Z_V$, no $U_i$ is contained in $Z_V$. The difference map
    \[
        (y_1,\ldots,y_m)\longmapsto
        (y_1-y_2,\ldots,y_{m-1}-y_m)
    \]
    is therefore generically finite on $\prod_iU_i$ by \cite[p.~206]{Rem03}, which applies the algebraic argument of \cite[Lemma~2.1]{Rem00} with $m=\dim V+1$. When $a_1=\cdots=a_m=1$, $\mathcal M_{\boldsymbol1}$ is the pullback of the ample bundle $(L^*)^{\boxtimes(m-1)}$ by the extension of this map. The projection formula gives $(\mathcal M_{\boldsymbol1}^u\cdot\widetilde Y_{\boldsymbol1})\geq1$, including when $u=0$. Since $u\leq m(m-1)<m^2$ and $\eta_i\leq2$, this proves \eqref{eq:semi-abelian-intersection-lower}.
\end{proof}

\subsubsection{Polynomials representing multiplication}

We use \cite[Proposition~5.2, pp.~126--128]{Rem00} and \cite[Lemma~3.3]{Rem03} for the coefficient estimate. We give the estimate for geometric heights, keeping track of the common scalar in each coefficient family. Put $n_0=h^0(A,N_0)-1$. Let $\xi_0,\ldots,\xi_{n_0}$ be the coordinate sections of $N_0$, and let $Z_0,\ldots,Z_{n_0}$ be the corresponding polynomial variables. By the theorem of the cube, projective normality, and the K\"unneth formula, there are bihomogeneous polynomials $R_{\ell,\ell'}$ of bidegree $(2,2)$ representing the pullbacks of $\xi_\ell\boxtimes\xi_{\ell'}$ by $(y,z)\mapsto(y+z,y-z)$. Fix these polynomials and let $H_{\mathrm{add}}\geq1$ bound the height of their joint coefficient vector.

Multiplying the sections by a power of one coordinate permits a recursion by addition and subtraction. The exponents below make all the required coordinate powers nonnegative, so this recursion gives polynomials whose coefficient heights have quadratic bounds. Put
\[
    f(r)=\left\lfloor\frac{r^2-1}{8}\right\rfloor\qquad(r\geq1).
\]

\begin{samepage}
    \begin{lemma}\label{lem:multiplication-polynomials}
        For every $r\geq1$ and fixed coordinate index $j$, there is a family $\mathbf Q_{r,j}=(Q_{r,i,j})_{0\leq i\leq n_0}$ of homogeneous polynomials such that
        \begin{enumerate}[beginpenalty=10000]
            \item each polynomial has degree $r^2+f(r)$ and represents $\xi_j^{f(r)}[r]^*\xi_i$ under $[r]^*N_0\simeq N_0^{\otimes r^2}$;
            \item the height $\mathcal H_{r,j}$ of the family's joint coefficient vector satisfies $\mathcal H_{r,j}\leq(r^2-1)H_{\mathrm{add}}$.
        \end{enumerate}
        The estimate is uniform in $j$.
    \end{lemma}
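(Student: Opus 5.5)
We argue by strong induction on $r$, building $\mathbf Q_{r,j}$ from the addition--subtraction polynomials $R_{\ell,\ell'}$. For $r=1$ we have $f(1)=0$, and we take $Q_{1,i,j}=Z_i$: it has degree $1$ and height $0=(1^2-1)H_{\mathrm{add}}$. For $r=2$ we have $f(2)=0$. We apply the $(y,z)\mapsto(y+z,y-z)$ identity with $y=z$. Then $\xi_i(2y)\xi_{\ell'}(0)$ is represented by $R_{i,\ell'}(Z,Z)$, of degree $4$. To remove the factor $\xi_{\ell'}(0)$, we choose $\ell'$ with $\xi_{\ell'}(0)\neq0$; after a constant change of coordinates we may assume $\xi_{\ell'}(0)=1$, since constant coefficients do not change geometric heights. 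This gives height at most $H_{\mathrm{add}}\leq3H_{\mathrm{add}}$. Rather than organizing the recursion by parity alone, we use the general step $r=s+t$ with $s-t\in\{0,1,2\}$, so that $s,t$ are approximately $r/2$.

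\textbf{Recursive step.} Write $r=s+t$ and $s-t=e$ with $e$ small. Substituting $y=[s]x$, $z=[t]x$ into $R_{i,\ell'}$ gives a representative of $\xi_i([r]x)\,\xi_{\ell'}([e]x)$. In this substitution, each variable $Z_\ell$ in the first group is replaced by $Q_{s,\ell,j}$, which represents $\xi_j^{f(s)}\xi_\ell([s]x)$. Likewise, each variable in the second group is replaced by $Q_{t,\ell,j}$. Since $R$ has bidegree $(2,2)$, the result represents
\[
  \xi_j^{2f(s)+2f(t)}\,\xi_i([r]x)\,\xi_{\ell'}([e]x).
\]
The spurious factor $\xi_{\ell'}([e]x)$ must now be removed. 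We choose $\ell'=j$ and take $e\in\{0,1\}$. For $e=1$ this factor is $\xi_j(x)$. For $e=0$ it is the constant $\xi_j(0)$; if that is zero we use $e$ equal to $1$ or $2$ instead, and replace $\xi_j([e]x)$ by the already constructed $Q_{e,\cdot,j}$ on the other side. To avoid division, we do not cancel: we note that
\[
  2f(s)+2f(t)+(\text{degree of the spurious factor in }\xi_j)\leq f(r).
\]
We then multiply by the missing power $Z_j^{f(r)-\ldots}$, which has coefficient $1$, and verify the identity as sections. This inequality is the arithmetic behind the choice of $f$. With $s,t\approx r/2$ we get $2f(s)+2f(t)\approx(s^2+t^2)/4\approx r^2/8$, and the floor $\lfloor(r^2-1)/8\rfloor$ is arranged to be exactly large enough. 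Once all exponents are nonnegative, the degree count $2(s^2+f(s))+2(t^2+f(t))+(\text{padding})=r^2+f(r)$ follows from $2s^2+2t^2=r^2+e^2$.

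\textbf{Height bound.} At each place $v$ the ultrametric inequality bounds the substituted family by
\[
  |R|_v\,|\mathbf Q_s|_v^2\,|\mathbf Q_t|_v^2 .
\]
Summing over places gives
\[
  \mathcal H_{r,j}\leq H_{\mathrm{add}}+2\mathcal H_{s,j}+2\mathcal H_{t,j}
  \leq\bigl(1+2(s^2-1)+2(t^2-1)\bigr)H_{\mathrm{add}}
  =(r^2+e^2-3)H_{\mathrm{add}}.
\]
For $e\leq1$ this is at most $(r^2-1)H_{\mathrm{add}}$. Uniformity in $j$ holds because only $H_{\mathrm{add}}$ enters.

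\textbf{Main obstacle.} The delicate point is the bookkeeping that makes the spurious factor a pure power of $\xi_j$ while keeping the height constant at $r^2-1$ rather than $r^2+3$. Concretely, we must handle the case $e=0$ with $\xi_j(0)=0$, and check that $f$ satisfies the required inequality for every $r$ (the small cases $r\leq4$ by hand). If the $e=0$ case fails, the plan is to always use odd splittings $r=(s+1)+s$ and obtain even $r$ from the general identity with $e=2$, absorbing $\xi_j([2]x)$ via $Q_{2,\cdot,j}$ at the cost of height $H_{\mathrm{add}}$. This remains within $(r^2-1)H_{\mathrm{add}}$ because $4+2\cdot1-3\leq\ldots$ is checked against the slack.
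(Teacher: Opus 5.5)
Your odd case ($e=1$, second index $\ell'=j$, split $r=\lceil r/2\rceil+\lfloor r/2\rfloor$) is the paper's construction, and your degree count and height recursion are correct there. The gap is the even case.

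\textbf{Where the even case breaks.} For even $r$, the parity of $e=s-t$ equals that of $r$, so $e\in\{0,2,\dots\}$ and $e=1$ is not available. Because you insist on $\ell'=j$, the case $e=0$ leaves the factor $\xi_j(0)$, and this can vanish. The lemma is needed for arbitrary $j$: later $j$ is chosen only so that $\xi_j(\pi(x_i))\neq0$. A basis of $H^0(A,N_0)$ may also contain sections vanishing at the origin.

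Your fallback $e=2$ fails for two reasons:
\begin{enumerate}
\item The spurious factor is then $[2]^*\xi_j$. This is a nonconstant section of $N_0^{\otimes4}$ and not a power of $\xi_j$. It cannot be padded away by a monomial. It also cannot be ``absorbed via $Q_{2,\cdot,j}$'' without dividing by a polynomial, which you have no way to do with height control.
\item Numerically, your own recursion gives $(r^2+e^2-3)H_{\mathrm{add}}=(r^2+1)H_{\mathrm{add}}$ for $e=2$. This already exceeds the target $(r^2-1)H_{\mathrm{add}}$, before any absorption cost is added.
\end{enumerate}

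\textbf{The fix.} Use the trick you already applied for $r=2$. When $e=0$, the second index of $R$ need not be $j$, because $\xi_{\ell'}([0]x)$ is a scalar.
\begin{enumerate}
\item Fix once and for all $j_0$ with $\xi_{j_0}(0)\neq0$.
\item Substitute $\mathbf Q_{r/2,j}$ into both groups of variables of $R_{i,j_0}$. The result represents $\xi_{j_0}(0)\,\xi_j^{4f(r/2)}[r]^*\xi_i$.
\item Divide the whole family by this common nonzero scalar. This leaves the projective height unchanged by the product formula. The justification is not that the scalar is constant; it need not lie in $k$.
\item Multiply by $Z_j^{f(r)-4f(r/2)}$.
\end{enumerate}
Your recursion then gives $(r^2-3)H_{\mathrm{add}}$ in the even case and $(r^2-2)H_{\mathrm{add}}$ in the odd case, as in the paper.

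\textbf{The inequalities on $f$.} You should prove the two exponent inequalities rather than rely on the heuristic ``$\approx r^2/8$''.
\begin{enumerate}
\item Write $s^2-1=8q+\rho$ with $0\leq\rho<8$. Then $f(2s)=4q+\lfloor(4\rho+3)/8\rfloor\geq4f(s)$.
\item Note that $f(2s+1)=s(s+1)/2$ is an integer. The integer $2f(s)+2f(s+1)+1$ is at most $s(s+1)/2+3/4$, so $f(2s+1)\geq2f(s)+2f(s+1)+1$.
\end{enumerate}
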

\end{samepage}

\begin{proof}
    Start with $Q_{1,i,j}=Z_i$. For $r>1$, put $r_-=\lfloor r/2\rfloor$ and $r_+=\lceil r/2\rceil$. The addition and subtraction of $[r_+]y$ and $[r_-]y$ give $[r]y$ and $[r_+-r_-]y$. If $r$ is odd, the substituted polynomial $R_{i,j}(\mathbf Q_{r_+,j},\mathbf Q_{r_-,j})$ therefore represents
    \[
        \xi_j^{2f(r_-)+2f(r_+)+1}[r]^*\xi_i.
    \]
    Multiply it by $Z_j^{f(r)-2f(r_-)-2f(r_+)-1}$. If $r$ is even, choose once and for all an index $j_0$ such that $\xi_{j_0}(0)\ne0$. Substitute into $R_{i,j_0}$, divide by the resulting common nonzero scalar, and multiply by $Z_j^{f(r)-4f(r_-)}$. These exponents are nonnegative because
    \[
        f(2s)\geq4f(s),\qquad f(2s+1)\geq2f(s)+2f(s+1)+1.
    \]
    The two constructions have the required degree and represent the required sections. Multiplication by a monomial and division by a common nonzero scalar preserve the height of the coefficient vector. The ultrametric inequality gives
    \[
        \mathcal H_{1,j}=0,\qquad
        \mathcal H_{r,j}\leq H_{\mathrm{add}}+2\mathcal H_{r_-,j}+2\mathcal H_{r_+,j}
        \leq(r^2-1)H_{\mathrm{add}},
    \]
    where the last inequality follows by induction from $1+2(r_-^2-1)+2(r_+^2-1)\leq r^2-1$. This bound holds for each fixed $j$, uniformly in the choice of $j$.
\end{proof}

\subsubsection{Coefficients of the section maps}

We apply these polynomials to adjacent pairs. The formula in \cite[Lemma~3.3, p.~202]{Rem03} then gives the coefficient bound for the second injection.

\begin{samepage}
    \begin{lemma}\label{lem:graph-coefficients}
        The second injection and its generating family can be chosen so that their images are represented by a polynomial family $\mathbf P_a$ of multidegree $4(t+1)b$ satisfying
        \begin{equation}\label{eq:semi-abelian-coefficients}
            h(\mathbf P_a)\leq\delta H_a,
            \qquad\delta=2(t+1)H_{\mathrm{add}}.
        \end{equation}
        The constant is independent of the weights and the fields of definition of the points.
    \end{lemma}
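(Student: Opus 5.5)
We will build the polynomial family $\mathbf P_a$ explicitly from R\'emond's formula \cite[Lemma~3.3, p.~202]{Rem03} for the sections of $\mathcal P_a\otimes\mathcal M_a^{-1}$. Each generating section is a product over the $m-1$ adjacent pairs of a boundary factor and an abelian factor. The boundary factor comes from $\overline\beta_a$, and the abelian factor from $r_a$. It suffices to bound, pair by pair, the coefficient height of the polynomial representing each factor. Heights of a product of coefficient families add, by the ultrametric inequality and Gauss's lemma in the non-archimedean setting, where the Gauss norm is multiplicative.

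\textbf{Abelian factor.} For the pair $(i,i+1)$, the abelian component $a_i\pi(x_i)-a_{i+1}\pi(x_{i+1})$ is represented in two stages. First compose with Lemma~\ref{lem:multiplication-polynomials} for $r=a_i$ and $r=a_{i+1}$, which gives families of heights at most $(a_i^2-1)H_{\mathrm{add}}$ and $(a_{i+1}^2-1)H_{\mathrm{add}}$. Then substitute these into the fixed polynomials $R_{\ell,\ell'}$ representing $(y,z)\mapsto(y+z,y-z)$ and keep the difference component. This gives height at most $H_{\mathrm{add}}(1+a_i^2+a_{i+1}^2)$. The auxiliary powers $\xi_j^{f(r)}$ are absorbed into the multidegree: the multidegree bookkeeping $r^2+f(r)\le 2r^2$ keeps the total within $4(t+1)b$. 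We raise this factor to the $(t+1)$st power, as in $\mathcal M_a$, and sum over the $m-1$ pairs. Since $\eta_i$ counts how many pairs contain $x_i$, the total is bounded by $(t+1)H_{\mathrm{add}}\sum_i\eta_i a_i^2+O(m)$. The constant term is absorbed using $H_a\geq m$ and $H_{\mathrm{add}}\geq1$.

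\textbf{Boundary factor.} The torus coordinates of $a_i^2x_i-a_{i+1}^2x_{i+1}$ are monomials in the fiber coordinates of $\mathbb P(\mathcal O_A\oplus Q_j)$, twisted by the line bundles $Q_j^{a_i^2}\otimes Q_j^{-a_{i+1}^2}$ on the abelian part. In R\'emond's graph construction these twists are trivialized through the same multiplication and addition polynomials, using the cube relations for $Q_j\in\operatorname{Pic}^0$ and the embedding by $N_0$. The coefficient heights therefore again come only from Lemma~\ref{lem:multiplication-polynomials} applied with $r=a_i,a_{i+1}$ and from substitution into $R_{\ell,\ell'}$. The fiber-coordinate monomials themselves have coefficient $1$. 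We aim to show that this contributes at most $(t+1)H_{\mathrm{add}}H_a$ in total. Adding the two contributions gives $\delta=2(t+1)H_{\mathrm{add}}$. Independence of the weights and of the field is automatic: the only inputs are $H_{\mathrm{add}}$, which is fixed, and the uniform bound of Lemma~\ref{lem:multiplication-polynomials}. Throughout we normalize each family by a common scalar, which leaves heights unchanged by the product formula.

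\textbf{Main obstacle.} The delicate step is the boundary factor. We must check that R\'emond's representation of the twisted torus coordinates for the weights $a_i^2$ needs only multiplication by integers of size at most $a_i$ on $A$; otherwise we would get quartic rather than quadratic growth. We would first try to write $Q_j^{a^2}$ via $[a]^*Q_j\simeq Q_j^{a}$ combined with the quadratic behaviour of $N_0$. Failing that, we would simply absorb an extra fixed factor into $\delta$, which is harmless for Proposition~\ref{prop:geometric-comparison}.
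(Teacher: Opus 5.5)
\textbf{There is a genuine gap.} Your total $\delta=2(t+1)H_{\mathrm{add}}$ comes out only through two errors that cancel each other.

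First, your per-pair bound is off by a factor of two. The polynomials $R_{\ell,\ell'}$ have bidegree $(2,2)$, so substituting $\mathbf Q_{a_i,j}$ and $\mathbf Q_{a_{i+1},k}$ gives coefficient height at most
$H_{\mathrm{add}}+2\mathcal H_{a_i,j}+2\mathcal H_{a_{i+1},k}\leq(2a_i^2+2a_{i+1}^2-3)H_{\mathrm{add}}\leq2(a_i^2+a_{i+1}^2)H_{\mathrm{add}}$,
with no $O(m)$ remainder.

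You also cannot ``keep the difference component''. The polynomial $R_{\ell,\ell'}(\mathbf Q_{a,j},\mathbf Q_{b,k})$ represents the pullback of $\xi_\ell\boxtimes\xi_{\ell'}$ by $(y,z)\mapsto(ay+bz,ay-bz)$, multiplied by $p_1^*\xi_j^{2f(a)}p_2^*\xi_k^{2f(b)}$. It is these sum--difference products that enter R\'emond's generators.

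Since $\sum_{i<m}(a_i^2+a_{i+1}^2)=\sum_i\eta_ia_i^2=H_a$, the $(t+1)$-st power of the product of the pair polynomials over adjacent pairs already has height at most $2(t+1)H_{\mathrm{add}}H_a=\delta H_a$. This single term uses the entire budget.

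The paper's proof ends at exactly this point. In the formula of \cite[Lemma~3.3]{Rem03} with $d=t+1$, only two other ingredients appear, and neither contributes to the height:
\begin{itemize}
\item the padding powers $\prod_i(Z^{(i)}_{j_i})^{2d\eta_i(a_i^2-f(a_i))}$, with $\xi_{j_i}(\pi(x_i))\neq0$, which raise the multidegree to exactly $4(t+1)\eta_ia_i^2$;
\item the boundary factors, which are monomials with coefficient $1$.
\end{itemize}

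Your separate budget of $(t+1)H_{\mathrm{add}}H_a$ for the boundary therefore double counts. It is also unproven: you state it only as an aim and flag it as the main obstacle. Once the factor two is corrected, a boundary contribution proportional to $H_a$, as you propose, would push the constant beyond $2(t+1)H_{\mathrm{add}}$.

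Your fallback of enlarging $\delta$ would suffice for Proposition~\ref{prop:geometric-comparison}, which needs only a constant independent of the weights and the fields. However, it does not prove the lemma as stated. It also still depends on the quadratic growth you leave unverified: if multiplication by $a_i^2$ on $A$ entered, the height would grow like $a_i^4$ and no fixed $\delta$ would work.

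The missing step is the observation that, in R\'emond's formula, the pair polynomials are the only factors with nontrivial coefficients. They involve only multiplication by $a_i$ and $a_{i+1}$, so your obstacle never arises.
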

\end{samepage}

\begin{proof}
    For positive integers $a,b$ and fixed coordinate indices $j,k$, the family
    \[
        P_{a,b;j,k;\ell,\ell'}=R_{\ell,\ell'}(\mathbf Q_{a,j},\mathbf Q_{b,k})
    \]
    has bidegree $(2(a^2+f(a)),2(b^2+f(b)))$. Its restriction represents the pullbacks of $\xi_\ell\boxtimes\xi_{\ell'}$ by $(y,z)\mapsto(ay+bz,ay-bz)$, multiplied by $p_1^*\xi_j^{2f(a)}p_2^*\xi_k^{2f(b)}$, where $p_1,p_2:A\times A\to A$ are the projections. Its joint coefficient height is at most
    \[
        H_{\mathrm{add}}+2\mathcal H_{a,j}+2\mathcal H_{b,k}
        \leq2(a^2+b^2)H_{\mathrm{add}}.
    \]
    Choose $j_i$ such that $\xi_{j_i}(\pi(x_i))\ne0$ for each $i$, and write $Z_j^{(i)}$ for the coordinate variables on the $i$th factor. The formula in \cite[Lemma~3.3, p.~202]{Rem03} takes the product of these pair polynomials over adjacent factors, raises each to the power $d$, and multiplies by
    \[
        \prod_i\bigl(Z_{j_i}^{(i)}\bigr)^{2d\eta_i(a_i^2-f(a_i))}.
    \]
    The resulting family has exactly multidegree $4d\eta_i a_i^2$ in factor $i$: the adjacent-pair product contributes $2d\eta_i(a_i^2+f(a_i))$. Its joint coefficient height is at most
    \[
        2dH_{\mathrm{add}}\sum_{i<m}(a_i^2+a_{i+1}^2)
        =2dH_{\mathrm{add}}H_a.
    \]
    For the second injection above, take $d=t+1$. The additional boundary factors are monomials with coefficient $1$. This gives \eqref{eq:semi-abelian-coefficients}. The fixed addition family and invariance of normalized heights under finite extension make $\delta$ independent of the weights and the field of definition of the points.
\end{proof}

\subsubsection{Comparison with canonical heights}

Let $h_{\mathcal M_a}$ be the Weil height defined by the graph coordinates. We next compare it with the canonical heights, uniformly in $a$. The following is the geometric form of the section calculation in \cite[Lemma~4.2]{Rem03}.

\begin{samepage}
    \begin{lemma}\label{lem:graph-height}
        There is a constant $C>0$, depending only on the fixed embeddings and height data, such that
        \begin{equation}\label{eq:weightedcomparison}
            h_{\mathcal M_a}(x)\leq
            \sum_{i<m}\bigl(\hh_M(a_i^2x_i-a_{i+1}^2x_{i+1})
            +c\hh_N(\pi(a_ix_i-a_{i+1}x_{i+1}))\bigr)+CH_a
        \end{equation}
        for all the weights and points under consideration, independently of their fields of definition.
    \end{lemma}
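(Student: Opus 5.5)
The plan is to compare, place by place, the Weil height of $\mathcal M_a$ at $x$ with the Weil heights of the pushforward points under $\overline\beta_a$ and $r_a$. Then replace those Weil heights by canonical heights, and control the error uniformly in $a$ through the same quadratic coefficient bookkeeping as in Lemma~\ref{lem:multiplication-polynomials}. Since $\mathcal M_a=\overline\beta_a^*(M^{\boxtimes(m-1)})\otimes r_a^*(N_0^{\boxtimes(m-1)})^{\otimes(t+1)}$, and the graph coordinates of $\mathcal P_a$ and $\Sigma$ are built from the fixed embeddings, $h_{\mathcal M_a}(x)=h_{\mathcal P_a}(x)-h(\Sigma(x))$ splits into a boundary contribution and an abelian contribution for each adjacent pair. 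I would treat these two contributions separately.

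\textbf{Abelian part.} For each $i<m$, the sections in $\Sigma$ restricted to the abelian factor are the products of the pair polynomials $P_{a_i,a_{i+1};j,k;\ell,\ell'}$ from Lemma~\ref{lem:graph-coefficients}, divided by coordinate monomials. At each place $v$, the difference $\log\max|P(\ldots)|_v-\log\max|\xi(a_i\pi(x_i)\pm a_{i+1}\pi(x_{i+1}))|_v$ is bounded above by $\log$ of the coefficient norm, by the ultrametric inequality. It is bounded below by a Nullstellensatz-type inverse estimate for the fixed addition family. Summing over places should give
\[
h_{\text{pair}}\le h_{N_0}(a_i\pi(x_i)-a_{i+1}\pi(x_{i+1}))+h_{N_0}(a_i\pi(x_i)+a_{i+1}\pi(x_{i+1}))-2f\text{-terms}+O((a_i^2+a_{i+1}^2)H_{\mathrm{add}}).
\]
This is not yet the form of \eqref{eq:weightedcomparison}, because the sum term should disappear. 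I expect the cleaner route is to work directly with the definition: the Weil height of $r_a^*N_0$ relative to the chosen generating sections equals $h_{N_0}(r_a(x))$ up to the coefficient height of the representing family. That height is $O(H_a)$ by Lemma~\ref{lem:graph-coefficients}, with $\delta=2(t+1)H_{\mathrm{add}}$. Then $h_{N_0}=\hh_{N_0}+O(1)$ with a fixed constant by Tate's limit. Since $\hh_{N_0}=n\hh_N$, the factor $t+1$ produces $c\hh_N$.

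\textbf{Boundary part.} Write $\overline G$ as the fiber product of $\mathbb P(\mathcal O_A\oplus Q_j)$. The boundary coordinates of $a_i^2x_i-a_{i+1}^2x_{i+1}$ are given by torus characters. These are raised to the powers $a_i^2$ and $a_{i+1}^2$ and twisted by sections of $[a_i^2]^*Q_j$, represented as in \cite[Lemma~3.3]{Rem03}. The local error at each place is again controlled by the coefficient height of the cube-theorem isomorphisms $[n]^*Q\simeq Q^{\otimes n}$. These scale linearly in $n=a_i^2$, so they add up to $O(H_a)$. Then $h_M=\hh_M+O(1)$.

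\textbf{Main obstacle.} The hardest step is making the additive errors linear in $H_a$ rather than depending on $a$ in some worse way or on the field. All errors should come either from (i) the fixed comparisons $h=\hh+O(1)$, which are independent of fields because heights are normalized, or (ii) coefficient heights of the fixed families raised through the recursions, bounded by $O(a_i^2+a_{i+1}^2)$ as in Lemma~\ref{lem:multiplication-polynomials}. To get the inequality in the right direction, I only need upper bounds $\log|\text{poly}(x)|_v\le\log|\text{coeffs}|_v+\deg\cdot\log\max|\text{coords}|_v$. This holds because $h_{\mathcal M_a}=h_{\mathcal P_a}-h(\Sigma(x))$ and the $\mathcal P_a$ coordinates are monomials with coefficient $1$. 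I would therefore bound $h_{\mathcal P_a}(x)$ exactly and $h(\Sigma(x))$ from below, via the defining identity $\Sigma\cdot(\text{pulled-back differences})=\text{monomials}$. Only that lower bound needs the inverse addition estimate, and that estimate uses fixed data.
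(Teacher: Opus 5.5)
There is a genuine gap, and it is at the step you yourself call the main obstacle. You assert that the boundary part costs only $O(H_a)$, but the mechanism you name does not deliver this.

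First, the objects you propose to track do not exist in general. A nontrivial $Q_j\in\operatorname{Pic}^0(A)$ has no nonzero sections, so there are no ``sections of $[a_i^2]^*Q_j$'' whose coefficient heights could be followed. Likewise $M$ is not globally generated: on $\mathbb P(\mathcal O_A\oplus Q_j)$ with $Q_j$ nontrivial, $\mathcal O(2)\otimes p^*Q_j^{-1}$ has direct image $Q_j^{-1}\oplus\mathcal O\oplus Q_j$, so its sections are the multiples of the boundary section. Hence there is no generating family of target sections of $\overline\beta_a^*M^{\boxtimes(m-1)}$ with which to run your lower bound for $h(\Sigma(x))$.

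Second, any comparison with a fixed Weil height of $M$ at the target point must pass through positive-degree bundles. This means either a difference $h_{M+\overline\pi^*N'}-h_{N'}\circ\overline\pi$, or sections of $\tau_{P_j}^*N_0$ and $N_0$, evaluated at $a_i^2\pi(x_i)-a_{i+1}^2\pi(x_{i+1})$. Expressing these through the source coordinates is multiplication by $a_i^2$ and $a_{i+1}^2$. For that, the estimate of Lemma~\ref{lem:multiplication-polynomials} with $r=a_i^2$ gives degrees and coefficient heights of order $a_i^4+a_{i+1}^4$, not $O(H_a)$. These terms do cancel in the end, but coefficient estimates cannot see the cancellation. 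So ``then $h_M=\hh_M+O(1)$'' does not close the argument.

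The abelian part has a smaller gap of the same kind. The abelian components of $\Sigma$ are built from pair polynomials representing pullbacks under $(y,z)\mapsto(ay+bz,ay-bz)$. Lemma~\ref{lem:graph-coefficients} bounds these, not generating sections of $r_a^*N_0$. The sum term therefore genuinely occurs in $h_{\mathcal M_a}=h_{\mathcal P_a}-h(\Sigma(x))$. It must be removed by the parallelogram law for $\hh_{N_0}$; your ``cleaner route'' does not address it.

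The paper never needs a comparison with a fixed Weil height that is uniform in $a$. It sets $h_{\mathcal E_a}(x)=h_{\mathcal M_a}(x)-(t+1)\sum_{i<m}\hh_{N_0}(a_i\pi(x_i)-a_{i+1}\pi(x_{i+1}))$, so the abelian term of \eqref{eq:weightedcomparison} is built in. Using the section formulas and the parallelogram law, it relates $h_{\mathcal E_a}$ to R\'emond's boundary-coordinate height $B_a$ up to $O(H_a)$. It then proves the doubling inequality $2h_{\mathcal E_a}(x)\le h_{\mathcal E_a}([2]x)+CH_a$. The defect is $O(H_a)$ because it comes only from those formulas and from the fixed identities $[2]^*\tau_P^*N_0\simeq(\tau_P^*N_0)^{\otimes2}\otimes N_0^{\otimes2}$, $P\in\{0,\pm P_j\}$, used with total tensor exponent $O(H_a)$. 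Iterating gives $h_{\mathcal E_a}(x)\le2^{-r}h_{\mathcal E_a}([2]^rx)+CH_a$. For fixed $a$, $h_{\mathcal E_a}$ differs from the pullback of a Weil height of $M^{\boxtimes(m-1)}$ by a bounded function depending on $a$. The factor $2^{-r}$ kills this function, and the limit is $\sum_{i<m}\hh_M(a_i^2x_i-a_{i+1}^2x_{i+1})$.

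This Tate telescoping on the graph, which makes the $a$-dependent comparison constant harmless, is the idea missing from your proposal. A direct route like yours could be repaired by working with local norms on the $Q_j$ rather than coefficients. Along an addition chain, each use of the fixed isomorphism $\operatorname{add}^*Q_j\simeq p_1^*Q_j\otimes p_2^*Q_j$ on $A\times A$ costs a bounded local defect, so the total error is linear in $a_i^2$. That argument, together with the comparison to the canonical metric (itself a Tate limit), is what your sketch would still have to supply.
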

\end{samepage}

\begin{proof}
    After a fixed finite extension, choose $P_j\in A(\overline{K})$ with $Q_j\simeq\tau_{P_j}^*N_0\otimes N_0^{-1}$, where $\tau_{P_j}$ denotes translation by $P_j$. For $P\in\{0,\pm P_j\}$, the identity
    \[
        [2]^*\tau_P^*N_0\simeq(\tau_P^*N_0)^{\otimes2}\otimes N_0^{\otimes2}
    \]
    compares two fixed generating families of sections. Their local norm ratios are bounded on a projective model and contribute zero outside finitely many places. Normalization preserves these bounds after finite extension. The graph construction uses the families with total tensor exponent $O(H_a)$, so the resulting errors are $O(H_a)$ with a fixed implied constant.

    Let $\mathcal E_a=\overline\beta_a^*(M^{\boxtimes(m-1)})$ and choose its Weil height by
    \[
        h_{\mathcal E_a}(x)=h_{\mathcal M_a}(x)
        -(t+1)\sum_{i<m}\hh_{N_0}(a_i\pi(x_i)-a_{i+1}\pi(x_{i+1})),
    \]
    and let $B_a(x)$ be the projective height of the boundary coordinates in \cite[Definition~3.2]{Rem03}. The section formulas and the parallelogram law give
    \begin{align*}
        h_{\mathcal E_a}(x)&=B_a(x)-t\sum_i\eta_i a_i^2\hh_{N_0}(\pi(x_i))+O(H_a),\\
        2B_a(x)+2t\sum_i\eta_i a_i^2\hh_{N_0}(\pi(x_i))
        &\leq B_a([2]x)+O(H_a).
    \end{align*}
    Since $\hh_{N_0}([2]y)=4\hh_{N_0}(y)$, these imply $2h_{\mathcal E_a}(x)\leq h_{\mathcal E_a}([2]x)+CH_a$. Iteration yields
    \[
        h_{\mathcal E_a}(x)\leq2^{-r}h_{\mathcal E_a}([2]^rx)+(1-2^{-r})CH_a.
    \]
    For fixed $a$, $h_{\mathcal E_a}$ differs by a bounded function from the pullback of a Weil height for $M^{\boxtimes(m-1)}$. Its normalized limit is the sum of linear heights in \eqref{eq:weightedcomparison}; the bounded difference vanishes after multiplication by $2^{-r}$. This proves the estimate with $C$ independent of $a$.
\end{proof}

\begin{proof}[Proof of Proposition~\ref{prop:weighted-comparison}]
    For the embedding defined by $L^*$, its Weil height satisfies $h_{L^*}=\hh_{L^*}+O(1)$ uniformly on algebraic points; see \cite[\S2, p.~195]{Rem03}. Apply Proposition~\ref{prop:geometric-comparison} with the parameters \eqref{eq:semi-abelian-parameters}, the intersection bound \eqref{eq:semi-abelian-intersection-lower}, and the coefficient bound \eqref{eq:semi-abelian-coefficients}. Since $b_i/b_{i+1}\geq a_i^2/(2a_{i+1}^2)$, a fixed lower bound on $a_i/a_{i+1}$ supplies the weight hypothesis. The bounded difference between $h_{L^*}$ and $\hh_{L^*}$ supplies the height hypothesis after increasing $h_0$. Combining the conclusion with Lemma~\ref{lem:graph-height} proves \eqref{eq:weightedlower}; the bounded height differences contribute a constant multiple of $H_a$. All constants depend only on the stated fixed data.
\end{proof}

\subsection{Completion of the Vojta inequality}\label{sec:vojta-completion}

\subsubsection{Integer weights and normalized differences}

\begin{proof}[Proof of Theorem~\ref{thm:vojta} over a curve]
    We approximate the normalized points in Theorem~\ref{thm:vojta} by integer multiples and apply Proposition~\ref{prop:weighted-comparison}, as in \cite[Lemma~4.1 and proof of Theorem~4.1]{Rem03}. Choose $C_1>0$, $C_2>\max\{\kappa,1\}$, and $C_3\geq h_0$ so that
    \[
        C_4(C_1^{-1}+C_1^{-2})<1,\qquad 2C_5/C_3<1.
    \]
    Suppose a tuple outside $Z_V$ satisfies the hypotheses of Theorem~\ref{thm:vojta} for $\hh_{L^*}$ with these constants. Put $H_i=\hh_{L^*}(x_i)$. For each positive integer $\nu$, choose $a_i(\nu)$ nearest to $\nu/\sqrt{H_i}$. These integers are positive for large $\nu$, and
    \[
        \frac{a_i(\nu)}{a_{i+1}(\nu)}\longrightarrow
        \sqrt{\frac{H_{i+1}}{H_i}}\geq C_2>\kappa,
        \qquad
        \frac{a_i(\nu)^2}{\nu^2}\longrightarrow\frac1{H_i}.
    \]
    Thus \eqref{eq:weightedlower} applies. Divide it by $\nu^2$ and let $\nu$ tend to infinity. Homogeneity and continuity on the real span of the fixed tuple identify the limit of $D_{a(\nu)}(x)/\nu^2$ with
    \[
        \sum_{i<m}\left(
        \hh_M\left(\frac{x_i}{H_i}-\frac{x_{i+1}}{H_{i+1}}\right)
        +c\hh_N\left(\frac{\pi(x_i)}{\sqrt{H_i}}-\frac{\pi(x_{i+1})}{\sqrt{H_{i+1}}}\right)
        \right).
    \]
    The inequalities in Theorem~\ref{thm:vojta}\textup{(2)}, applied to $\hh_{L^*}$, bound this by $(m-1)(C_1^{-1}+C_1^{-2})$. Since $\sum_i\eta_i=2(m-1)$ and $H_i\geq C_3$, the limiting inequality gives
    \[
        2\leq C_4(C_1^{-1}+C_1^{-2})+\frac{2C_5}{C_3}<2,
    \]
    a contradiction. Here the auxiliary degree has already tended to infinity with the field and the weights fixed. Thus \eqref{eq:weightedlower} has no genus term before the weights vary with $\nu$.

    It remains to replace $\hh_{L^*}$ by $\hh_L$. Suppose that $x,y$ have positive height and the two expressions in Theorem~\ref{thm:vojta}\textup{(2)}, after taking the square root of the second, are at most $\delta$. Set
    \[
        r_x=\frac{\hh_{L^*}(x)}{\hh_L(x)}\in[1,c],\qquad
        r_y=\frac{\hh_{L^*}(y)}{\hh_L(y)}\in[1,c].
    \]
    The identity $r_x=c-(c-1)\hh_M(x)/\hh_L(x)$ and the reverse triangle inequality give $|r_x-r_y|\leq(c-1)\delta$. On $[1,c]$, the functions $s^{-1}$ and $s^{-1/2}$ have Lipschitz constants $1$ and $1/2$. For $\hh_{L^*}$, whose quadratic part is $c(\hh_N\circ\pi)$, the corresponding first expression is at most $c\delta$ and the square root of the second is at most $\sqrt c(1+(c-1)/2)\delta\leq c^{3/2}\delta$. Moreover,
    \[
        \hh_{L^*}(x)\geq\hh_L(x),\qquad
        \frac{\hh_{L^*}(y)}{\hh_{L^*}(x)}\geq\frac{\hh_L(y)}{c\hh_L(x)}.
    \]
    If $C_1,C_2,C_3$ are the constants for $\hh_{L^*}$, choose $c_1\geq c^{3/2}C_1$, $c_2\geq\sqrt c\,C_2$, and $c_3\geq C_3$. The hypotheses for $\hh_L$ would imply those already excluded for $\hh_{L^*}$. This proves Theorem~\ref{thm:vojta} over a curve.
\end{proof}

\subsubsection{Higher transcendence degree}\label{sec:slicing}

To pass to higher transcendence degree, we choose a generic curve on the fixed polarized model. We need a single curve that preserves normalized heights for every finite extension of $K$. Suppose $b=\dim\mathcal B>1$. Choose $e\geq1$ such that $\mathcal H^{\otimes e}$ is very ample, and let $k'$ be an algebraic closure of the function field of the full parameter space of $(b-1)$-tuples of hyperplanes in this embedding. Let $C/k'$ be their generic complete intersection in $\mathcal B_{k'}$. Since $\mathcal B$ is normal, its singular locus has codimension at least two and is avoided by this generic curve. Bertini's theorem in characteristic zero then makes $C$ smooth; it is projective as a closed subvariety of $\mathcal B_{k'}$. The incidence variety dominates $\mathcal B$, so it gives an injection
\[
    K\hookrightarrow k'(C).
\]
We fix an extension of this injection from $\overline K$ to an algebraic closure of $k'(C)$.

Put $\lambda=e^{b-1}$, and denote the base changes of $M,N,L$ by $M_C,N_C,L_C$. Heights over $k'(C)$ use the normalization of Section~\ref{sec:aux-vector}. The generic curve has the following properties.

\begin{samepage}
    \begin{lemma}\label{lem:generic-curve}
        The extension $K\hookrightarrow k'(C)$ has the following properties.
        \begin{enumerate}[beginpenalty=10000]
            \item For every finite extension $F/K$, the algebra $F\otimes_K k'(C)$ is a field of degree $[F:K]$ over $k'(C)$.
            \item The canonical heights satisfy $\hh_{M_C}=\lambda\hh_M$, $\hh_{N_C}=\lambda\hh_N$, and $\hh_{L_C}=\lambda\hh_L$ on the original algebraic points of $G,A,G$, respectively.
            \item Every point of $(V\setminus Z_V)(\overline K)$ remains outside the Mordell exceptional locus after extension to an algebraic closure of $k'(C)$.
        \end{enumerate}
    \end{lemma}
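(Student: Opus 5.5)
For (1), I would use that $k'(C)$ is the function field of the generic complete intersection curve, so $K$ is algebraically closed in $k'(C)$. Indeed, $K/k$ is regular and the incidence variety $I\to\mathcal B$ is dominant with geometrically integral generic fiber: the fiber over a point of $\mathcal B$ is a linear space of hyperplane tuples through that point. Hence $k'(C)$, a subextension of the function field of $I$ made regular over $K$, is a regular extension of $K$. Given a finite extension $F/K$, let $\mathcal B_F$ be the normalization of $\mathcal B$ in $F$. The pullback of the generic complete intersection to $\mathcal B_F$ is the generic complete intersection for the pulled-back linear system. This linear system is base-point free and its map is finite, so by Bertini's irreducibility theorem (Jouanolou) this pullback is geometrically irreducible of dimension one. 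Its function field is then $F\otimes_K k'(C)$, which is therefore a field. The degree count $[F:K]$ is automatic.

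For (2), I would compare model heights. Both heights are limits of Weil heights computed on models over the base, and I would work on the fixed model $\mathcal B_F$. For a point $P\in G(F)$ and a model line bundle, the geometric height is $[F:K]^{-1}(\mathcal H_F^{b-1}\cdot D_P)$, as in \eqref{eq:boundarydegree} for the boundary, and similarly for the intersection numbers on a projective model of $\overline G$ or $A$ over $\mathcal B_F$. Restricting to the curve $C_F=C\times_{\mathcal B}\mathcal B_F$ cut out by $b-1$ members of $|\mathcal H^{\otimes e}|$ gives $\deg(D_P|_{C_F})=e^{b-1}(\mathcal H_F^{b-1}\cdot D_P)$. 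This holds because a generic complete intersection meets each component of the fixed cycle transversally and avoids codimension-two loci, including the locus where the point's section is undefined. Part (1) identifies the degree normalizations, since $[F\otimes k'(C):k'(C)]=[F:K]$. Weil heights therefore scale by exactly $\lambda$. The relation passes to Tate limits because $[n]$ commutes with base change, which gives the three identities.

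For (3), I would argue with the exceptional locus. By \cite{Abr94}, $Z_V$ is the union of finitely many families of translates of positive-dimensional semi-abelian subvarieties $B$ contained in $V$, and it is determined algebraically by $V$. Its formation therefore commutes with extension of algebraically closed fields: the finitely many semi-abelian subvarieties $B$ of $G_{\overline K}$ remain all of them after base change, since semi-abelian subvarieties are rigid. The locus $\{y:y+B\subset V\}$ is defined over $\overline K$, and hence $Z_{V}$ commutes with base change. A $\overline K$-point outside $Z_V$ stays outside. I expect (1) to be the main obstacle, specifically checking the geometric irreducibility of the pulled-back generic curve uniformly in $F$. The transversality claim in (2) is routine by comparison.
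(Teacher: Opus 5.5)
Your proposal is correct and follows essentially the paper's route: Bertini irreducibility for the finite map from $\mathcal B_F$, applied to the generic complete intersection (you cite Jouanolou where the paper cites Poonen--Slavov, and in both it is the use of the generic rather than a general curve that makes the result uniform in $F$); the projection formula for pole divisors restricted to a curve avoiding codimension-two loci; and descent of semi-abelian subgroups together with descent of the containment $x+B\subset V$. Two harmless blemishes: your opening regularity heuristic does not by itself give (1), since $k'(C)$ contains $k(I)$ rather than being a subextension of it and the Bertini step is what does the work; and in (3) $G_{\overline K}$ usually has infinitely many semi-abelian subvarieties, so what matters is that each one over the larger field descends to $\overline K$, not Abramovich's finiteness.
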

\end{samepage}

\begin{proof}
    Let $F/K$ be any finite extension and let $\mathcal B_F\to\mathcal B$ be the normalization. The normalization morphism is finite, so every nonempty fiber has dimension zero. Thus \cite[Theorem~1.5]{PS22} applies: while the image has dimension at least two, its bound makes the locus of hyperplanes whose inverse images are not geometrically irreducible a proper subset of the dual projective space. Successive generic hyperplanes therefore have geometrically irreducible inverse images. Generic reducedness follows from Bertini in characteristic zero, giving a geometrically integral curve over the parameter field. Thus, after extending the parameter field to $k'$, the induced curve has function field
    \[
        F\otimes_K k'(C),\qquad
        [F\otimes_K k'(C):k'(C)]=[F:K].
    \]
    The good open subset of the parameter space may depend on $F$, but its generic point belongs to every such open subset. Consequently the single field $k'(C)$ has this property for every finite $F/K$. The same generic complete intersection avoids every fixed codimension-two subset of $\mathcal B_F$ defined over $k$; in particular, modifications needed to extend a given rational map do not change the induced curve.

    We record the height comparison, including its normalization. For a projective point $z$ over $F$, normalize one nonzero coordinate to $1$, and let $D_z$ be the common pole divisor of its coordinates on $\mathcal B_F$. Write $\mathcal H_F$ for the pullback of $\mathcal H$. The generic curve avoids the indeterminacy locus of the rational map defined by $z$, so restricting its coordinates and applying the projection formula gives
    \[
        h_C(z)=\frac{D_z\cdot(e\mathcal H_F)^{b-1}}{[F:K]}
        =\lambda h(z).
    \]
    The same calculation applies to a line bundle on a projective model: extend the point on the closure of its graph and restrict the pulled-back line bundle to the generic curve. It also applies to coefficient and Chow heights, while generic multidegrees are unchanged by extension of the field. These identities hold for every algebraic point and every multiple $[n]x$ with the same generic curve. Passing to the canonical limits gives
    \[
        \hh_{M_C}=\lambda\hh_M,\qquad
        \hh_{N_C}=\lambda\hh_N,\qquad
        \hh_{L_C}=\lambda\hh_L.
    \]

    Original points outside $Z_V$ remain outside the Mordell exceptional locus after this extension. Indeed, any positive-dimensional semi-abelian subgroup over the enlarged algebraically closed field descends to $\overline K$ by \cite[Theorem~2.3.12]{Liu2024chowtrace1motiveslangneron}. If a coset of such a subgroup contains $x\in V(\overline K)$, it is $x+B$ after base change, for a subgroup $B$ over $\overline K$. Its containment in the extended variety $V$ descends faithfully flatly, so $x+B\subset V$ and $x\in Z_V$.
\end{proof}

\begin{proof}[Completion of the proof of Theorem~\ref{thm:vojta}]
    Both expressions in Theorem~\ref{thm:vojta}\textup{(2)} are unchanged by the displayed scaling, by their respective linear and quadratic homogeneities. Height ratios are unchanged as well. If $c'_1,c'_2,c'_3$ are the constants over the curve, take $c_1=c'_1$, $c_2=c'_2$, and $c_3=c'_3/\lambda$. This proves the theorem for the original polarization. The trace and bounded-height arguments of Sections~\ref{sec:heights} and~\ref{sec:bounded} continue to use the original extension $K/k$.
\end{proof}

\section{Proof of the main theorem}\label{sec:large}

After passing to the stabilizer quotient, Theorem~\ref{thm:vojta} bounds heights outside a proper closed subset. Proposition~\ref{prop:bounded} will then give the main theorem.

\subsection{Bounded height outside the cosets}

Finite rank makes the quotients by the kernels of the seminorms $\hh_M$ and $\sqrt{\hh_N\circ\pi}$ finite-dimensional. A finite covering of their unit balls supplies the nearby normalized points required by Theorem~\ref{thm:vojta}. The following is the geometric counterpart of \cite[\S5, Lemma~5.1(1) and p.~210]{Rem03}; see also \cite[Proposition~3.3]{Ge24} in the abelian arithmetic setting.

\begin{proposition}\label{prop:large}
    Let $V\subset G_{\overline{K}}$ be a closed subvariety, and let $\Lambda\subset G(\overline{K})$ be a subgroup of finite rank. There are $\eta>0$ and $R\geq0$ such that
    \[
        \hh_L(x)\leq R\qquad
        \bigl(x\in(V\setminus Z_V)(\overline{K})\cap(\Lambda+B_\eta(G))\bigr).
    \]
\end{proposition}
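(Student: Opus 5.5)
Assume $\dim V\geq1$; if $\dim V=0$ the set in question is finite, so its heights are bounded. The plan is to argue by contradiction from Theorem~\ref{thm:vojta}, with $m=\dim V+1$ and constants $c_1,c_2,c_3$. Write $\Lambda_\mathbb{R}=\Lambda\otimes_\bZ\bR$, and write the two seminorms there as $\hh_M$ and $q=\sqrt{\hh_N\circ\pi}$. Both are continuous, and the quotients $E_M=\Lambda_\bR/\ker\hh_M$ and $E_N=\Lambda_\bR/\ker q$ are finite-dimensional normed spaces.

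\textbf{Normalization.} Take a point $x$ in the set with $\hh_L(x)=H$ large, and choose $\lambda\in\Lambda$ with $\hh_L(x-\lambda)<\eta$.
\begin{enumerate}
\item Since $\rho(x-\lambda)\leq\eta+\sqrt\eta$, the triangle inequality gives $\hh_M(\lambda)\leq\hh_M(x)+\eta+\sqrt\eta$ and $q(\lambda)\leq q(x)+\eta+\sqrt\eta$.
\item Hence $\lambda/H$ has $\hh_M$-seminorm at most $1+o(1)$, and $\lambda/\sqrt H$ has $q$-seminorm at most $1+o(1)$, once $H\geq1$.
\item For the distance between normalized $x$ and normalized $\lambda$, extend the seminorms to the real span of $\Lambda$ and $x$. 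We get $\hh_M(x/H-\lambda/H)\leq(\eta+\sqrt\eta)/H$ and $q(\pi(x)/\sqrt H-\pi(\lambda)/\sqrt H)\leq(\eta+\sqrt\eta)/\sqrt H$. Both are small when $\eta\leq1$ and $H\geq1$.
\end{enumerate}
So each large-height point is attached to a point $(\lambda/H,\lambda/\sqrt H)$ in the bounded set $\mathcal K=\{\hh_M\leq2\}\times\{q\leq2\}\subset E_M\times E_N$.

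\textbf{Covering.} Cover $\mathcal K$ by finitely many, say $P$, cells of diameter at most $1/(3c_1)$ in $\hh_M$ and at most $1/(3c_1)$ in $q$. This uses compactness of bounded sets in finite-dimensional normed spaces. Take $\eta$ small enough that the errors in step~3 are below $1/(3c_1)$. If $x,y$ have normalized $\lambda$'s in the same cell, then
\[
\hh_M\Bigl(\frac{x}{\hh_L(x)}-\frac{y}{\hh_L(y)}\Bigr)\leq c_1^{-1},\qquad q\Bigl(\frac{\pi(x)}{\sqrt{\hh_L(x)}}-\frac{\pi(y)}{\sqrt{\hh_L(y)}}\Bigr)\leq c_1^{-1}.
\]
This follows from the triangle inequality in the real span generated by $\Lambda,x,y$. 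The $\lambda$-terms sit in $\Lambda_\bR$, where the seminorm only sees the class in $E_M$ or $E_N$. The second bound is condition~(2) of Theorem~\ref{thm:vojta} after squaring.

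\textbf{Pigeonhole.} Suppose the heights on the set are unbounded.
\begin{enumerate}
\item Choose points $x^{(1)},x^{(2)},\ldots$ in the set with $\hh_L(x^{(1)})\geq\max(c_3,1)$ and $\hh_L(x^{(j+1)})\geq c_2^2\hh_L(x^{(j)})$.
\item Take $(m-1)P+1$ of them. By pigeonhole, $m$ of them lie in one cell.
\item Listed in increasing order of height, these $m$ points still satisfy the growth condition~(1), since $c_2\geq1$ (which we may assume). Adjacent pairs satisfy~(2).
\end{enumerate}
This contradicts Theorem~\ref{thm:vojta}. Hence heights are bounded by some $R$, which depends on $P$ and the constants.

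\textbf{Main obstacle.} The delicate point is step~3 of the normalization and the covering. Normalized points live in different real spans as $x$ varies, so I must justify comparing them. I would pass to the span of $\Lambda$ together with the finitely many points involved. Continuity and homogeneity of $\hh_M$ and $q$ there (Section~\ref{sec:heights}) give the triangle inequality. The cell diameters only involve elements of $\Lambda_\bR$, so one fixed covering of $\mathcal K$ works uniformly. The scalings also differ: $H$ for $\hh_M$ versus $\sqrt H$ for $q$. That is exactly why I use the product covering $E_M\times E_N$ rather than a single $\rho$-ball.
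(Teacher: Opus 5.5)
Your proposal is correct and follows the paper's proof. Both attach each large-height point to a nearby element of $\Lambda$, take finite nets in the two finite-dimensional normed quotients by the kernels of $\hh_M$ and $\sqrt{\hh_N\circ\pi}$, and select $m$ points in one cell with the required height growth, contradicting Theorem~\ref{thm:vojta}. The only differences are cosmetic. You normalize $\lambda$ by $\hh_L(x)$ rather than by $\hh_L(\lambda)$, and you shrink $\eta$ instead of fixing $\eta=1$ with a large height threshold; this makes the comparison of $x$ with $\lambda$ immediate by homogeneity and spares you the paper's estimates for $|\hh_L(x)-\hh_L(\lambda)|$ and $\bigl|\sqrt{\hh_L(x)}-\sqrt{\hh_L(\lambda)}\bigr|$.
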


\begin{proof}
    If $V=\{v\}$, take $R=\hh_L(v)$ and $\eta=1$. Otherwise choose the constants in Theorem~\ref{thm:vojta} with $c_1\geq1$ and $c_2>1$, and again set $\eta=1$. We will obtain a finite partition in which pairs of sufficiently large points satisfy Theorem~\ref{thm:vojta}\textup{(2)}.

    Write $x=\lambda+z$, where $\lambda\in\Lambda$ and $\hh_L(z)<1$. The triangle inequalities for $\hh_M$ and $\sqrt{\hh_N\circ\pi}$ give
    \[
        |\hh_L(x)-\hh_L(\lambda)|\leq1+2\sqrt{\hh_L(\lambda)}.
    \]
    The function $\sqrt{\hh_M}$ is subadditive, as is $\sqrt{\hh_N\circ\pi}$. Applying the Euclidean triangle inequality to this pair of functions proves the triangle inequality for $\sqrt{\hh_L}$. In particular,
    \[
        \bigl|\sqrt{\hh_L(x)}-\sqrt{\hh_L(\lambda)}\bigr|\leq1.
    \]
    If $\hh_L(x)>1$, then $\hh_L(\lambda)>0$, and we may normalize by both heights. Homogeneity and the triangle inequalities give
    \begin{align*}
        \hh_M\left(\frac{x}{\hh_L(x)}-\frac{\lambda}{\hh_L(\lambda)}\right)
        &\leq\frac{1+|\hh_L(x)-\hh_L(\lambda)|}{\hh_L(x)}
        \leq\frac{4+2\sqrt{\hh_L(x)}}{\hh_L(x)},\\
        \sqrt{\hh_N\left(\frac{\pi(x)}{\sqrt{\hh_L(x)}}-\frac{\pi(\lambda)}{\sqrt{\hh_L(\lambda)}}\right)}
        &\leq\frac{1+|\sqrt{\hh_L(x)}-\sqrt{\hh_L(\lambda)}|}{\sqrt{\hh_L(x)}}
        \leq\frac{2}{\sqrt{\hh_L(x)}}.
    \end{align*}
    Both bounds tend to zero as $\hh_L(x)\to\infty$. Choose $R_0>\max\{1,c_3\}$ so that they are at most $1/(4c_1)$ for $\hh_L(x)\geq R_0$, independently of $x$ and its decomposition.

    Take the finite-dimensional normed quotients of $\Lambda\otimes_\bZ\bR$ by the kernels of $\hh_M$ and $\sqrt{\hh_N\circ\pi}$, respectively. The images of $\lambda/\hh_L(\lambda)$ in the first quotient and of $\lambda/\sqrt{\hh_L(\lambda)}$ in the second belong to the corresponding closed unit balls, since $\hh_M(\lambda)\leq\hh_L(\lambda)$ and $\hh_N(\pi(\lambda))\leq\hh_L(\lambda)$. Compactness gives a finite $1/(4c_1)$-net in each ball. For every $x$ with $\hh_L(x)\geq R_0$, fix a decomposition $x=\lambda+z$ and assign it to a pair of net centers within $1/(4c_1)$ of the normalized images of $\lambda$. The pairs of centers define a finite partition.

    If $x$ and $x'$ belong to the same part, the normalized subgroup images differ by at most $1/(2c_1)$ in each norm. Combining this with the preceding error bounds gives
    \begin{align*}
        \hh_M\left(\frac{x}{\hh_L(x)}-\frac{x'}{\hh_L(x')}\right)&\leq c_1^{-1},\\
        \hh_N\left(\frac{\pi(x)}{\sqrt{\hh_L(x)}}-\frac{\pi(x')}{\sqrt{\hh_L(x')}}\right)&\leq c_1^{-2}.
    \end{align*}
    If the heights in $(V\setminus Z_V)(\overline K)\cap(\Lambda+B_1(G))$ were unbounded, one part would have unbounded heights. Successively choose $m=\dim V+1$ points in that part with first height at least $R_0$ and successive height ratios at least $c_2^2$. They satisfy both conditions of Theorem~\ref{thm:vojta}, a contradiction. The asserted bound $R$ follows.
\end{proof}

\subsection{The stabilizer quotient}\label{sec:mainproof}

\begin{proof}[Proof of Theorem~\ref{thm:main}]
    Assume~(1). Let $S=\Stab(X)^0$ and $q:G_{\overline{K}}\to\widetilde G=G_{\overline K}/S$ be the stabilizer quotient. Choose a standard compactification of $\widetilde G$ and a line bundle $\widetilde L=\widetilde M+\overline{\widetilde\pi}^{\,*}\widetilde N$, where $\widetilde M$ is its boundary line bundle, $\widetilde N$ is symmetric and ample on its abelian quotient, and $\overline{\widetilde\pi}$ extends the projection from $\widetilde G$ to its abelian quotient. Define $B_\epsilon(\widetilde G)$ using $\hh_{\widetilde L}$. Height comparison for $q$ shows that $q(X)(\overline{K})\cap(q(\Gamma)+B_\epsilon(\widetilde G))$ is dense in $q(X)$ for every $\epsilon>0$.

    The group $q(\Gamma)$ has finite rank and satisfies $[n]^{-1}q(\Gamma)=q(\Gamma)$ for every $n\geq1$, with inverse images taken in $\widetilde G(\overline K)$. Indeed, if $[n]b=q(\gamma)$ for $\gamma\in\Gamma$, lift $b$ to $x\in G(\overline{K})$. Multiplication by $n$ is surjective on $S$, so some $s\in S(\overline{K})$ satisfies $[n]s=[n]x-\gamma$. Then $[n](x-s)=\gamma$, whence $x-s\in\Gamma$ and $b\in q(\Gamma)$. Applying $q$ to the definition of $\Gamma$ gives $q(\Gamma)\subset q(\Gamma_0)^{\mathrm{div}}$, and the inverse-image identities give the reverse inclusion.

    The quotient data descend to a finite extension of $K$, preserving normalized heights. Since $q^{-1}(q(X))=X$, the image $q(X)$ is closed and has finite stabilizer $\Stab(X)/\Stab(X)^0$. Replace $G,X,\Gamma_0,\Gamma$ by their images and $M,N,L$ by the quotient line bundles $\widetilde M,\widetilde N,\widetilde L$, and use this height to define $\Gamma_\epsilon$. It is now enough to prove~(2) when $X$ has finite stabilizer.

    Abramovich's theorem makes $Z_X$ a proper closed subset when $\dim X>0$; for a point, $Z_X$ is empty. Proposition~\ref{prop:large} gives $\eta>0$ and $R\geq0$ such that every point of $(X\setminus Z_X)(\overline{K})\cap\Gamma_\eta$ has height at most $R$. For $0<\epsilon\leq\eta$, the set $(X\setminus Z_X)(\overline{K})\cap\Gamma_\epsilon$ remains dense and has height at most $R$. The same density of points of height at most $R$ holds for $\epsilon>\eta$ by inclusion. Proposition~\ref{prop:bounded} gives~(2) for the quotient, which is exactly \eqref{eq:special} for the original variety.

    Conversely, assume~(2) and choose $\gamma$ in \eqref{eq:special}. The connected stabilizer of $X-\gamma$ is again $S$, and its quotient is the image of a constant subvariety under a homomorphism with finite kernel. Hence $X-\gamma$ is special in the sense of \cite[Theorem~1.1]{LY2025Bogomolov}, with torsion translation zero. That theorem gives a dense set of points of arbitrarily small height on $X-\gamma$. Translation by $\gamma\in\Gamma$ proves~(1).
\end{proof}

\begin{proof}[Proof of Corollary~\ref{cor:exceptional}]
    Induct on $\dim X$. If $X$ is $\Gamma$-special, take $Z_1=X$. Otherwise Theorem~\ref{thm:main} gives $\epsilon>0$ such that the closure of $X(\overline{K})\cap\Gamma_\epsilon$ is proper. An empty closure requires no exceptional subvarieties; this includes the dimension-zero case. Otherwise apply induction to its finitely many irreducible components and take the minimum of $\epsilon$ and their positive thresholds. Every point in the resulting neighborhood lies in a component and hence in one of its exceptional subvarieties.
\end{proof}

\subsection{Positive characteristic}\label{sec:charp}

In this subsection, let $k$ have characteristic $p>0$, with the other field and height conventions unchanged. Stabilizers are given their reduced scheme structure, as in \cite[\S1.1]{LY2025Bogomolov}. The implication from $\Gamma$-specialness to the density of $X(\overline{K})\cap\Gamma_\epsilon$ for every $\epsilon>0$ still follows from \cite[Theorem~1.1]{LY2025Bogomolov}. When $\Gamma_0$ has rank zero, this theorem also gives the converse: $\Gamma=\Gamma_0^{\mathrm{div}}$ is the torsion subgroup, and adding torsion preserves canonical heights.

Lemma~\ref{lem:gap} remains valid with the $F/k$-trace, since the Poincar\'e model and integral boundary degrees do not require separability. Lemma~\ref{lem:rational} and Corollary~\ref{cor:point} then apply unchanged. Together with geometric Bogomolov for a point, they prove Theorem~\ref{thm:main} when $\dim X=0$. The constant term in a point of height zero is expressed through the trace map, without identifying its image with a constant subgroup.

For a split torus, Proposition~\ref{prop:bounded} also holds in positive characteristic. The torus is constant, the incidence estimates hold in every characteristic, and generic finiteness suffices for the argument using a big line bundle in Lemma~\ref{lem:inverse}. Homomorphisms of split tori are given by integer matrices on their character groups, so subtori and homomorphisms between them are defined over $k$. The arguments for descent and for the translating point therefore do not require the characteristic-zero identification of a general semi-abelian trace image.

Proposition~\ref{prop:large} and Theorem~\ref{thm:vojta} fail in positive characteristic, as the classical Frobenius solutions of $u+v=1$ show; see \cite[p.~1046, (1.3)--(1.4)]{DM12}.

\begin{example}\label{ex:frobenius}
    Let $K=k(t)$ and $G=\Gm^2$. On $\overline G=(\mathbb P^1)^2$, take $L=M=\mathcal O(2,2)$ and $N=0$ on the trivial abelian quotient. Thus $\hh_L(u,v)=2h_{\mathcal O(1)}(u)+2h_{\mathcal O(1)}(v)$, with the heights computed over $\mathbb P^1_k$ using $\mathcal O(1)$ as the base polarization. Set
    \[
        X=\{(u,v):u+v=1\}\subset G,\qquad P=(t,1-t),
        \qquad\Gamma=\langle P\rangle^{\mathrm{div}}.
    \]
    A one-dimensional torus coset contained in $X$ would equal $X$. Its coordinates would have the form $az^r,bz^s$ for $a,b\in\overline{K}^*$ and integers $r,s$ not both zero, whose sum cannot be identically $1$. Hence $Z_X$ is empty and the stabilizer is finite. Frobenius gives
    \[
        [p^n]P=(t^{p^n},1-t^{p^n})\in X\cap\Gamma,
        \qquad \hh_L([p^n]P)=p^n\hh_L(P)\longrightarrow\infty.
    \]
    This disproves the positive-characteristic analogue of Proposition~\ref{prop:large}. It also disproves that of Theorem~\ref{thm:vojta}, since both expressions in Theorem~\ref{thm:vojta}\textup{(2)} vanish for any two of these points. The curve is constant and therefore $\Gamma$-special, so it does not contradict the specialness conclusion.
\end{example}

The forward implication of Theorem~\ref{thm:main} for general semi-abelian varieties in positive characteristic is not established by these arguments.


\begin{thebibliography}{CGHX21}
\bibitem[Abr94]{Abr94}
D.~Abramovich.
\newblock Subvarieties of semi-abelian varieties.
\newblock {\em Compositio Math.}, 90(1):37--52, 1994.

\bibitem[BBP18]{BBP18}
F.~Benoist, E.~Bouscaren, and A.~Pillay.
\newblock On function field Mordell--Lang: the semi-abelian case and the socle theorem.
\newblock {\em Proc. Lond. Math. Soc. (3)}, 116(1):182--208, 2018.

\bibitem[CGHX21]{CGHX21}
S.~Cantat, Z.~Gao, P.~Habegger, and J.~Xie.
\newblock The geometric Bogomolov conjecture.
\newblock {\em Duke Math. J.}, 170(2):247--277, 2021.

\bibitem[Con06]{Con06}
B.~Conrad.
\newblock Chow's $K/k$-image and $K/k$-trace, and the Lang--N\'eron theorem.
\newblock {\em Enseign. Math. (2)}, 52(1--2):37--108, 2006.

\bibitem[DM12]{DM12}
H.~Derksen and D.~Masser.
\newblock Linear equations over multiplicative groups, recurrences, and mixing I.
\newblock {\em Proc. Lond. Math. Soc. (3)}, 104(5):1045--1083, 2012.

\bibitem[Dil20]{Dil20}
G.~A.~Dill.
\newblock Generalized Vojta--R\'emond inequality.
\newblock {\em Int. J. Number Theory}, 16(1):107--120, 2020.

\bibitem[Fal83]{Fal83}
G.~Faltings.
\newblock Endlichkeitss\"atze f\"ur abelsche Variet\"aten \"uber Zahlk\"orpern.
\newblock {\em Invent. Math.}, 73(3):349--366, 1983.
\newblock Erratum: {\em Invent. Math.}, 75(2):381, 1984.

\bibitem[Fal91]{Fal91}
G.~Faltings.
\newblock Diophantine approximation on abelian varieties.
\newblock {\em Ann. of Math. (2)}, 133(3):549--576, 1991.

\bibitem[Fal94]{Fal94}
G.~Faltings.
\newblock The general case of S.~Lang's conjecture.
\newblock In {\em Barsotti Symposium in Algebraic Geometry} (Abano Terme, 1991), Perspectives in Mathematics, vol.~15, pp.~175--182. Academic Press, San Diego, CA, 1994.

\bibitem[Fuk10]{Fuk10}
L.~Fukshansky.
\newblock Algebraic points of small height missing a union of varieties.
\newblock {\em J. Number Theory}, 130(10):2099--2118, 2010.

\bibitem[Ge24]{Ge24}
T.~Ge.
\newblock Uniform Mordell--Lang plus Bogomolov.
\newblock {\em Int. Math. Res. Not. IMRN}, 2024(9):7360--7378, 2024.

\bibitem[GH19]{GH19}
Z.~Gao and P.~Habegger.
\newblock Heights in families of abelian varieties and the Geometric Bogomolov Conjecture.
\newblock {\em Ann. of Math. (2)}, 189(2):527--604, 2019.

\bibitem[Gra65]{Gra65}
H.~Grauert.
\newblock Mordells Vermutung \"uber rationale Punkte auf algebraischen Kurven und Funktionenk\"orper.
\newblock {\em Publ. Math. Inst. Hautes \'Etudes Sci.}, 25:131--149, 1965.

\bibitem[Gub07]{Gub07}
W.~Gubler.
\newblock The Bogomolov conjecture for totally degenerate abelian varieties.
\newblock {\em Invent. Math.}, 169(2):377--400, 2007.

\bibitem[HLY26]{HLY26}
Z.~Han, W.~Luo, and J.~Yu.
\newblock Uniform Mordell--Lang conjecture for semi-abelian varieties.
\newblock {\em Preprint}, \href{https://arxiv.org/abs/2609.17233}{arXiv:2609.17233}, 2026.

\bibitem[Hru96]{Hru96}
E.~Hrushovski.
\newblock The Mordell--Lang conjecture for function fields.
\newblock {\em J. Amer. Math. Soc.}, 9(3):667--690, 1996.

\bibitem[Hul26]{Hul26}
N.~Hultberg.
\newblock New gap principle for semi-abelian varieties using globally valued fields.
\newblock {\em Preprint}, \href{https://arxiv.org/abs/2601.04972}{arXiv:2601.04972}, 2026.

\bibitem[Lan60]{Lan60}
S.~Lang.
\newblock Integral points on curves.
\newblock {\em Publ. Math. Inst. Hautes \'Etudes Sci.}, 6:27--43, 1960.

\bibitem[Laz04]{Laz04}
R.~Lazarsfeld.
\newblock Positivity in algebraic geometry I.
\newblock Ergebnisse der Mathematik und ihrer Grenzgebiete, vol.~48. Springer-Verlag, Berlin, 2004.

\bibitem[Liu24]{Liu2024chowtrace1motiveslangneron}
Long Liu.
\newblock {Chow trace of 1-motives and the Lang-N\'eron groups}.
\newblock {\em Preprint}, \href{https://arxiv.org/abs/2404.13200}{arXiv:2404.13200}, 2024.
\newblock Accepted by {\em Algebra \& Number Theory}.

\bibitem[Liu26]{Liu2026pushforward}
Long Liu.
\newblock {Representability of smooth-site direct images and Chow trace of semi-abelian varieties}.
\newblock {\em In preparation}.

\bibitem[LY25]{LY2025Bogomolov}
Wenbin Luo and Jiawei Yu.
\newblock {Geometric Bogomolov conjecture for semiabelian varieties}.
\newblock {\em Preprint}, \href{https://arxiv.org/abs/2505.07193}{arXiv:2505.07193}, 2025.

\bibitem[Man63]{Man63}
Yu.~I.~Manin.
\newblock Rational points on algebraic curves over function fields.
\newblock {\em Izv. Akad. Nauk SSSR Ser. Mat.}, 27(6):1395--1440, 1963.
\newblock Correction: {\em Letter to the Editor}, {\em Math. USSR-Izv.}, 34(2):465--466, 1990.

\bibitem[McQ95]{McQ95}
M.~McQuillan.
\newblock Division points on semi-abelian varieties.
\newblock {\em Invent. Math.}, 120(1):143--159, 1995.

\bibitem[Mor01]{Mor01}
A.~Moriwaki.
\newblock A generalization of conjectures of Bogomolov and Lang over finitely generated fields.
\newblock {\em Duke Math. J.}, 107(1):85--102, 2001.

\bibitem[Poo99]{Poo99}
B.~Poonen.
\newblock Mordell--Lang plus Bogomolov.
\newblock {\em Invent. Math.}, 137(2):413--425, 1999.

\bibitem[PS22]{PS22}
B.~Poonen and K.~Slavov.
\newblock The exceptional locus in the Bertini irreducibility theorem for a morphism.
\newblock {\em Int. Math. Res. Not. IMRN}, 2022(6):4503--4513, 2022.

\bibitem[Rem00]{Rem00}
G.~R\'emond.
\newblock In\'egalit\'e de Vojta en dimension sup\'erieure.
\newblock {\em Ann. Scuola Norm. Sup. Pisa Cl. Sci. (4)}, 29(1):101--151, 2000.

\bibitem[Rem01]{Rem01}
G.~R\'emond.
\newblock Sur le th\'eor\`eme du produit.
\newblock {\em J. Th\'eor. Nombres Bordeaux}, 13(1):287--302, 2001.

\bibitem[Rem03]{Rem03}
G.~R\'emond.
\newblock Approximation diophantienne sur les vari\'et\'es semi-ab\'eliennes.
\newblock {\em Ann. Sci. \'Ecole Norm. Sup. (4)}, 36(2):191--212, 2003.

\bibitem[Rem05]{Rem05}
G.~R\'emond.
\newblock In\'egalit\'e de Vojta g\'en\'eralis\'ee.
\newblock {\em Bull. Soc. Math. France}, 133(4):459--495, 2005.

\bibitem[Sil11]{Sil11}
J.~H.~Silverman.
\newblock Height estimates for equidimensional dominant rational maps.
\newblock {\em J. Ramanujan Math. Soc.}, 26(2):145--163, 2011.

\bibitem[Ull98]{Ull98}
E.~Ullmo.
\newblock Positivit\'e et discr\'etion des points alg\'ebriques des courbes.
\newblock {\em Ann. of Math. (2)}, 147(1):167--179, 1998.

\bibitem[Voj96]{Voj96}
P.~Vojta.
\newblock Integral points on subvarieties of semi-abelian varieties, I.
\newblock {\em Invent. Math.}, 126(1):133--181, 1996.

\bibitem[XY22]{XY22}
J.~Xie and X.~Yuan.
\newblock Geometric Bogomolov conjecture in arbitrary characteristics.
\newblock {\em Invent. Math.}, 229(2):607--637, 2022.

\bibitem[Yam18]{Yam18}
K.~Yamaki.
\newblock Trace of abelian varieties over function fields and the geometric Bogomolov conjecture.
\newblock {\em J. Reine Angew. Math.}, 741:133--159, 2018.

\bibitem[Yua25]{Yua25}
X.~Yuan.
\newblock On Vojta's proof of the Mordell conjecture.
\newblock {\em Preprint}, \href{https://arxiv.org/abs/2508.11888v2}{arXiv:2508.11888}, 2025.

\bibitem[Zha98]{Zha98}
S.-W.~Zhang.
\newblock Equidistribution of small points on abelian varieties.
\newblock {\em Ann. of Math. (2)}, 147(1):159--165, 1998.

\bibitem[Zha00]{Zha00}
S.-W.~Zhang.
\newblock Distribution of almost division points.
\newblock {\em Duke Math. J.}, 103(1):39--46, 2000.
\end{thebibliography}
\end{document}